\newtheorem{theorem}{Theorem}[section]
\newtheorem{proposition}[theorem]{Proposition}
\newtheorem{corollary}[theorem]{Corollary}
\newtheorem{lemma}[theorem]{Lemma}
\newtheorem{example}[theorem]{Example}
\newtheorem{definition}[theorem]{Definition}
\newtheorem{fact}[theorem]{Fact}
\newtheorem{remark}[theorem]{Remark}
\newtheorem*{prob}{Main Problem}
\newtheorem*{mtheo}{Main Theorem}
\newtheorem*{theo}{Theorem}
\newcommand{\Ab}{{\rm Ab}}
\newcommand{\pp}{{\rm pp}}
\newcommand{\Th}{{\rm Th}}
\newcommand{\grO}{\Omega}
\newcommand{\gro}{\omega}
\newcommand{\grf}{\varphi}
\newcommand{\rbar}{\overline{r}}
\newcommand{\ba}{{\bf a}}
\newcommand{\bb}{{\bf b}}
\newcommand{\bm}{ {\bf m}}
\newcommand{\bv}{ {\bf v}}
\newcommand{\bu}{ {\bf u}}
\newcommand{\bx}{{\bf x}}
\newcommand{\bbN}{\mathbb{bN}}
\newcommand{\bbZ}{\mathbb{bZ}}
\begin{document}

\footskip30pt

\date{}

\title{A countable universal torsion abelian group for purity}
 %and $\aleph_0$-strongly homogeneous $p$-groups
\author{Ivo Herzog}
\email{herzog.23@osu.edu}
\urladdr{https://u.osu.edu/herzog.23/}
\address{Department of Mathematics, The Ohio State University, Lima, Ohio, USA}

\author{Marcos Mazari-Armida}
\email{Marcos.MazariA@colorado.edu}
\urladdr{https://math.colorado.edu/~mama9382/}
\address{Department of Mathematics, University of Colorado Boulder, Boulder, Colorado, USA}

\subjclass[1991]{}
{\let\thefootnote\relax\footnote{{AMS 2020 Subject Classification: Primary: 20K30. Secondary: 03C45, 03C48, 03C60, 13L05.
Key words and phrases. Abelian groups, $p$-groups, purely universal models, $\aleph_0$-strongly homogeneous models.\\
The second author's research was partially supported by an AMS-Simons Travel Grant 2022-2024. }}}  
%{\let\thefootnote\relax\footnote{{The second author's research was partially supported by an AMS-Simons Travel Grant 2022-2024.}}  

\begin{abstract}
 We show that there is a countable universal abelian $p$-group for purity, i.e., a countable abelian $p$-group $U$ such that every countable abelian $p$-group purely embeds in $U$. This is the last result needed to provide a complete solution to Problem 5.1 of \cite{fuchs} below $\aleph_\omega$. We introduce $\aleph_0$-strongly homogeneous $p$-groups, show that there is a universal abelian $p$-group for purity which is $\aleph_0$-strongly homogeneous, and completely characterize the countable $\aleph_0$-strongly homogeneous $p$-groups.

\end{abstract}

\maketitle

\section{Introduction}

The main goal of this paper is to address Problem 5.1 in page 181 of \cite{fuchs} in the case of countable abelian $p$-groups.  The problem stated by Fuchs is the following:

\begin{prob}\label{mprob}
For which cardinals $\lambda$ is there a universal abelian $p$-group for purity? We mean an abelian $p$-group $U_\lambda$ of cardinality $\lambda$ such that every abelian $p$-group of cardinality $\leq \lambda$ embeds in $U_\lambda$ as a pure subgroup. The same question for torsion-free abelian groups.
\end{prob}

We provide a positive answer in the case $\lambda = \aleph_0$ for $p$-groups.\footnote{This also answers Question 3.11 of \cite{m4}.} More precisely we obtain the following result. Recall that $H$ is a pure subgroup $G$ if $n G \cap H = n H$ for every $n \in \mathbb{N}$.

\begin{mtheo}
$\bigoplus_{n =1}^\infty \bbZ (p^n)^{(\aleph_0)} \oplus \bbZ (p^\infty)^{(\aleph_0)}$ is a countable universal abelian $p$-group for purity.
\end{mtheo}

The above result is the last result needed to provide a complete solution to Fuchs' problem below $\aleph_\omega$. For \emph{$p$-groups}, it is shown in \cite{m4}, that the existence of a universal abelian $p$-group for purity of cardinality $\aleph_1$ is independent of the standard axioms of set theory (ZFC) and that the existence of a universal abelian $p$-group for purity of cardinality $\aleph_n$ depends on the value of the continuum for $n \geq 2$. For \emph{torsion-free abelian groups}, it is shown in  \cite{kojsh}, that there is not a universal torsion-free abelian group for purity of cardinality $\aleph_0$ and that the existence of a universal torsion-free abelian group for purity of cardinality $\aleph_n$ depends on the value of the continuum for $n \geq 1$. There are some partial results for cardinals greater than or equal to $\aleph_\omega$  for $p$-groups in \cite{m4} and significant results for torsion-free groups in \cite{kojsh}, \cite{sh3}, \cite{sh820}, \cite{kuma}, \cite{sh1151}. On page 302 of \cite{sh1151} a detailed breakdown for cardinals greater than $\aleph_\omega$ is presented.

We give two independent arguments for the existence of a countable $p$-group for purity. The first argument, which is presented in Section 3, uses only standard model theoretic tools. The key result is that the number of $p$-torsion pp-types of any finite length in the theory of abelian groups is countable (Theorem \ref{T:p-small}). Once that is shown, we take the \emph{biggest} complete theory of abelian $p$-groups and construct a countable $\aleph_0$-saturated model for $p$-torsion pp-types which is shown to be a universal $p$-group for purity.

The second argument, which is presented in Section 4, uses some deep results of the model theory of modules. The advantage of this argument is that we explicitly describe a countable universal $p$-group for purity and show that for every countable $p$-group there is a pure essential monomorphism into a direct summand of the mentioned universal group. The key result is that the pure injective hull of a finite subgroup of a $p$-group is the direct sum of a finite number of indecomposable pure injective $p$-groups (Theorem \ref{T:main}). Once that is shown, we build a pure essential monomorphism from any countable $p$-group into a direct summand of $\bigoplus_{n =1}^\infty \bbZ (p^n)^{(\aleph_0)} \oplus \bbZ (p^\infty)^{(\aleph_0)}$ by decomposing  the $p$-group into finite subgroups and using the universal property of the pure injective hull. 

Then we turn to study the classical model theoretic notion of an $\aleph_0$-strongly homogeneous model\footnote{Also called $\aleph_0$-sequentially strongly homogeneous model in the model theory literature.} for $p$-groups, i.e., a $p$-group such that every partial pure isomorphism (see Definition \ref{D: partial}) of finite subgroups extends to an automorphism of the group. We show that direct sums of pure injective indecomposable $p$-groups are $\aleph_0$-strongly homogeneous, so in particular the universal model  $\bigoplus_{n =1}^\infty \bbZ (p^n)^{(\aleph_0)} \oplus \bbZ (p^\infty)^{(\aleph_0)}$ is $\aleph_0$-strongly homogeneous. Moreover, we get a complete characterization of the countable $\aleph_0$-strongly homogeneous $p$-groups.

\begin{theo} Assume $M$ is a countable $p$-group. The following are equivalent.
\begin{enumerate}
\item $M$ is $\aleph_0$-strongly homogeneous.
\item One of the following holds.
\begin{enumerate}
\item $M$ is isomorphic to $ \bigoplus_{n =1}^\infty \bbZ (p^n)^{(\alpha_n)} \oplus \bbZ (p^\infty)^{(\alpha_\infty)}$ such that $\alpha_n$ is countable (possibly finite) for every $n \in \mathbb{Z}_{> 0} \cup \{\infty\}$. 
\item There is an $n \geq 1$ such that $p^{\omega + n} M =0$ and $p^\omega M[p] = p^{\omega + (n -1)} M[p] \neq 0$ where $p^{\omega  + i}M[p]$ is the subgroup of elements annihilated by $p$ in $p^{\omega  + i}M$ for every $i \in \mathbb{N}$. 
\end{enumerate}
\end{enumerate}
\end{theo}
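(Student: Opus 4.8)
The plan is to prove the two implications separately, and for $(2)\Rightarrow(1)$ to treat the alternatives (a) and (b) in turn. Alternative (a) is immediate: each $\bbZ(p^n)$ ($n\ge 1$) and $\bbZ(p^\infty)$ is an indecomposable pure injective $p$-group, so a group as in (a) is a direct sum of indecomposable pure injective $p$-groups, hence $\aleph_0$-strongly homogeneous by the result recalled in the Introduction.

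For $(2)(b)\Rightarrow(1)$ I would first unwind the hypotheses. The conditions $p^{\omega+n}M=0$ and $p^\omega M[p]=p^{\omega+(n-1)}M[p]\ne 0$ say that $G:=p^\omega M$ is a bounded $p$-group with $p^nG=0$, hence $G\cong\bigoplus_{k=1}^n\bbZ(p^k)^{(\beta_k)}$ by Pr\"ufer's theorem, and the displayed socle condition forces $\beta_k=0$ for $k<n$; thus $p^\omega M\cong\bbZ(p^n)^{(\kappa)}$ for a nonzero countable cardinal $\kappa$. In particular $M$ is reduced; and since $p^\omega(M/p^\omega M)=0$ always holds, $M/p^\omega M$ is a countable separable $p$-group, hence a direct sum of finite cyclic groups by Pr\"ufer's theorem. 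With this structure I would run a back-and-forth: given a partial pure isomorphism $f\colon A\to B$ of finite subgroups and $c\in M$, I would build $d\in M$ so that $f\cup\{(c,d)\}$ extends to a partial pure isomorphism $\langle A,c\rangle\to\langle B,d\rangle$, and then assemble the successive extensions into an automorphism of $M$ along an enumeration (Kaplansky). The element $d$ would be produced by: noting that membership in $p^\omega M$ is detected by the pp-conditions ``$p^k\mid x$'' ($k\in\mathbb N$), all preserved by $f$, so $f$ maps $A\cap p^\omega M$ onto $B\cap p^\omega M$; realizing the image of the required type in the quotient $M/p^\omega M$, a direct sum of cyclics and hence $\aleph_0$-strongly homogeneous; lifting, and then correcting by an element of the homogeneous group $p^\omega M\cong\bbZ(p^n)^{(\kappa)}$, whose exponent $p^n$ bounds the length of any jump of the height sequence of $c$ past $\omega$, so the bounded transfinite data carried by $c$ can be matched over $B$. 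That only countably many $p$-torsion pp-types occur (Theorem \ref{T:p-small}) keeps the bookkeeping under control.

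For $(1)\Rightarrow(2)$ the key observation is that the pp-type in $M$ of a nonzero element $s$ with $ps=0$ records exactly $\min(h_M(s),\omega)$: a one-variable pp-formula over $\bbZ$ is (up to equivalence) a conjunction of divisibility conditions $p^k\mid x$ and torsion conditions $p^mx=0$, and for $s$ in the socle the latter hold trivially while the former only detect membership in $p^kM$ for finite $k$. Hence any two nonzero elements of $p^\omega M[p]$ have the same pp-type, so the map interchanging them is a partial pure isomorphism (Definition \ref{D: partial}); by $\aleph_0$-strong homogeneity it extends to an automorphism of $M$, and as automorphisms of $M$ preserve every term $p^\alpha M$ of the transfinite chain, the height function $h_M$ is constant, say equal to $\beta$, on $p^\omega M[p]\setminus\{0\}$. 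If that set is empty, or if $\beta$ is ``$\infty$'' (i.e.\ $p^\omega M[p]\subseteq\bigcap_\alpha p^\alpha M$, in which case a short argument shows $p^\omega M$ is divisible), then the extension $0\to p^\omega M\to M\to M/p^\omega M\to 0$ splits and $M$ is a countable direct sum of finite cyclic groups and copies of $\bbZ(p^\infty)$, i.e.\ case (a). Otherwise $\beta=\omega+j$ is an ordinal; then every nonzero element of $p^\omega M[p]$ lies in $p^{\omega+j}M$ but not in $p^{\omega+j+1}M$, so $p^{\omega+j+1}M[p]=0$ and hence $p^{\omega+j+1}M=0$, while $p^\omega M[p]=p^{\omega+j}M[p]\ne 0$; with $n:=j+1$ this is precisely case (b), and reducedness of $M$ is automatic since then $p^\omega M\cong\bbZ(p^n)^{(\kappa)}$.

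The step I expect to be the main obstacle is $(2)(b)\Rightarrow(1)$. The groups in case (b) are typically non-split extensions of a direct sum of cyclics by $\bbZ(p^n)^{(\kappa)}$, so they are not pure injective and are not covered by the results recalled in the Introduction; they are also not $\aleph_0$-saturated for $p$-torsion pp-types, so no cheap saturation argument is available. The back-and-forth must genuinely reconcile the ``below $\omega$'' part --- a direct sum of cyclics --- with the homogeneous bounded group $p^\omega M$, and in particular must keep track of the (bounded) way height sequences of elements jump past $\omega$, so as to realize inside $M$ itself the transported pp-type of $c$ over $A$; the homogeneity $p^\omega M\cong\bbZ(p^n)^{(\kappa)}$ is exactly what makes this possible, whereas it fails as soon as $p^\omega M$ has cyclic summands of two different orders.
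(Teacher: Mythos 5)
Your implication $(2)(a)\Rightarrow(1)$ coincides with the paper (Lemma \ref{L: countably homo}), and your $(1)\Rightarrow(2)$ is essentially sound and close in spirit to Proposition \ref{P: cnr} and the forward direction of Lemma \ref{L : red}: from constancy of the height function on $p^\omega M[p]\setminus\{0\}$ you recover the trichotomy. (Two small points there: you should justify why the constant value $\beta$ cannot be an ordinal $\ge\omega+\omega$ --- this is fillable, e.g.\ if $p^\omega M\ne p^{\omega\cdot 2}M$ one produces a socle element of $p^\omega M$ of height $\omega+h$ with $h$ finite by subtracting a deeper preimage, while $p^\omega M=p^{\omega\cdot2}M$ forces $p^\omega M$ divisible and hence $\beta=\infty$ --- and the ``$\beta=\infty$'' case is handled correctly but tersely.)

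The genuine gap is $(2)(b)\Rightarrow(1)$, which you yourself flag as the main obstacle: what you give is a strategy, not a proof. The single-element extension step of your back-and-forth --- ``realize the transported type in $M/p^\omega M$, lift, then correct by an element of $p^\omega M\cong\bbZ(p^n)^{(\kappa)}$'' --- is exactly where the difficulty sits, and nothing in your outline shows that a realization in the quotient can be lifted and corrected so as to match the full pp-type of $c$ over $A$ inside $M$: the quotient forgets precisely the transfinite height data and the relations between $c$, $A$ and $p^\omega M$, and it is not explained why a correcting element with the required relations to $f(A)$ exists. The paper resolves this differently: it first proves (Claim 2 in Lemma \ref{L : red}) that under condition (b) \emph{every} partial pure isomorphism between finite subgroups is automatically height-preserving --- an induction on the order of the element using Kaplansky's transitivity theorem for countable reduced $p$-groups (Ulm sequences), the ultrametric behaviour of heights, and the hypothesis $p^\omega M[p]=p^{\omega+(n-1)}M[p]$ --- and then invokes Kaplansky's extension theorem (Fact \ref{F: kapla}) that height-preserving finite partial isomorphisms of countable reduced $p$-groups extend to automorphisms. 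Some substitute for this Ulm-sequence/height-preservation machinery (or an explicit verification of your lift-and-correct step) is needed; the appeal to Theorem \ref{T:p-small} only controls the number of types and does not help realize them inside $M$, which is not $\aleph_0$-saturated for $p$-torsion pp-types in case (b).
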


For an uncountable cardinal $\lambda$, we show that there are $2^\lambda$ non-isomorphic   $\aleph_0$-strongly homogeneous $p$-groups of cardinality $\lambda$ by showing that $p$-groups without elements of infinite height are $\aleph_0$-strongly homogeneous and using \cite{sh74}. We finish the paper by showing how $\aleph_0$-strongly homogeneous $p$-groups relate to the classical notion of a transitive group of Kaplansky \cite[\S 18]{kapla}.

%The proof of the main theorem has three key components.  We first show that the pure injective envelope of a finite subgroup in a $p$-group is the direct sum of a finite number of indecomposable pure injective $p$-groups. Then we show that the number of $p$-torsion pp-types of any finite length is countable and use this to construct countable $\aleph_0$-saturated models for $p$-torsion pp-types.  Finally, we take the ''biggest'' complete theory of abelian $p$-groups, construct a countable $\aleph_0$-saturated model for $p$-torsion pp-types and show that this is universal and that it is precisely $\bigoplus_{n =1}^\infty \bbZ (p^n)^{(\aleph_0)} \oplus \bbZ (p^\infty)^{(\aleph_0)}$.

The paper is divided into five sections. Section 2 presents some preliminaries. Sections 3 and 4 have the model theoretic argument and algebraic argument of the existence of a countable $p$-group for purity respectively. Section 5 studies $\aleph_0$-strongly homogeneous $p$-groups.

We would like to thank John T. Baldwin, Samson Leung, Philipp Rothmaler and Wentao Yang for comments that helped improve the paper. We are grateful to the referee for many comments that significantly improved the presentation of the paper.

\section{Preliminaries} 

We introduce the key concepts of the model theory of modules and abelian group theory that are used in this paper. These are further studied in \cite{prest}, \cite{ziegler} and \cite{fuchs}.

\subsection{Positive primitive formulae} Since we will only study classes of abelian groups, the language will always be $L_\mathbb{Z} = \{0, +,-\} \cup \{ z\cdot   : z \in \mathbb{Z} \}$ where $z \cdot$ is interpreted as multiplication by $z$ for every $z \in \mathbb{Z}$.  We will write \emph{group} instead of \emph{abelian group}. The axioms for the class of abelian groups is expressible in $L_\mathbb{Z}.$ The theory of abelian groups is denoted by $\Th (\Ab).$ 

\begin{definition} A formula $\grf(\bv)$ is a \emph{positive primitive formula}, pp-formula for short,  provided it is equivalent, relative to $\Th (\Ab),$ to a formula of the form
\begin{equation} 
\grf(\bv )= \exists w_1,  \ldots, \exists w_l \; \bigwedge_{j=1}^{m} \; \left(
\sum_{i=1}^{n} z_{i,j} v_i + \sum_{k=1}^{l} s_{k,j}w_{k} \doteq 0 \right) ,
\end{equation}
where $z_{i,j}, s_{k,j} \in \mathbb{Z}$ for every $i \in \{ 1,..., n\}, j \in \{1,...,m\}, k\in \{1,...,l\}$ and \hspace{0.01cm} $\doteq$ is the formal symbol for equality. 
%\item Given $M$ a group, $A \subseteq M$ and $\bb \in M^{<
%\omega}$ we define the \emph{pp-type} of $\bb$ over $A$ in $M$ as \[pp(\bb/A, M)=
%\{\phi(\bv , \ba) : \phi(\bv, \bw ) \text{ is a
%pp-formula, } \ba \in A \text{ and }  M\models \phi[\bb,
%\ba] \}.\]
%\end{itemize}
\end{definition}

It follows from The Fundamental Theorem of Finitely Generated Modules over a PID, see for example \cite[2.$\mathbb{Z}$.1]{prest}, that every pp-formula in $L_\mathbb{Z}$ is equivalent, relative to $\Th (\Ab)$,  to one of the simplified form
\begin{eqnarray}
& \bigwedge_{j=1}^k \;   \left( s'_j \; \mid \; \sum_{i=1}^{n} z'_{i,j} v_j  \right) \label{Eq:conj}
\end{eqnarray}
where the abbreviation $s \mid v$ is used for the pp-formula $\exists w \; (sw \doteq v).$ This includes the possibility that $s = 0,$ which refers to the formula $0 \mid v$ which is equivalent to $0 \doteq v.$ We write $s \nmid v$ for $\neg (s \mid v)$.

\subsection{Subsets defined by pp-formulae} If $M$ is a group a pp-formula $\grf (\bv )$ defines in $M$ the subgroup 
\begin{equation}
\grf (M) \colon = \{ \; \ba \in  M^n \; \mid \; M \models \grf (\ba) \; \}
\end{equation}
of $M^n,$ where $n = \ell (\bv )$ is the length of the tuple of free variables. The definition $M \mapsto \grf (M)$ is natural (functorial) in the sense that if 
$f \colon M \to N$ is a homomorphism of groups, then so is $\grf (f) = f^n \! \mid_{\grf (M)} \colon \grf (M) \to \grf (N),$ the restriction to $\grf (M)$ of the diagonal map $f^n \colon M^n \to N^n$ induced by $f.$

We will mostly be interested in the subsets of $M$ defined by pp-formulae $\grf (v, \bb)$ in one free variable with parameters $\bb = (b_1, \ldots, b_n),$ $b_i \in M.$ If $\grf (v, \bb)$ is $M$-consistent, then 
$$\grf (M, \bb) := \{ \; a \in M \; \mid \; M \models \grf (a, \bb) \; \}$$
is a coset in $M$ of $\grf (M, {\bf 0}) \leq M,$ the subgroup defined in $M$ by the pp-formula $\grf (v, {\bf 0})$ without parameters.

\subsection{Partial homomorphisms} From now on, denote by $(M,B)$ an abelian group $M$ together with a distinguished subset $B$ contained in $M$.\footnote{For most of the paper one can assume that $B$ is a subgroup of $M$, this is the case in the algebraic argument of the main result of the paper.} A morphism $(f, f_0) \colon (M,B) \to (N,C)$ of \emph{groups with parameters} is just a morphism $f \colon M \to N$ of abelian groups such that $f(B) \subseteq C$ and $f_0 \colon B \to C$ is given by $f_0 = f \upharpoonright_B,$
$$\xymatrix{M \ar[r]^f & N \\
B \ar[r]^{f_0} \ar@{^{(}->}[u] & C. \ar@{^{(}->}[u]
}$$ 

The \emph{pp-type} of $(M,B)$ over $A$ a subset of $M$ is the collection of pp-formulae given by
$$\pp^+ (B/A,M) := \{ \; \grf (\bu_{\bb}, \ba ) \; \mid \; M \models \grf (\bb, \ba) \text{ and } \ba \in A \; \}.$$
The free variables of the pp-fomula $\grf (\bu_{\bb}, \ba)$ are indexed by the elements of $B.$ This is precisely the set of pp-formulae satisfied by $B$ in $M$ with parameters in $A$. For a tuple $\bb \in M^n$ we will write $\pp^+ (\bb /A,M)$. We will often consider the case where $A = \emptyset$ in which case we will simply write $\pp^+ (B,M)$.  
%$$\pp^+ (B/A,M) := \{ \; \phi (\bu_{\bb}, \ba ) \; \mid \; M \models \phi (\bb, \ba) \text{ and } \ba \in A \; \}.$$
%The free variables of the pp-fomula $\phi (\bu_{\bb}, \ba)$ are indexed by the elements of $B.$ We will often consider the case where $A = \emptyset$ in which we will simply write $\pp^+ (B,M)$.

If $(f,f_0) \colon (M,B) \to (N,C)$ is a homomorphism of groups with parameters, let
$$f (\pp^+ (B,M)) := \{ \grf (\bu_{f(\bb)}) \; \mid \; \grf (\bu_{\bb}) \in \pp^+ (B,M) \; \}$$
be the collection of pp-formulae obtained by replacing the variables $u_b$ with $u_{f(b)}$ in the pp-formulae of $\pp^+ (B,M).$ The discussion above implies that $f (\pp^+ (B,M)) \subseteq \pp^+ (C,N),$ which suggests the following definition, due to Ziegler~\cite[\S 3]{ziegler}.

\begin{definition} \label{D:phomo} 
A \emph{partial homomorphism} $f_0 \colon (M,B) \to (N,C)$ between groups with parameters is a function $f_0 \colon B \to C$ of the parameter subsets such that 
$f_0 (\pp^+ (B,M)) \subseteq \pp^+ (C,N).$ In other words, for every pp-formula $\grf (\bu_{\bb})$ with free variables indexed by $B,$ 
$$M \models \grf (\bb) \mbox{ implies } N \models \grf (f_0(\bb)).$$
\end{definition}

It is clear that $1_B \colon (M,B) \to (M,B)$ is a partial homomorphism and that a composition of partial homomorphisms is a partial homomorphism. Definition~\ref{D:phomo} is similar to that of Ziegler~\cite[\S 3]{ziegler} but we specify a codomain of parameters. It also differs from the definition~\cite[Definition 2.1]{cortes} of Cortes Izurdiaga et.\ al., who define a partial homomorphism to be the pair of maps given by the inclusion $B \leq M$ together with the composition $B \stackrel{f_0}{\to} C \leq N.$

\begin{definition}\label{D: partial}
A partial homomorphism $f_0 \colon (M,B) \to (N,C)$ is a \emph{partial pure monomorphism} if $$f_0 (\pp^+ (B, M)) = \pp^+ (f_0 (B), N),$$ 
that is, if for every pp-formula $\grf (\bu_{\bb})$ with free variables indexed by $B,$ $$M \models \grf (\bb) \mbox{ if and only if } N \models \grf (f_0(\bb)).$$

It is a \emph{partial pure isomorphism} if it is a partial pure monomorphism and $f_0[B] = C$.
\end{definition}

This notion is similar to Ziegler's ~\cite[\S 3]{ziegler} \emph{partial isomorphism.} It is obvious by considering the pp-formula $u_b \doteq 0$ that if $f_0 \colon (M,B) \to (N,C)$ is a partial pure monomorphism and $B, C$ are groups, then $f_0 \colon B \to C$ is a monomorphism of groups. The converse is not true in general. Neither is a partial pure monomorphism necessarily a monomorphism in the category of groups with parameters (and morphisms the partial homomorphisms).
%, but it \emph{would} be if the language of abelian groups is expanded to include pp-sorts and we worked in the category of pairs $(M^{\eq+}, B)$ of the associated extensions $M^{\eq+}$ to that language, together with distinguished definably closed subsets $B$ of parameters.

A homomorphism $f \colon M \to N$ of groups is a \emph{pure monomorphism} if the partial homomorphism \linebreak $f \colon (M,M) \to (N, N)$ is a partial pure monomorphism. If $M \leq N$ and the inclusion morphism is a pure monomorphism, we say that $M$ is a \emph{pure subgroup} of $N$ and we denote it by $M \leq_p N$. The Fundamental Theorem of Finitely Generated Modules over a PID may be applied again to show that this is equivalent to Pr\"{u}fer's original definition that for every $z \in \bbZ,$ $zM = zN \cap M.$

A morphism $f: M \to N$ is an elementary monomorphism, if for every first-order formula $\psi (\bx_{\bm})$ with free variables indexed by $M,$
$$M \models \psi (\bm) \mbox{ if and only if } N \models \psi (f(\bm)).$$
If $M \leq N$ and  the inclusion morphism is an elementary monomorphism, we say that $M$ is an \emph{elementary subgroup} of $N$ and we denote it by $M \preceq N.$ An elementary monomorphism $f \colon M \to N$ is necessarily a pure monomorphism, and the converse holds if and only if $M$ and $N$ are elementarily equivalent abelian groups \cite[2.25]{prest}.

%\begin{definition}
%The \emph{positive pp-type} of a partial monomorphism $f_0 \colon (M,B) \to (N,C)$ is the collection $\pp^+ (N,C)/(M,B)$ of pp-formulae of the form
%$$\phi (\bu_\bc) \wedge \bigwedge u_{f_0 (b)} \doteq b$$
%where $\phi (\bu_\bc) \in \pp^+ (N,C)$ and the conjunction runs over the free variables of $\phi (\bu_\bc)$ that are in the image of $f_0.$ Thus the formulae of $pp^+ (N,C)/(M,B)$ have free variables among those indexed by $C$ and parameters from $B.$
%If $B \leq C \leq N,$ then the positive pp-type of the partial monomorphism $(N,B) \to (N,C)$ given by the inclusion $B \leq C$ is denoted by $\pp^+ (N, C/B).$
%\end{definition}

\subsection{Torsion subgroups} If $M$ is a group,  the \emph{torsion subgroup of $M$} consists of the elements of finite order in $M$, we denote it by $t(M)$, and the \emph{$p$-primary component of $M,$} $p$ prime, consists of the elements whose order is a power of $p$ in $M$, we denote it by $t_p(M)$. $t(M)$ is a pure subgroup of $M$, for the general reason that the torsion-free quotient group $M/t(M)$ is flat. Furthermore, every torsion group $M$ may be decomposed as a direct sum $M = \bigoplus_p \; t_p(M),$ indexed by the primes $p$.
It follows that if $M$ is a general group and $p$ is a prime then the $p$-primary component $t_p(M)$ is a pure subgroup of $M$. An abelian group $M$ is a $p$\emph{-group} if $t_p(M) = M,$ that is, if every element has order a power of $p.$ The \emph{socle} of an abelian $p$-group consists of the subgroup of elements annihilated by $p$. 

\begin{proposition}
If $(M,A)$ is a group with a $p$-group of parameters $A,$ then $A \leq t_p(M),$ and \linebreak $1_A \colon (t_p(M), A) \to (M,A)$ is a partial pure isomorphism.
\end{proposition}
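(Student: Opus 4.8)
The plan is to verify the two assertions in turn. For the first, $A \leq t_p(M)$: since $A$ is a $p$-group, every element $a \in A$ has order a power of $p$ when computed in $A$, and because the inclusion $A \leq M$ is a group homomorphism, the order of $a$ in $M$ divides (in fact equals) its order in $A$. Hence every element of $A$ has $p$-power order in $M$, which is exactly the statement that $A \subseteq t_p(M)$. This uses nothing beyond the definition of $t_p$ and the fact that subgroup inclusions preserve orders.

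For the second assertion, I need to show that the identity map $1_A$, viewed as a partial homomorphism $(t_p(M),A) \to (M,A)$, is a partial pure isomorphism. Since $1_A$ is a bijection of $A$ onto itself, the ``isomorphism'' clause ($f_0[A] = A$) is immediate, so the content is that $1_A$ is a partial pure monomorphism: for every pp-formula $\grf(\bu_{\bar a})$ with free variables indexed by a tuple $\bar a$ from $A$, we have $t_p(M) \models \grf(\bar a)$ iff $M \models \grf(\bar a)$. Recall from the Preliminaries that $t_p(M)$ is a pure subgroup of $M$. The forward direction is immediate: pp-formulae (being positive existential) are preserved under homomorphisms, in particular under the inclusion $t_p(M) \hookrightarrow M$. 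For the reverse direction — if $M \models \grf(\bar a)$ then $t_p(M) \models \grf(\bar a)$ — I will invoke the general fact that if $H \leq_p N$ then $H$ reflects every pp-formula evaluated at a tuple from $H$; equivalently, $\grf(H, \bar a) = \grf(N,\bar a) \cap H^{\ell}$ for pp-formulae $\grf$ in the appropriate number of variables, which is the standard characterization of purity in the module-theoretic setting (cf.\ the discussion of Pr\"ufer's definition in the Preliminaries). Applying this with $H = t_p(M)$, $N = M$ gives exactly the reverse implication.

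The only point requiring genuine care is the reverse direction of purity reflection: a witness for $\grf(\bar a)$ in $M$ — i.e.\ a tuple $\bar w$ of elements of $M$ satisfying the quantifier-free matrix — need not itself lie in $t_p(M)$, so one cannot simply restrict. The resolution is that purity of $t_p(M)$ in $M$ is by definition (equivalently, by Pr\"ufer's criterion $z\,t_p(M) = z M \cap t_p(M)$) precisely the statement needed to solve the relevant linear systems back inside $t_p(M)$; more conceptually, $t_p(M)$ being a \emph{pure} subgroup is equivalent to the inclusion $t_p(M) \hookrightarrow M$ being a partial pure monomorphism on the full parameter set $(t_p(M), t_p(M))$, and restricting the parameter set to $A \leq t_p(M)$ preserves this property since $\pp^+(A, t_p(M)) \subseteq \pp^+(t_p(M), t_p(M))$ and likewise over $M$. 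So the proposition is essentially a matter of unwinding definitions once the purity of $t_p(M)$ in $M$ — already established in the Preliminaries — is in hand; I do not expect a substantive obstacle.
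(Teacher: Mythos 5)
Your proof is correct and follows essentially the same route as the paper: the whole content is that $t_p(M) \leq_p M$ (already established in the Preliminaries), so pp-formulae with parameters from $A \leq t_p(M)$ are satisfied in $t_p(M)$ if and only if they are satisfied in $M$, which is exactly the paper's one-line argument. Your extra care about witnesses lying outside $t_p(M)$ is resolved exactly as you say, since in this framework purity of the inclusion is (equivalent to) the reflection of pp-formulae at tuples from the subgroup.
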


\begin{proof}
Since $t_p(M) \leq_{p} M$ the pp-type of $A$ in $M$ is the same as that of $A$ in $t_p(M).$
\end{proof}

In a $p$-group, every pp-formula $s \mid v$ for $s \in \mathbb{Z}$ is equivalent to the pp-formula $p^k \mid v$ where $k \in \mathbb{N}$ such that $s = p^k m$ and $(p, m)=1$. 

\subsection{Universal models}

Let us recall the classical notion of a universal model for a class of abelian groups.

\begin{definition}
Let $\mathbf{K} = (K ,\leq_K)$ where $K$ is a class of abelian groups and $\leq_K$ is a partial order on $K$. Let $\lambda$ be a cardinal. $M \in K$ is a \emph{universal model} in $\mathbf{K}$ of cardinality $\lambda$ if $M$ has cardinality $\lambda$ and if for any $N \in K$ of cardinality $\lambda$ there is $f: N \to M$ a group monomorphism such that $f[N] \leq_K M$. 
\end{definition}

When $K$ is the class of $p$-groups, $\leq_K \; = \; \leq_p$ and $\lambda = \aleph_0$; we will say that \emph{$M$ is a countable universal $p$-group for purity} instead of \emph{$M$ is a universal model in the class of $p$-groups of cardinality $\aleph_0$ with pure embeddings.}

\section{Model theoretic argument}

In this section we present a direct proof of the existence of a countable  universal $p$-group for purity. The results of this section are slightly weaker than those of the next section, but have the advantage of only using standard model theoretic tools and very basic results of the model theory of modules.

The class of $p$-groups is not first-order axiomatizable in the language of abelian groups $L_\mathbb{Z}$. This can be shown using the Compactness Theorem by an analogous argument to the one given in \cite[2.1.12]{mar}.  For this reason we will introduce several versions of classical first-order notions, relativized to $p$-torsion groups.  We begin by introducing the notion of a complete $p$-torsion pp-type.

%ndeed, Szmielew's characterization of complete theories of abelian implies that if an abelian group $A$ is not bounded, i.e., of finite exponent, then $A \equiv A \oplus \bbQ.$

\begin{definition} Let $T \supseteq \Th (\Ab)$ be a first-order theory (which is not necessarily complete).  $q(x_1, x_2, \ldots, x_n)$ is a \emph{complete $p$-torsion pp-type consistent with $T$} if there is $M$ a model of $T$ such that $M$ is a $p$-group and $\ba \in M^{n}$ with $q(x_1, x_2, \ldots, x_n) = pp^+(\ba, M)$. %Denote by $D_n(T_p)$ the set of complete $p$-torsion pp-types consistent with $T$.
\end{definition}

In particular, if $q(x_1, x_2, \ldots, x_n)$ is a complete $p$-torsion pp-type consistent with $T$,  $q(x_1, x_2, \ldots, x_n)$ includes a $p$-torsion requirement on each free variable $x_i.$

The next result is the key technical result to prove the main theorem from a model theoretic perspective.

\begin{theorem}\label{T:p-small}
For every $n \geq 1,$ the set of complete $p$-torsion pp-types $q(x_1, x_2, \ldots, x_n)$ consistent with the theory of abelian groups is countable.
\end{theorem}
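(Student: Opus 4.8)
The plan is to reduce the problem to understanding, for each fixed $n$, how many distinct pp-types $\pp^+(\ba, M)$ can arise as $\ba$ ranges over $n$-tuples in $p$-groups $M$. Since every pp-formula in $L_\mathbb{Z}$ is equivalent modulo $\Th(\Ab)$ to a finite conjunction of \emph{divisibility conditions} of the form $s' \mid \sum_{i=1}^n z'_i v_i$, and since in a $p$-group each such $s'$ may be replaced by a power $p^k$ (as noted in the excerpt just before Theorem~\ref{T:p-small}), a complete $p$-torsion pp-type $q(v_1,\ldots,v_n)$ is determined by which conditions of the form $p^k \mid \sum_{i=1}^n z_i v_i$ it contains, together with the annihilator conditions $p^\ell v_i \doteq 0$. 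The key observation is that the data of the pp-type is not just a raw set of such conditions but is highly constrained: it is a coherent "filtration by heights" of the subgroup of $M^n$ generated by $\ba$, or equivalently, for each linear combination $c = \sum z_i a_i$, the pp-type records the \emph{height} $\mathrm{ht}_p(c) \in \mathbb{N} \cup \{\infty\}$ of $c$ in $M$ (the largest $k$ with $p^k \mid c$, or $\infty$), together with its order.

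First I would set up the bookkeeping: associate to $\ba \in M^n$ the function $h_{\ba} \colon \mathbb{Z}^n \to \mathbb{N}\cup\{\infty\}$, $h_{\ba}(\bz) = \mathrm{ht}_p\!\big(\sum z_i a_i\big)$ (with $\mathrm{ht}_p(0) = \infty$), and observe that $\pp^+(\ba, M)$ is completely determined by $h_{\ba}$ — because every relevant pp-formula is a conjunction of conditions "$p^k$ divides this $\mathbb{Z}$-linear combination," which is read off directly from $h_{\ba}$. So it suffices to bound the number of functions $h \colon \mathbb{Z}^n \to \mathbb{N} \cup \{\infty\}$ that actually arise this way. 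Now $\mathbb{Z}^n$ is countable, so the set of \emph{all} functions $\mathbb{Z}^n \to \mathbb{N}\cup\{\infty\}$ has size continuum; the whole point is that the arising $h$ form only a countable subset. The mechanism that forces this is that $h_{\ba}$ satisfies strong valuation-type constraints — for instance $h_{\ba}(\bz + \bz') \geq \min(h_{\ba}(\bz), h_{\ba}(\bz'))$ with equality unless the two heights coincide, $h_{\ba}(p\bz) = h_{\ba}(\bz) + 1$ or $h_{\ba}(\bz)=\infty$, homogeneity $h_{\ba}(m\bz) = h_{\ba}(\bz)$ for $(m,p)=1$, and so on — and a compactness/finite-determination argument then shows each such $h$ is determined by its restriction to a finite sublattice up to the finitely many "jump" parameters. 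Concretely, I would argue that $h_{\ba}$ is determined by: the Ulm-type invariants of the finite or cofinite pieces, and — crucially — that only finitely many finite heights can appear among the generators before one must either reach $\infty$ or stabilize, so that the type is coded by a finite amount of data drawn from a countable set. An alternative, cleaner route: embed into the theory of $p$-torsion modules and invoke that the $\ba$-generated pp-type corresponds to a finitely generated submodule of a $p$-group together with its pp-type in a pure-injective envelope, and the indecomposable pure-injective $p$-groups are, by Ziegler's classification, just the $\mathbb{Z}(p^n)$ for $n \in \mathbb{N}\cup\{\infty\}$ and $\hat{\mathbb{Z}}_p$ — a countable list — so that any finitely generated configuration lives in a finite direct sum of countably-many isomorphism types, and there are only countably many such configurations.

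The main obstacle, I expect, is making the "only finitely much data" claim fully rigorous without circularity: one has to rule out pathological $h_{\ba}$ with, say, infinitely many distinct finite heights appearing among combinations $\sum z_i a_i$ with bounded coefficients in a way that is not captured by finitely many parameters. The resolution is that the subgroup $\langle a_1, \ldots, a_n \rangle$ of $M$ is a finitely generated abelian $p$-group, hence finite, say of order $p^N$; but the \emph{heights} of its elements are computed in the ambient $M$, not in the subgroup, so they can be arbitrarily large or infinite. The right move is therefore to pass to the pure-injective hull $H$ of $\langle \ba \rangle$ inside (the pure-injective hull of) $M$: by Theorem~\ref{T:main} — which is stated later in the paper and which I am allowed to cite — this $H$ is a \emph{finite} direct sum of indecomposable pure-injective $p$-groups, each from the countable list $\{\mathbb{Z}(p^k) : k \in \mathbb{N}\} \cup \{\mathbb{Z}(p^\infty)\} \cup \{\widehat{\mathbb{Z}}_p\}$. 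The pp-type of $\ba$ in $M$ equals its pp-type in $H$ (purity), and $H$ ranges over a countable set of groups, each countable, so the pairs $(H, \ba)$ — and a fortiori the pp-types — form a countable set. I would present this pure-injective-hull argument as the main line, with the valuation-function bookkeeping as the conceptual picture, and flag that Theorem~\ref{T:main} is what does the heavy lifting; if one wants Section~3 to be genuinely independent of Section~4 as the introduction promises, one instead proves the needed finiteness of the pure-injective hull of a finite subgroup directly by an Ulm-invariant/elementary argument, which is the genuinely technical step.
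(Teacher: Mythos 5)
Your main line is correct in outline, but it takes a genuinely different route from the paper and needs one repair. The paper's own proof of Theorem~\ref{T:p-small} is a short, self-contained coding argument: since $q$ is realized by a tuple in a $p$-group, there is a least $m$ with $p^m x_i \doteq 0 \in q$ for all $i$; membership in $q$ of the two basic kinds of formulas, $\sum_i r_i x_i \doteq 0$ and $p^k \mid \sum_i r_i x_i$, depends only on the residues of the $r_i$ modulo $p^m$, so $q$ is coded by $m$, a subset $\Omega$ of the finite group $\bigoplus_{i=1}^n \mathbb{Z}(p^m)$, and a decreasing map $\eta$ from $\mathbb{N}$ into subsets of that finite group, which is eventually constant --- hence countably many codes. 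This is exactly your ``height function'' picture made precise, and the obstacle you worried about dissolves because the data factors through a finite group: a descending chain of subsets of a finite set stabilizes, so no compactness, Ulm-invariant, or hull analysis is needed. Your actual main argument instead cites Theorem~\ref{T:main} and Fact~\ref{F:pi hull}(1): $\pp^+(\ba,M)=\pp^+(\ba,H(M,\langle\ba\rangle))$, the hull is a finite direct sum of groups $\mathbb{Z}(p^k)$, $k\in\mathbb{Z}_{>0}\cup\{\infty\}$, and one counts pairs $(H,\ba)$. This is not circular (the proof of Theorem~\ref{T:main}, via Lemma~\ref{L:summnad} and Ziegler's machinery, nowhere uses Theorem~\ref{T:p-small}), but it imports precisely the Section~4 tools that Section~3 is meant to avoid, collapsing the paper's two independent arguments into one. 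Also, you must drop $\widehat{\mathbb{Z}}_p$ from your list of possible summands: Theorem~\ref{T:main} excludes it, and your counting needs this, since $\widehat{\mathbb{Z}}_p$ is uncountable and ``countably many pairs $(H,\ba)$'' would otherwise fail. With that correction your hull argument is sound; the paper's coding argument is shorter, more elementary, and faithful to the intended independence of Section~3.
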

\begin{proof}
Let us associate to $q(x_1, x_2, \ldots, x_n)$ a triple $(m, \grO, \eta)$ that will only depend on $q(x_1, x_2, \ldots, x_n)$ in such a way that the map
$q \mapsto (m, \grO, \eta)$ is one-to-one and so that the cardinality of the set of such triples is countable. Define the triple $(m, \grO, \eta)$ as follows.
\begin{description}
\item[$m$]  As each $x_i$ satisfies a torsion condition $p^{m_i}x_i \doteq 0 \in q$ for $m_i \in \mathbb{N}$ since the type is realized in a $p$-group,  there is a natural number $m'$ such that $p^{m'} x_i \doteq 0 \in q$ for $i \in \{1, \cdots, n\}.$ Let $m \in \bbN$ be the least such $m'.$ In other words, $m$ is the least natural number 
for which $p^mx_i \doteq 0 \in q$ for $i \in \{1, \cdots, n\}.$
\medskip

\item[$\grO$] Now that $m$ has been fixed, observe that $r_1x_1 + r_2x_2 + \cdots + r_nx_n  \doteq  0 \in q$ if and only if $ r'_1x_1 + r'_2x_2 + \cdots + r'_nx_n  \doteq  0 \in q$  for  $r_i, r_i' \in \mathbb{Z}$ such that $r_i \equiv r'_i \pmod{p^m}$ for $i \in \{1, \cdots, n\}.$ Let 
$$\grO  := \{ (\rbar_1, \rbar_2, \cdots, \rbar_n) \in \bigoplus_{i=1}^n \bbZ (p^m) \; : \;  r_1x_1 + r_2x_2 + \cdots + r_nx_n \doteq 0 \in q \}$$
where $\rbar \in \bbZ (p^m)$ denotes the congruence class modulo $p^m$ of $r \in \bbZ.$ It follows that once $m$ is given, there are only finitely many possibilities for $\grO$ by the finiteness of $\bigoplus_{i=1}^n \bbZ (p^m)$.

\medskip

\item[$\eta$] Now that $m$ has been fixed, observe that  $p^k | r_1x_1 + r_2x_2 + \cdots + r_nx_n  \in q$ if and only if $p^k | r'_1x_1 + r'_2x_2 + \cdots + r'_nx_n \in q,$ for $k, r_i, r_i' \in \mathbb{Z}$  such that $r_i \equiv r'_i \pmod{p^m}$ for $i \in \{1, \cdots, n\}.$

Let  $\eta \colon \bbN \to \mathcal{P}(\bigoplus_{i=1}^n \bbZ (p^m)))$ be given by:

$$\eta(k) := \{ (\rbar_1, \rbar_2, \cdots, \rbar_n) \in \bigoplus_{i=1}^n \bbZ (p^m) \; : \;  p^k|r_1x_1 + r_2x_2 + \cdots + r_nx_n \in q \} \leq \bigoplus_{i=1}^n \bbZ (p^m).$$

Observe that if $k < k^*$ then $\eta(k^*) \subseteq \eta(k)$. Since $\bigoplus_{i=1}^n \bbZ (p^m)$ is finite,  the descending chain $\eta(0) \supseteq  \eta(1) \supseteq \cdots $ is stationary. Hence the map $\eta$ is eventually constant. It follows that once $m$ is given, there are only countably many possibilities for $\eta.$

\end{description}

We have shown that the map $q \mapsto (m, \grO, \eta)$ is well-defined and that its image is countable. To see that it is one-to-one, assume that $(m_q, \grO_q, \eta_q) = (m_s, \grO_s, \eta_s)$ where $q(x_1, x_2, \ldots, x_n), s(x_1, x_2, \ldots, x_n)$ are complete pp-types as given in the statement of the theorem.  By \cite[2.$\mathbb{Z}$.1]{prest} it is enough to show that formulas of the form $r_1x_1 + r_2x_2 + \cdots + r_nx_n  \doteq  0$ and  $p^k | r_1x_1 + r_2x_2 + \cdots + r_nx_n$ are preserved between $q$ and $s$. The first type of formula is preserved by the definition of $\grO$ and the second by that of $\eta$. 
\end{proof}

Let us introduce the notion of $p$-torsion pp-types over non-empty sets.

\begin{definition} Let $M$ be a $p$-group and $A \subseteq M$. $q(x)$ is a \emph{complete $p$-torsion $M$-pp-type over $A$} if there is $N$ an elementary extension of $M$ such that $N$ is a $p$-group and $b \in N$ with $q(x)=\pp^+(b/A, N)$.
\end{definition}

We will be interested in building models which realize a given pp-type and no additional pp-formulas.

\begin{definition}  Let $M$ be a $p$-group and $A \subseteq M$. 
$q(x)$ a complete $p$-torsion $M$-pp-type over $A$ is \emph{strictly realized} in $M^* \succeq M$ if there is $b \in M^{*}$ such that $q(x)=\pp^+(b/A, M^*)$.
\end{definition}

\begin{corollary} \label{C:first homo approx}
If $M$ is a countable $p$-group, then there exists $M_1$ a countable $p$-group elementary extension of  $M$ such that for every finite subset $A$ of $M$ and $q(x)$ a complete $p$-torsion $M$-pp-type over $A$, $q(x)$ is strictly realized in $M_1$. 
\end{corollary}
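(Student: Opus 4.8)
The plan is to realize every $p$-torsion $M$-pp-type over a finite subset of $M$ simultaneously inside one elementary extension of $M$ built by compactness, and then to pass to its $p$-primary component so as to land back in a $p$-group. First I would check that there are only countably many types to handle. Given a finite $A = \{a_1, \dots, a_n\} \subseteq M$ and a complete $p$-torsion $M$-pp-type $q(x)$ over $A$, erasing the names of the parameters turns $q$ into $\{\grf(x, y_1, \dots, y_n) : \grf(x, a_1, \dots, a_n) \in q\}$, which is the pp-type of the $(n{+}1)$-tuple $(b, a_1, \dots, a_n)$ in a witnessing $p$-group — a complete $p$-torsion pp-type in $n{+}1$ variables consistent with $\Th(\Ab)$ — and $q$ is recovered from it by plugging the $a_i$ back in. By Theorem \ref{T:p-small} there are countably many such pp-types for each $n$, and $M$ has countably many finite subsets, so the pairs $(A,q)$ form a countable set; enumerate them as $\{(A_k, q_k) : k < \omega\}$ and fix for each $k$ a witness $N_k \succeq M$, a $p$-group, and $b_k \in N_k$ with $q_k = \pp^+(b_k / A_k, N_k)$.

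Next I would run the compactness argument. Work in $L_\mathbb{Z}$ expanded by a constant $c_a$ for each $a \in M$ and a constant $d_k$ for each $k$, and let $T^*$ consist of the elementary diagram of $M$ together with, for each $k$ and each pp-formula $\grf(x,\bar y)$, the sentence $\grf(d_k, \bar c_{\bar a_k})$ when $\grf(x, \bar a_k) \in q_k$ and its negation otherwise, where $\bar a_k$ lists $A_k$. A model of $T^*$ is an elementary extension $P \succeq M$ carrying elements $d_k$ with $\pp^+(d_k / A_k, P) = q_k$ for every $k$, the negative conditions being what forces equality rather than mere containment. For consistency, a finite fragment of $T^*$ mentions only $d_{k_1}, \dots, d_{k_r}$; the union of the elementary diagrams of $N_{k_1}, \dots, N_{k_r}$, with the names of $M$ identified, is finitely satisfiable (any finite piece can be realized in $M$ itself, since $M$ is an elementary submodel of each $N_{k_j}$), giving a common elementary extension $Q \succeq M$ of all the $N_{k_j}$ over $M$; interpreting $d_{k_j}$ by the image of $b_{k_j}$ then satisfies the fragment, because elementary embeddings preserve pp-formulas together with their negations and $q_{k_j} = \pp^+(b_{k_j}/A_{k_j}, N_{k_j})$. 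Taking a countable model $P \models T^*$ by downward L\"owenheim--Skolem, we have $M \preceq P$ countable with $\pp^+(d_k / A_k, P) = q_k$ for all $k$.

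Finally I would descend to a $p$-group by setting $M_1 := t_p(P)$. This is a countable $p$-group; it contains $M$ because $M$ is a $p$-group, and it contains each $d_k$ because $q_k$ includes a torsion condition $p^m x \doteq 0$. The $p$-primary component is always a pure subgroup, so $M_1 \leq_p P$, hence pp-types of elements of $M_1$ over parameters from $M_1$ are the same in $M_1$ as in $P$; in particular $\pp^+(d_k / A_k, M_1) = q_k$, so every $q_k$ is strictly realized in $M_1$. It remains to see $M \preceq M_1$. From $M \preceq P$ we get $M \leq_p P$, and since $M \leq M_1 \leq P$ this yields $M \leq_p M_1$. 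Also $M \equiv M_1$: as $M$ is a $p$-group, $M = t_p(M)$, and passing to $p$-primary components preserves elementary equivalence of abelian groups — it leaves the $p$-Ulm--Kaplansky invariants unchanged and trivializes the rest — so $M = t_p(M) \equiv t_p(P) = M_1$. By \cite[2.25]{prest}, a pure embedding between elementarily equivalent abelian groups is elementary, so $M \preceq M_1$, as required.

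The step I expect to be the main obstacle is this last descent. Since the class of $p$-groups is not first-order axiomatizable, the model delivered by compactness need not be a $p$-group, and one must argue that replacing $P$ by $t_p(P)$ keeps all the types strictly realized (immediate from purity of $t_p(P)$ in $P$) \emph{and} preserves the elementary extension $M \preceq M_1$. The latter is exactly where the hypothesis that $M$ is itself a $p$-group enters — through $M = t_p(M)$, the stability of elementary equivalence under $t_p$, and \cite[2.25]{prest}.
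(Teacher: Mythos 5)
Your proposal is correct in outline, and its construction genuinely differs from the paper's. Where you realize all the (countably many, by Theorem \ref{T:p-small}) pairs $(A_k,q_k)$ at once by a single compactness argument --- adding a constant $d_k$ for each type, imposing the positive pp-conditions and the negations of all pp-formulas outside $q_k$, verifying finite satisfiability by elementary amalgamation of the witnessing $p$-groups $N_k$ over $M$, and then cutting down by downward L\"owenheim--Skolem --- the paper instead builds a countable elementary chain, strictly realizing one type at a time by the standard argument of realizing $q_i(x)\cup\{\neg\grf(x):\grf \text{ pp},\ \grf\notin q_i\}$, and takes the union. Both are standard; your version buys a shorter, one-shot construction (at the cost of handling the negative conditions and the amalgamation explicitly), while the chain construction avoids any diagram bookkeeping and is the form that is re-used almost verbatim inside Theorem \ref{T:T-universal torsion}. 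The final descent to $M_1=t_p(P)$ --- purity of the $p$-primary component preserving strict realization, $M\leq_p M_1$, and elementarity of the inclusion via \cite[2.25]{prest} --- is essentially identical to the paper's.

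The one place you should firm up is the claim $M\equiv t_p(P)$. Your stated reason, that passing to $p$-primary components ``leaves the $p$-Ulm--Kaplansky invariants unchanged and trivializes the rest,'' is not by itself a proof: the Ulm--Kaplansky invariants do not determine the elementary theory of a $p$-group (for instance $0$, $\bbZ(p^\infty)$ and $\bbZ(p^\infty)^{(2)}$ have identical, vanishing Ulm--Kaplansky invariants but are pairwise non-equivalent), so one must also control the remaining Szmielew-type invariants. The statement you need is true, and the paper gives the cleanest justification, which you could substitute verbatim: for every pair of pp-formulas $\psi\to\grf$, purity of $M$ in $t_p(P)$ and of $t_p(P)$ in $P$ gives
$$|\grf(M)/\psi(M)|\ \leq\ |\grf(t_p(P))/\psi(t_p(P))|\ \leq\ |\grf(P)/\psi(P)|,$$
and $M\preceq P$ forces the two ends to agree modulo $\infty$; hence all Baur--Monk invariants of $M$ and $t_p(P)$ agree and $M\equiv t_p(P)$ by \cite[2.18]{prest}. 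With that replacement your argument is complete.
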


\begin{proof} We begin by proving the following claim.

\underline{Claim}:  There exists $M'$ a countable elementary extension of  $M$  such that for every finite subset $A$ of $M$ and $q(x)$ a complete $p$-torsion $M$-pp-type over $A$, $q(x)$ is strictly realized in $M'$.

\underline{Proof of Claim:} Let $\{A_i : i < \omega\}$ be an enumeration without repetitions of the finite subsets of $M$. We build $\{ M_n : n < \omega \}$ by induction such that:

\begin{enumerate}
 \item $M_0 = M$.
 \item For every $n$, $M_n$ is countable and $M_n \preceq M_{n+1}$.
 \item For every $n$, $M_{n+1}$ strictly realizes every complete $p$-torsion $M$-pp-types over $A_n$.
\end{enumerate}

The base step is given so we do the induction step.  Let $\{ q_i(x) : i < \omega \}$  be an enumeration without repetitions of the complete $p$-torsion $M$-pp-types over $A_n$, this is possible by Theorem \ref{T:p-small}. We build $\{ N_i : i < \omega \}$ by induction such that:

\begin{enumerate}
 \item $N_0 = M_n$.
 \item For every $i$, $N_i$ is countable and $N_i \preceq N_{i+1}$.
 \item For every $i$, $N_{i+1}$ strictly realizes $q_{i}(x)$.
\end{enumerate} 

The base step is given so we do the induction step. In the induction step, the  standard argument (see for example \cite[4.1.3]{mar}) can be used to obtain $N_{i+1}$ a countable elementary extension of $N_i$ realizing the type $q_i(x) \cup \{ \neg \grf(x) : \grf \text{ is a \emph{pp}-formula and } \grf(x) \notin q_i(x)\} $. Then $N_{i+1}$ strictly realizes $q_{i}(x)$. This finishes the construction of the $N_i$'s. Let $M_{n+1} =  \bigcup_{i < \omega} N_i$. Clearly $M_{n+1}$ is as needed. 

Let $M' = \bigcup_{n < \omega} M_n$. It is clear the $M'$ is as needed. $\dagger_{\text{Claim}}$

Let $M'$ be as above.  To obtain a countable {\em $p$-torsion} elementary extension with the desired property, just take the $p$-torsion subgroup $t_p(M')$ which is a pure subgroup of $M'$.
It follows that for every pair of pp-formulae $\psi \to \grf,$ 
$$|\grf (M)/\psi (M)| \leq |\grf (t_p(M'))/\psi (t_p(M'))| \leq | \grf (M')/\psi (M') | \equiv |\grf (M)/ \psi (M)| \!\! \pmod{\infty}$$
so $t_p(M')$ is elementarily equivalent to $M$ by \cite[2.18]{prest}.  It follows that $M \preceq t_p(M') \preceq M'$ are elementary embeddings by \cite[2.25]{prest}. Moreover, $t_p(M')$ strictly realizes every complete $p$-torsion $M$-pp-type over any  finite subset of $M$ as $M'$ has this property. \end{proof}

We introduce a notion of saturation for $p$-torsion pp-types.

\begin{definition}
\emph{$M$ is $\aleph_0$-saturated for $p$-torsion pp-types} if $M$ is a $p$-group and for every $A$ a finite subset of $M$ and $q(x)$  a complete $p$-torsion $M$-pp-type over $A$, $q(x)$ is strictly realized in $M$. 
\end{definition}

%Let $\mcL$ be a first-order language. An $\mcL$-structure $H$ is {\em countably homogeneous} if given an $(n+1)$-tuple $(\ba, a_{n+1})$ and an $n$-tuple $\bb$ in $H$ such that $(H, \ba) \equiv (H, \bb),$ there exists $b_{n+1} \in H$ such that
%$(H, \ba, a_{n+1}) \equiv (H, \bb, b_{n+1}).$ A {\em back and forth} argument can be used (see~\cite{Luo}) to show that two countable countably homogeneous $\mcL$-structures are isomorphic if they satisfy the same types in finitely many variables. 

\begin{theorem} \label{T:T-universal torsion}
Let $T \supseteq \Th (\Ab)$ be a complete theory of abelian groups that has a model which is a $p$-group. Then there is $U$ a countable model of $T$ such that $U$  is $\aleph_0$-saturated for $p$-torsion pp-types. Moreover, every countable model of $T$ which is a $p$-group can be elementarily embedded into $U$.
\end{theorem}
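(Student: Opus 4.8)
The plan is to build $U$ as an increasing union of a countable elementary chain, iterating the construction of Corollary~\ref{C:first homo approx} and interleaving it with a realization of the universality requirement. Concretely, I would start with any countable model $M_0$ of $T$ which is a $p$-group (it exists by L\"owenheim--Skolem applied to the given model of $T$), and construct a chain $M_0 \preceq M_1 \preceq M_2 \preceq \cdots$ of countable $p$-group models of $T$ such that, for every $n$, every complete $p$-torsion $M_n$-pp-type over every finite subset of $M_n$ is strictly realized in $M_{n+1}$. Each step $M_n \mapsto M_{n+1}$ is exactly what Corollary~\ref{C:first homo approx} provides. Then I set $U := \bigcup_{n<\omega} M_n$; since each $M_n$ is a $p$-group, so is $U$, and it is a countable model of $T$ by the elementary chain theorem (Tarski--Vaught).

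The verification that $U$ is $\aleph_0$-saturated for $p$-torsion pp-types is the routine union-of-chain argument: given a finite $A \subseteq U$, by finiteness $A \subseteq M_n$ for some $n$, and given a complete $p$-torsion $U$-pp-type $q(x)$ over $A$, I need $q$ to be an $M_n$-pp-type over $A$ so that $M_{n+1}$ strictly realizes it. This uses that $M_n \preceq U$ are elementary, so the pp-type of any tuple realizing $q$ in an elementary extension of $U$ restricts correctly; one must be slightly careful that ``strictly realized'' (i.e. $\pp^+(b/A,M_{n+1}) = q(x)$) is preserved when passing up to $U$, which follows because $M_{n+1} \preceq U$ forces $\pp^+(b/A,U) = \pp^+(b/A,M_{n+1})$ for $b \in M_{n+1}$, and any pp-formula not in $q$ that held of $b$ in $U$ would already hold in $M_{n+1}$ by elementarity. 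This is the easy half.

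The universality clause is the part that needs the real idea. Given a countable model $N$ of $T$ which is a $p$-group, I want an elementary embedding $N \to U$; since $T$ is complete and $N$ is a model, $N$ and $U$ are elementarily equivalent, so by \cite[2.25]{prest} it suffices to produce a \emph{pure} embedding $N \to U$. I would build this by a back-and-forth-style (really just forth) transfinite recursion of length $\omega$: enumerate $N = \{a_0, a_1, \ldots\}$ and construct an increasing chain of partial pure monomorphisms $f_k \colon (N, \{a_0,\ldots,a_{k-1}\}) \to (U, C_k)$, with $f_0$ empty. At stage $k$, having defined $f_k$ on the finite set $B_k = \{a_0, \ldots, a_{k-1}\}$ with image $C_k$ a finite subset of $U$, I consider $q(x) := \pp^+(a_k / B_k, N)$, pushed forward along $f_k$ to a set of pp-formulas with parameters in $C_k$; since $f_k$ is a partial pure monomorphism this is a complete $p$-torsion type over $C_k$, and I need it to be a complete $p$-torsion $U$-pp-type over $C_k$ (i.e. realized in some $p$-group elementary extension of $U$) so that $\aleph_0$-saturation lets me strictly realize it by some $b \in U$; set $f_{k+1}(a_k) = b$. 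The union $f = \bigcup_k f_k \colon N \to U$ is then a partial pure monomorphism with domain all of $N$, hence (since $N$, $U$ are groups) a pure group monomorphism, hence elementary as $N \equiv U$.

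The main obstacle, and the step to watch carefully, is the bookkeeping that keeps the pushed-forward type $f_k(q)$ a \emph{genuine} $p$-torsion $U$-pp-type over $C_k$ and keeps $f_{k+1}$ a partial pure monomorphism; i.e. one must check that strict realization of $f_k(q)$ by $b$ makes $\pp^+(C_{k+1}, U) = f_{k+1}(\pp^+(B_{k+1}, N))$, not just containment on the new generator. This follows from the simplified normal form \eqref{Eq:conj}: a pp-type of a finite tuple $(a_0,\ldots,a_k)$ is determined by the linear conditions $\sum r_i v_i \doteq 0$ and divisibility conditions $p^\ell \mid \sum r_i v_i$ it contains, and strict realization of $q(x) = \pp^+(a_k/B_k,N)$ over the already-matched parameters $C_k$ pins down exactly these. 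To get that $f_k(q)$ is realized in a $p$-group elementary extension of $U$, I would note that $q$ is realized in $N \preceq$ some common elementary extension of $N$ and $U$ (possible since $N \equiv U$), and that the $p$-torsion subgroup of such an extension is again an elementary $p$-group extension of $U$ by the argument already used in the proof of Corollary~\ref{C:first homo approx} (via \cite[2.18]{prest}, \cite[2.25]{prest}); the image of $q$ there lands in the $p$-torsion part, so it is a complete $p$-torsion $U$-pp-type over $C_k$ as required.
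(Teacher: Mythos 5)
Your overall architecture is the same as the paper's: the chain-and-union construction based on Corollary~\ref{C:first homo approx} for $\aleph_0$-saturation for $p$-torsion pp-types, and a ``forth'' construction by partial pure monomorphisms, transported through the $p$-torsion part of an elementary extension of $U$, for universality; the concluding step (pure $+$ elementarily equivalent $\Rightarrow$ elementary, \cite[2.25]{prest}) is also the paper's. The saturation half of your argument is fine.

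The step that is not justified as written is the claim that $f_k(q)$ is a complete $p$-torsion $U$-pp-type over $C_k$ because ``$q$ is realized in $N \preceq$ some common elementary extension of $N$ and $U$.'' A common elementary extension obtained merely from $N \equiv U$ realizes $q$ over the $N$-copy of $B_k$, which has no a priori identification with $C_k \subseteq U$; to transport the realization so that it sits over the actual parameters $C_k$ you must amalgamate $N$ and $U$ \emph{over the map} $f_k$, and for that you need $f_k$ to be a partial elementary map, not merely a partial pure monomorphism. This is precisely where the paper invokes completeness of $T$ together with pp-quantifier elimination (\cite[\S 2.4]{prest}): since $N \equiv U$ and $f_k$ preserves pp-formulas in both directions, it preserves full first-order types, so the pushed-forward type --- which the paper takes together with the negations $\neg\varphi$ for pp-formulas $\varphi \notin q$, and it is these negations that guarantee the witness realizes $f_k(q)$ \emph{exactly} rather than some proper superset (your gloss of ``complete $p$-torsion $U$-pp-type'' as ``realized in some $p$-group elementary extension'' elides exactly this point) --- is a genuine $U$-type over $C_k$, hence realized in an elementary extension of $U$, after which the passage to the $p$-torsion subgroup goes as you describe. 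Once the partial-elementarity observation is inserted, your transport-from-$N$ variant also yields exact realization (since $a_k$ realizes $q$ exactly in $N$ and an elementary embedding over $f_k$ preserves negated pp-formulas), so the gap is repairable with tools you already cite; but as written the key justification for aligning the parameters is missing, and without it the construction does not go through.
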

\begin{proof}
Let $M \models T$ be a countable $p$-group. Using Corollary \ref{C:first homo approx} it is easy to construct  $\{ M_n : n < \omega \}$ such that:

\begin{enumerate}
\item $M_0 = M$.
\item For every $n$, $M_n$ is a countable $p$-group and $M_n \preceq M_{n+1}$.
\item For every $n$, if $q(x)$ is a complete $p$-torsion $M_{n}$-pp-type over $A$ a finite subset of $M_n$, then $q(x)$ is strictly realized in $M_{n+1}.$
\end{enumerate}

 Let $U = \bigcup_{n<\gro}\, M_n$. $U$ is a countable $p$-group and an elementary extension of $M$ that is $\aleph_0$-saturated for $p$-torsion pp-types by $(3)$.

Let us show that every countable model of $T$ which is a $p$-group can be elementarily embedded into $U$. Let $N$ be a countable model of $T$ which is a $p$-group. Let $\{ n_i : i < \omega \}$ be an enumeration without repetitions of $N$.

We build $\{ f_i : i < \omega \}$ by induction such that:
\begin{enumerate}
\item For every $i$, $f_i: (N, \{ n_k : k \leq i \})  \to (U, \{f_i(n_k) : k \leq i \}) $ is  a partial pure monomorphism.
\item If $i < j$, then $f_i \subseteq f_j$.
\end{enumerate}

Clearly $f = \bigcup_{i < \omega} f_i:  N \to U$ is a pure monomorphism and $f$ is an elementary embedding because $U$ and $N$ are elementary equivalent as they are both models of a complete theory. So it is enough to do the construction.

\underline{Base step}: $i = 0$. Let $p(x) = \pp^+ (n_0,N) \cup \{ \neg \grf(x) : \grf\text{ is a \emph{pp}-formula and } \grf(x) \notin \pp^+ (n_0,N)\}$. Since $T$ is a complete theory, it follows that $p(x)$ is a $U$-type (over $\emptyset$) in the classical sense. So there is $M^*$ an elementary extension of $U$ and $m^* \in M^*$ realizing $p(x)$, see for example \cite[4.1.3]{mar}. An analogous argument to that of Corollary \ref{C:first homo approx} shows that $t_p(M^*)$ is an elementary extension of $U$. Moreover, since $N$ is a $p$-group and $p$-torsion conditions are expressible by \emph{pp}-formulas,  $m^* \in t_p(M^*)$ realizing $p(x)$. Let $q(x)=\pp^+(m^*, t_p(M^*))$. Clearly $q(x)$ is a complete $p$-torsion $U$-pp-type over $\emptyset$, then there is $u_0 \in U$ strictly realizing $q(x)$ because $U$ is $\aleph_0$-saturated for $p$-torsion pp-types. Let $f_0: (N, \{ n_0 \})  \to (U, \{f_0(n_0) \}) $ be given by $f_0(n_0)=u_0$. One can show that $f_0$ is a partial pure monomorphism.

\underline{Induction step}: Let $i = j +1$. Let $p(x) = \pp^+ (n_{j+1}/ \{ n_k : k\leq j\} ,N) \cup  \{ \neg \grf(x, \textbf{n}) : \grf\text{ is a \emph{pp}-formula and } \grf(x, \textbf{n}) \notin \pp^+ (n_{j+1}/ \{ n_k : k\leq j\} ,N)\} $. Let $q(x) = f_{j}(p(x)) = \{ \grf(x, f_j(\textbf{n})) : \grf(x, \textbf{n})   \in p(x) \}  $. Since $T$ is a complete theory, $N, U$ are elementary equivalent. Since $f_j$ is a partial pure monomorphism, by pp-quantifier elimination (see for example \cite[\S 2.4]{prest}) it follows that $q(x)$ is a $U$-type over $\{ f_j(n_k) : k \leq j \}$. Then an analogous argument to the one for the base step can be used to obtain $u_{ j+1} \in U$ such that $f_{j+1} = f_j \cup \{ (n_{j+1}, u_{j+1}) \}$ is a partial pure monomorphism. \end{proof}

\begin{remark}
Let $T = \Th(\bigoplus_{n =1}^\infty \bbZ (p^n)^{(\aleph_0)} \oplus \bbZ (p^\infty)^{(\aleph_0)})$. If $M$ is a $p$-group, it follows from \cite{szm} that $M$ is elementarily equivalent to $\bigoplus_{n =1}^\infty \bbZ (p^n)^{(\alpha_n)} \oplus \bbZ (p^\infty)^{(\alpha_\infty)}$ for $\alpha_n$ countable for every $n \in \mathbb{Z}_{>0} \cup \{ \infty\}$. Then it is easy to show that $M \oplus \bigoplus_{n =1}^\infty \bbZ (p^n)^{(\aleph_0)} \oplus \bbZ (p^\infty)^{(\aleph_0)}$ is a model of $T$. 
\end{remark}
We are ready to provide a positive solution to Fuchs' problem.

\begin{theorem}
There is a countable universal  $p$-group for purity.
\end{theorem}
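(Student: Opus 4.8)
The plan is to combine the two main results that precede this statement: Theorem~\ref{T:T-universal torsion}, which gives, for each complete theory $T$ of abelian groups having a $p$-group model, a countable model $U_T$ of $T$ that is $\aleph_0$-saturated for $p$-torsion pp-types and into which every countable $p$-group model of $T$ elementarily embeds; and the observation (used implicitly in the model-theoretic section) that there are only \emph{countably many} complete theories of abelian $p$-groups, or at least only countably many that matter up to the universality we need. Concretely, I would first recall Szmielew's classification: every $p$-group is elementarily equivalent to one of the form $\bigoplus_{n=1}^\infty \bbZ(p^n)^{(\alpha_n)} \oplus \bbZ(p^\infty)^{(\alpha_\infty)}$, and among countable $p$-groups the invariants $\alpha_n \in \mathbb{Z}_{\geq 0}\cup\{\aleph_0\}$ range over a countable set, so there are only countably many complete theories $\{T_j : j < \omega\}$ realized by countable $p$-groups.

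Next I would invoke Theorem~\ref{T:T-universal torsion} to obtain, for each $j < \omega$, a countable model $U_j \models T_j$ that is $\aleph_0$-saturated for $p$-torsion pp-types and receives an elementary embedding from every countable $p$-group model of $T_j$. Form $U := \bigoplus_{j < \omega} U_j$, a countable $p$-group. Given an arbitrary countable $p$-group $N$, by Szmielew $N \models T_j$ for some $j$, so there is an elementary (hence pure) monomorphism $N \hookrightarrow U_j$; composing with the canonical pure embedding $U_j \hookrightarrow U$ of a direct summand, we get a pure embedding $N \leq_p U$. Thus $U$ is a countable universal $p$-group for purity.

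Alternatively — and this is cleaner — I would bypass the direct sum and instead apply the Remark immediately preceding the statement: let $T = \Th\big(\bigoplus_{n=1}^\infty \bbZ(p^n)^{(\aleph_0)} \oplus \bbZ(p^\infty)^{(\aleph_0)}\big)$, and let $U$ be the countable $\aleph_0$-saturated-for-$p$-torsion-pp-types model of $T$ produced by Theorem~\ref{T:T-universal torsion}. For any countable $p$-group $N$, the Remark shows $N \oplus \bigoplus_{n=1}^\infty \bbZ(p^n)^{(\aleph_0)} \oplus \bbZ(p^\infty)^{(\aleph_0)}$ is a model of $T$, so by the "moreover" clause of Theorem~\ref{T:T-universal torsion} it embeds elementarily into $U$; since $N$ is a direct summand of that model, it is a pure subgroup of it, and the composite $N \leq_p N\oplus(\cdots) \preceq U$ exhibits $N$ as a pure subgroup of $U$. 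Either route gives a countable universal $p$-group for purity.

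The only real subtlety — the step I would flag as needing care rather than difficulty — is checking that a direct summand embeds \emph{purely} (this is standard: $z(A\oplus B)\cap A = zA$) and that composition of a pure embedding with an elementary one is pure (immediate, since elementary embeddings are pure by \cite[2.25]{prest}). Everything substantive has already been done in Theorem~\ref{T:p-small} and Theorem~\ref{T:T-universal torsion}; this final theorem is essentially a bookkeeping corollary, so I would keep the proof to a few lines built on the Remark and Theorem~\ref{T:T-universal torsion}.
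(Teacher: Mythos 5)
Your second (``cleaner'') argument is exactly the paper's proof: take $T = \Th\big(\bigoplus_{n=1}^\infty \mathbb{Z}(p^n)^{(\aleph_0)} \oplus \mathbb{Z}(p^\infty)^{(\aleph_0)}\big)$, let $U$ be the model given by Theorem~\ref{T:T-universal torsion}, embed a given countable $p$-group $M$ purely into $M \oplus \bigoplus_{n=1}^\infty \mathbb{Z}(p^n)^{(\aleph_0)} \oplus \mathbb{Z}(p^\infty)^{(\aleph_0)}$ (a countable model of $T$ by the Remark), and compose with the elementary embedding of that model into $U$. So on that route there is nothing to add.

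Your first route, however, rests on a false counting claim and should be dropped rather than offered as an alternative. Knowing that each Szmielew invariant ranges over the countable set $\mathbb{Z}_{\geq 0}\cup\{\infty\}$ does not bound the number of complete theories: there are countably many coordinates, so the sequences of invariants form a set of size $2^{\aleph_0}$, and indeed the groups $\bigoplus_{n\in S}\mathbb{Z}(p^n)$ for distinct infinite $S\subseteq\mathbb{N}$ are pairwise non-elementarily-equivalent (their finite Ulm invariants differ). Hence there is no countable list $\{T_j : j<\omega\}$ exhausting the complete theories of countable $p$-groups, and the direct sum $\bigoplus_{j<\omega} U_j$ cannot be formed as described. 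The whole point of the paper's choice of the single ``biggest'' theory $T$ is precisely to avoid this: every countable $p$-group becomes a pure subgroup (even a direct summand) of a model of that one theory, which is what your second route, and the paper, exploit.
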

\begin{proof} Let $T = \Th(\bigoplus_{n =1}^\infty \bbZ (p^n)^{(\aleph_0)} \oplus \bbZ (p^\infty)^{(\aleph_0)})$ and $U$ be the group obtained by applying Theorem \ref{T:T-universal torsion} to $T$. Let $M$ be a countable $p$-group and $f: M \to M \oplus \bigoplus_{n =1}^\infty \bbZ (p^n)^{(\aleph_0)} \oplus \bbZ (p^\infty)^{(\aleph_0)} $ be the inclusion which is clearly a pure monomorphism. Since $M \oplus \bigoplus_{n =1}^\infty \bbZ (p^n)^{(\aleph_0)} \oplus \bbZ (p^\infty)^{(\aleph_0)}$ is a countable model of  $T$, there is $g: M \oplus \bigoplus_{n =1}^\infty \bbZ (p^n)^{(\aleph_0)} \oplus \bbZ (p^\infty)^{(\aleph_0)}  \to U$ an elementary embedding by Theorem \ref{T:T-universal torsion}. Therefore,  $g \circ f: M \to U$ is a pure monomorphism. 
\end{proof}
The existence of a countable universal $p$-group contrasts with the case of reduced $p$-groups, as it was shown in \cite[3.1]{kojsh} that reduced $p$-groups do not have countable universal groups for purity. 
\begin{remark}
Using Lemma \ref{L:summnad} and slightly modifying the construction of Theorem \ref{T:T-universal torsion}, it is possible to obtain the stronger result of Theorem \ref{main:t}. Nevertheless, we do not think that the methods used in this section can be used to obtain the stronger result of Corollary \ref{C: sum-st}.
\end{remark}

\section{Algebraic argument}

In this section we construct a countable  universal $p$-group for purity. The argument given in this section relies heavily on the model theory of modules. For this reason, we present additional preliminaries in this section. The advantage of this argument is that we  explicitly describe a countable universal countable $p$-group for purity and show that for every countable $p$-group there is a pure essential monomorphism  into a direct summand of the mentioned universal group.

\subsection{Additional preliminaries}
The task presents itself to find sufficient conditions for extending a partial homomorphism $f_0 \colon (M,B) \to (N,C)$ to a homomorphism $(f, f_0) \colon (M,B) \to (N,C)$ of groups with parameters. 
One such sufficient condition is when the group $N$ is \emph{pure injective,} that is, if every pure monomorphism $g \colon N \to K$ has a retraction $h \circ g = 1_N.$ 

\begin{fact}[{\cite[3.6]{ziegler}}] \label{F:pi hull}
Given a group $(M,B)$ with parameters, there exists a pure injective group $(H(M, B), B)$ with parameters such that
\begin{enumerate}
\item the morphism $1_B \colon (M,B) \to (H(M, B), B)$ is a partial pure isomorphism;
\item every partial homomorphism $g_0 \colon \colon (M,B) \to (N,C)$ with $N$ pure injective factors through a homomorphism
$$\xymatrix@C=30pt@R=30pt{(M,B) \ar[r]^-{1_B} \ar[dr]_{g_0} & (H(M, B),B) \ar[d]^{(g,g_0)} \\
& (N,C),
}$$
\item every homomorphism $(h, 1_B) \colon (H(M, B), B) \to (H(M, B), B)$ is an isomorphism of groups with parameters.
\end{enumerate}
\end{fact}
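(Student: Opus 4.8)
The plan is to build $(H(M,B),B)$ as the hull of the pp-type $\pp^+(B,M)$, realized concretely as a minimal pure injective direct summand of the ordinary pure injective hull $H(M)$ of the module $M$. First I would invoke the standard existence of a pure embedding $M \leq_p H(M)$ into a pure injective group (a classical construction; see~\cite{prest},~\cite{ziegler}). Since pure embeddings both preserve and reflect pp-formulae, $1_B\colon (M,B)\to (H(M),B)$ is already a partial pure isomorphism, so $\pp^+(B,H(M))=\pp^+(B,M)$; and since every direct summand $D$ of $H(M)$ is pure injective and a pure subgroup of $H(M)$, one gets $\pp^+(B,D)=\pp^+(B,M)$ whenever $B\subseteq D$. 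Hence it suffices to exhibit a \emph{minimal} direct summand $H(M,B)$ of $H(M)$ containing $B$, for then property (1) holds automatically.

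Exhibiting this minimal summand is the crux. When $B$ is finite it is the classical hull of the finite tuple $B$ in the pure injective group $H(M)$: such a minimal direct summand exists, depends only on $\pp^+(B,M)$, and is unique up to isomorphism over $B$ (Ziegler~\cite[\S 3]{ziegler}, cf.~\cite{prest}). When $B$ is infinite the finite-tuple theory does not apply verbatim; here I would instead pass to the locally coherent Grothendieck category $\mcC$ into which the category of abelian groups embeds fully faithfully, carrying pure-exact sequences to exact sequences and pure injective groups to injective objects (see~\cite{prest}), and take $H(M,B)$ to be the injective envelope in $\mcC$ of the subobject generated by $B$ --- equivalently, the smallest sub-pure-injective of $H(M)$ containing $B$, assembled from the hulls of the finite subtuples of $B$. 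I expect the existence and uniqueness of this minimal summand, especially in the infinite case, to be the main obstacle; the remaining verifications are bookkeeping.

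Granting $H(M,B)$, the three properties follow. Property (1) was recorded above. For (2), a partial homomorphism $g_0\colon (M,B)\to (N,C)$ with $N$ pure injective sends $\pp^+(B,M)$ into $\pp^+(g_0(B),N)$ by the definition of partial homomorphism, so the universal property of the hull of the pp-type $\pp^+(B,M)$ --- in the language of $\mcC$, the extension of a map from a subobject into an injective object along its injective envelope --- produces a morphism $(g,g_0)\colon (H(M,B),B)\to (N,C)$ restricting to $g_0$ on $B$. For (3), if $(h,1_B)\colon (H(M,B),B)\to (H(M,B),B)$ fixes $B$ pointwise, then $\Ker h$ meets the subgroup generated by $B$ trivially and hence vanishes by essentiality, so $h$ is monic; its image is an injective subobject of $H(M,B)$ containing $B$, hence a direct summand, which by essentiality is all of $H(M,B)$, so $h$ is an isomorphism. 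Uniqueness of $(H(M,B),B)$ up to isomorphism over $B$ then follows formally from (1)--(3).
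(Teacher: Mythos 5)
The paper does not prove this statement at all: it is imported verbatim as a Fact from Ziegler \cite[3.6]{ziegler}, so there is no internal proof to compare yours against. Your outline is a reasonable reconstruction of one standard argument, taken by the functor-category route: embed abelian groups fully faithfully into the locally coherent Grothendieck category $\mcC$ via $M \mapsto M \otimes -$, identify pure injective groups with injective objects, and define $H(M,B)$ as the injective envelope, inside the envelope of $M\otimes-$, of the subobject generated by $B$; Ziegler's own proof instead works directly with pp-types and pure-essential extensions built from partial homomorphisms, but the two routes are interchangeable and both handle infinite $B$. Properties (1) and (2) then come out as you say, provided you make explicit the translation step in (2): a map $g_0$ on $B$ preserving pp-formulas corresponds to a morphism in $\mcC$ from the subobject generated by $B$ to $N\otimes-$, via the usual dictionary between pp-types of finite tuples and morphisms from finitely generated subfunctors, extended to infinite $B$ by a direct limit over its finite subsets.

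Two points need repair. First, in (3) the step ``$\Ker h$ meets the subgroup generated by $B$ trivially and hence vanishes by essentiality'' is unjustified if read in the category of groups: $\langle B\rangle$ need not be an essential subgroup of $H(M,B)$. For instance, with $M=\mathbb{Z}_{(p)}$ and $B=\{1\}$ one gets $H(M,B)\cong \hat{\mathbb{Z}}_p$, in which $\mathbb{Z}$ is not essential (any $\alpha\in\hat{\mathbb{Z}}_p\setminus\mathbb{Z}_{(p)}$ generates a subgroup meeting $\mathbb{Z}$ trivially); the hull is an essential extension only in the partial-homomorphism sense, equivalently in $\mcC$. So the argument must be run in $\mcC$: since $h$ fixes the generators, $h\otimes-$ restricts to the inclusion on the subobject generated by $B$, hence the kernel of the natural transformation $h\otimes-$ (which is not $(\Ker h)\otimes-$, tensoring being only right exact) meets that subobject trivially and vanishes by essentiality there; this gives that $h$ is a pure monomorphism, and then your image argument finishes as stated. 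Second, the parenthetical ``equivalently \dots assembled from the hulls of the finite subtuples of $B$'' is not an equivalence: a directed union of hulls of finite subsets is in general not pure injective (this is precisely why Lemma \ref{L: ess} of the paper produces only an elementary, not a pure injective, extension), so for infinite $B$ the envelope in $\mcC$ (or Ziegler's direct construction with pp-types) is genuinely needed. With these corrections, and granting the cited machinery, your sketch does yield the Fact.
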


\noindent Conditions~(2) and (3) of Fact~\ref{F:pi hull} ensure that the pure injective group $(H(M, B), B)$ with parameters is unique up to isomorphism over $B$; it is called the \emph{pure injective hull} of $(M,B)$ and it is denoted by $H(M,B).$ 
The relationship between $(M,B)$ and its pure injective hull rests on the following key notion.

\begin{definition}
A partial pure monomorphism $f_0 \colon (M,B) \to (N,C)$ is \emph{essential} if every partial homomorphism $h \colon (N,C) \to (K,D)$ whose restriction $h \circ f_0$ to $(M,B)$ is a partial pure monomorphism is itself a partial pure monomorphism,
$$\xymatrix@R=30pt@C=30pt{(M,B) \ar[r]^{f_0} \ar[dr]_{h \circ f_0} & (N,C) \ar[d]^h \\
 & (K,D)
}$$
%Equivalently, the positive pp-type $\pp^+ (N,C)/(M,B)$ is a maximal $(M,B)$-consistent positive pp-type (modulo logical equivalence of positive $(M,B)$-consisten pp-types). 
\end{definition}

Ziegler \cite[\S 3]{ziegler} proved that if $(M,B)$ is a group with parameters and $B \leq C \leq H(M,B),$ then the pure monomorphism $(H(M,B), B) \to (H(M,B), C)$ given by inclusion is an essential pure monomorphism. In particular, the partial pure monomorphism 
$(H(M,B), B) \subseteq (H(M,B), H(M,B))$ is essential, which implies that if $M$ is pure injective then the vertical arrow in the diagram
$$\xymatrix@R=30pt@C=30pt{(M,B) \ar[r]^-{f_0} \ar[dr]_{h \circ f_0} & (H(M,B), H(M,B)) \ar[d]^h \\
 & (M,M)
}$$
is a partial pure monomorphism and therefore a pure monomorphism $h \colon H(M,B) \to M.$ Since $H(M,B)$ is pure injective, we obtain in this way a direct summand of $M,$ which we will denote by $H^M(B),$ isomorphic to $H(M, B)$ over $B.$ If $N$ is a group and $K$ is a pure injective elementary extension of $N$, then $H^K(N)$ is the \emph{pure injective hull of $N$} and we will denote it by $H(N)$ as it does not depend on $K$. 
%It is worth emphasizing that $H^M(B)$ is only unique up to isomorphisms over $B$ even if $B$ is a pure subgroup of $M.$ 

The following fact follows from the property that for any $a \in H = H(M,B)$ the partial monomorphism $(H, B) \to (H, B + \mathbb{Z}a)$ given by the inclusion $B \leq B + \mathbb{Z}a$ is essential and therefore that the pp-type
$\pp^+ ((B + \mathbb{Z}a)/B, H)$ is maximal, see for example \cite[4.10]{prest}.

\begin{fact} \label{F:linkage}
If $1_B \colon (M,B) \to (H(M, B), B)$ is the pure injective hull of $(M,B)$ and $a \in H(M, B),$ then $B$ and $a$ are linked, i.e., there exists a pp-formula $\grf (u, \bv_\bb)$ such that $H(M) \models \grf (a,B) \wedge \neg \grf (0,B),$ or, equivalently, 
$H(M) \models \grf (a,B) \wedge \neg \grf (a, {\bf 0}).$
\end{fact}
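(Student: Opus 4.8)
The plan is to prove the contrapositive of the first assertion: if $a \in H := H(M,B)$ is \emph{not} linked to $B$, then $a = 0$ (so that the only element not linked to $B$ is $0$, for which the statement is vacuous). We may assume $B$ is a subgroup of $H$; otherwise replace $B$ by the subgroup it generates, which affects neither $H$ nor the relevant pp-types. Unwinding Definition~\ref{D:phomo} and the notion of linkage, ``$a$ is not linked to $B$'' means that for every pp-formula $\grf(u, \bv_\bb)$ with free variable $u$ and remaining variables indexed by $B$, $H \models \grf(a, B)$ implies $H \models \grf(0, B)$. From this hypothesis we will build a partial homomorphism $g_0 \colon (H, B + \bbZ a) \to (H, B)$ that fixes $B$ pointwise and sends $a$ to $0$, and then invoke Ziegler's essentiality theorem (quoted before Fact~\ref{F:linkage}) to force $g_0$ to be a partial pure monomorphism; applying this to the formula $u \doteq 0$ collapses $a$ to $0$.

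First I would construct $g_0$. Applying the non-linkage hypothesis to $\grf(u,v) := (n u \doteq v)$ shows that $na \in B$ implies $na = 0$ for all $n \in \bbZ$, so the assignment $b + na \mapsto b$ (for $b \in B$, $n \in \bbZ$) is a well-defined group homomorphism $g_0 \colon B + \bbZ a \to B \leq H$ fixing $B$ and killing $a$. To see $g_0$ is a partial homomorphism, take any pp-formula $\psi(\bu_{\bc})$ with parameters $\bc = (b_1 + n_1 a, \dots, b_k + n_k a)$ from $B + \bbZ a$; collecting the coefficients of $a$ in each conjunct, $\psi$ evaluated at $\bc$ in $H$ equals a pp-formula $\chi(u, \bv_{\bb})$ evaluated at $(a, \bb)$ with $\bb = (b_1, \dots, b_k) \in B^k$, while $\psi$ evaluated at $g_0(\bc)$ equals $\chi(0, \bb)$. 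By the hypothesis, $H \models \chi(a,\bb)$ implies $H \models \chi(0,\bb)$, hence $H \models \psi(\bc)$ implies $H \models \psi(g_0(\bc))$; thus $g_0(\pp^+(B + \bbZ a, H)) \subseteq \pp^+(B,H)$, i.e.\ $g_0$ is a partial homomorphism.

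Next, since $a \in H(M,B)$ we have the chain of subgroups $B \leq B + \bbZ a \leq H(M,B)$, so Ziegler's theorem gives that the inclusion-on-parameters map $1_B \colon (H,B) \to (H, B + \bbZ a)$ is an \emph{essential} partial pure monomorphism. The composite $g_0 \circ 1_B \colon (H,B) \to (H,B)$ is the identity partial homomorphism $1_B$ of $(H,B)$ (because $g_0$ fixes $B$), in particular a partial pure monomorphism, so essentiality forces $g_0$ itself to be a partial pure monomorphism. Applying the defining biconditional of a partial pure monomorphism to the pp-formula $u \doteq 0$ (with $u$ the variable indexing $a \in B + \bbZ a$): $H \models (g_0(a) \doteq 0)$ holds, so $H \models (a \doteq 0)$, i.e.\ $a = 0$. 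This establishes the first formulation. For the equivalence with the second formulation, one uses that $\grf(H,B)$ is a coset of the subgroup $\grf(H, {\bf 0})$: from $\grf(u,\bv_\bb)$ with $H \models \grf(a,B) \wedge \neg\grf(0,B)$ one manufactures a pp-formula $\grf'$ with $H \models \grf'(a,B) \wedge \neg \grf'(a,{\bf 0})$, and conversely; this is a routine manipulation which I would relegate to a single line.

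I expect the only non-formal point to be the verification in the second paragraph that $g_0$ is a partial homomorphism --- and even there the substance is just the observation that ``$a$ not linked to $B$'' says precisely that substituting $0$ for $a$ preserves all pp-relations with parameters in $B$; the essentiality input then does the rest. In short, I do not anticipate a genuine obstacle: the fact is essentially a repackaging of Ziegler's essentiality theorem, and the main care needed is in stating the coefficient-collecting rewriting correctly and in tracking which variable of which pp-type one substitutes into.
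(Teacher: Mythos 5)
Your proof is correct and takes essentially the same route as the paper: the paper derives the Fact from Ziegler's theorem that the inclusion $(H(M,B),B)\to (H(M,B),B+\mathbb{Z}a)$ is essential (so that $\pp^+((B+\mathbb{Z}a)/B,H)$ is maximal, citing Prest 4.10), and your argument simply unwinds that citation by constructing the collapsing partial homomorphism $g_0$ fixing $B$ and killing $a$, with essentiality then forcing $a=0$. The only caveat, which you already note, is that the statement must be read for $a\neq 0$.
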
 

The next result shows a relationship between essential maps and elementary embeddings. 

 \begin{proposition} \label{P:elem}
A pure monomorphism $f \colon M \to N$ which is essential when considered as a partial pure monomorphism is elementary.
\end{proposition}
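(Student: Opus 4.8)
The plan is to use the standard pp-elimination of quantifiers for modules: a pure monomorphism $f \colon M \to N$ is automatically elementary as soon as one knows that for every pp-pair $\psi \to \grf$ the quotient $\grf(M)/\psi(M)$ maps \emph{bijectively} onto $\grf(N)/\psi(N)$, rather than just injectively. Since $f$ is a pure monomorphism, injectivity of $\grf(M)/\psi(M) \to \grf(N)/\psi(N)$ is automatic. So the whole content is to show that, under the hypothesis that $f$ is essential (as a partial pure monomorphism $(M,M) \to (N,N)$), the induced maps on all pp-pairs are also \emph{surjective}. Given that, the conclusion follows from the pp-quantifier elimination theorem for modules (\cite[\S2.4]{prest}, specifically the criterion \cite[2.25]{prest}): two pure-embedded modules with matching invariants on all pp-pairs are elementarily equivalent over the smaller one, hence the embedding is elementary.

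For the surjectivity, suppose toward a contradiction that there is a pp-pair $\psi(\bv) \to \grf(\bv)$ and an element $\bar a \in \grf(N)$ whose coset $\bar a + \psi(N)$ is not in the image of $\grf(M)/\psi(M)$. The key step is to manufacture from this data a partial homomorphism $h \colon (N,N) \to (K,D)$ that restricts to a partial pure monomorphism on $(M,M)$ but is itself \emph{not} a partial pure monomorphism, contradicting essentiality. Concretely, I would take $K$ to be a pure-injective module (for instance a sufficiently saturated elementary extension of $N$, or the pure-injective hull of an appropriate pushout) chosen so that $\grf(K)/\psi(K)$ is "bigger" than $\grf(N)/\psi(N)$ in the right way; the candidate map $h$ should agree with (the pure embedding of) $M$ on $M$ but collapse the offending coset, i.e.\ send $\bar a$ into $\psi(K)$ while keeping $M$ pure. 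The existence of such an $h$ is exactly where one uses that $M$ is pure in $N$: the pp-type of $M$ imposes no obstruction, so one can extend the pure embedding $M \hookrightarrow K$ to a homomorphism $N \to K$ (using pure-injectivity of $K$ and Fact~\ref{F:pi hull}) that does not preserve the pp-formula $\grf$ on $\bar a$. Then $h \circ f$ is a partial pure monomorphism because it is (up to composing with the pure embedding into $K$) just the original pure embedding of $M$, while $h$ itself fails to preserve $\grf(v,\mathbf{0})$ at $\bar a$, so it is not a partial pure monomorphism — contradicting that $f$ is essential.

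The main obstacle I expect is the construction of the witnessing map $h$ with the two competing properties: it must be rigid enough on $M$ to remain a partial pure monomorphism there, yet flexible enough on the complement of $M$ in $N$ to destroy purity at $\bar a$. The clean way to arrange this is to pass to pure-injective hulls: let $g \colon M \to H(M)$ be the pure-injective hull, use pure-injectivity of $H(M)$ to extend $g$ along the pure monomorphism $f$ to some $h' \colon N \to H(M)$, and analyze when $h'$ preserves $\grf$ at $\bar a$. If it never fails, then every pp-formula realized by elements of $N$ over $M$ is already "accounted for" by $M$-cosets, which forces the surjectivity we want; if it does fail for some choice, that is our contradiction to essentiality. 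So the argument bifurcates cleanly, and in both branches we reach the desired conclusion: the invariants $|\grf(M)/\psi(M)|$ and $|\grf(N)/\psi(N)|$ coincide for all pp-pairs, hence by \cite[2.25]{prest} the pure monomorphism $f$ is elementary.
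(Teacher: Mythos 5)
There is a genuine gap: the central step you propose — that essentiality forces the induced maps $\grf(M)/\psi(M) \to \grf(N)/\psi(N)$ to be surjective, hence bijective, for every pp-pair — is false, as is your closing assertion that the invariants $|\grf(M)/\psi(M)|$ and $|\grf(N)/\psi(N)|$ literally coincide. Take $\grf(v)$ to be $v \doteq v$ and $\psi(v)$ to be $v \doteq 0$: surjectivity of $\grf(M)/\psi(M) \to \grf(N)/\psi(N)$ would say $f(M) = N$, yet proper essential pure monomorphisms are the whole point here (by Ziegler, $M \to H(M)$ is essential as a partial pure monomorphism, and it is proper whenever $M$ is not pure injective). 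For the same reason the witnessing map $h$ you try to manufacture — fixing $M$ purely while ``collapsing the offending coset'' — cannot exist in general, and the dichotomy you describe is not a proof: essentiality guarantees the failing branch never occurs, so everything hinges on what you can conclude in the non-failing branch, which you leave at the level of ``forces the surjectivity we want.'' What \cite[2.25]{prest} actually requires is only that $M$ and $N$ be elementarily equivalent, i.e.\ that the Baur--Monk invariants agree modulo infinity \cite[2.18]{prest}, not that the quotient maps be onto.

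The correct route is latent in your last paragraph and is essentially the paper's argument. Extend the hull embedding $d \colon M \to H(M)$ along the pure monomorphism $f$, using pure injectivity of $H(M)$, to get $h' \colon N \to H(M)$ with $h' \circ f = d$; since $d$ is a partial pure monomorphism, essentiality forces $h'$ to be a pure monomorphism. Now finish not with surjectivity but with the squeeze
$|\grf(M)/\psi(M)| \leq |\grf(N)/\psi(N)| \leq |\grf(H(M))/\psi(H(M))|$,
whose outer terms agree modulo infinity because $M \prec H(M)$ (Sabbagh \cite{sab}); hence all invariants of $M$ and $N$ agree modulo infinity, $M \equiv N$ by \cite[2.18]{prest}, and the pure monomorphism $f$ is elementary by \cite[2.25]{prest}. (The paper packages this slightly differently: it extends $d$ along the essential composition $M \to N \to H(N)$ to obtain a pure monomorphism $H(N) \to H(M)$ over $M$, and minimality of the pure injective hull then yields $H(N) \cong H(M)$, whence $M \equiv N$.) Either completion is fine; your draft as written stops short of, and partly contradicts, this conclusion.
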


\begin{proof} It is enough to show that $M$ and $N$ are elementarily equivalent by \cite[2.25]{prest}. To see that $M$ and $N$ are elementarily equivalent, it is enough to show that they have isomorphic pure injective hulls as pure injective hulls are elementary extensions by \cite{sab}. The composition $g \colon M \to N \to H(N)$ of $f$ with the pure injective hull of $N$ is also essential when considered as a partial homomorphism since $N \to H(N)$ is essential as a partial pure monomorphism.  Since $H(M)$ is pure injective, one can find $h$ such that the following triangle commutes
$$\xymatrix@R=30pt@C=30pt{M \ar[r]^-g \ar[dr]^-d & H(N) \ar[d]^-h \\
 & H(M)}$$
Since $g$ is pure essential, then $h$ is a pure monomorphism. This implies that the image of $h,$ which is isomorphic to $H(N)$ is a direct summand of $H(M)$ that contains $M.$ By minimality of the pure injective hull, it follows that $h$ is onto and therefore an isomorphism.
\end{proof}

An abelian group $M$ is \emph{$\Sigma$-pure injective} if $M^{(\aleph_0)}$ is pure injective. Examples of $\Sigma$-pure injective groups include $\mathbb{Z}(p^n)$ for $n \in \mathbb{Z}_{>0} \cup \{ \infty \}$.  $\Sigma$-pure injective groups are closed under finite direct sums and  under pure subgroups \cite[2.11]{prest}. Moreover, they can be decomposed uniquely as a direct sum of indecomposable pure injective groups \cite[Lemma 1]{gar}.

A group $M$ is \emph{indecomposable} if $M \neq 0$ and $M$ cannot be written as the direct sum of two non-trivial subgroups. If $M$ is an indecomposable pure injective $p$-group, then $M$ is isomorphic to $\mathbb{Z}(p^n)$ with $n \in \mathbb{Z}_{>0} \cup \{ \infty \}$.

% Moreover, every pure pure injective group has a pure injective indecomposable direct summand.

 %\begin{fact}[{\cite[\S 6.3.6]{fuchs}}]\label{F:indec}\
 %\begin{enumerate}
%\item If $M$ is an indecomposable pure injective $p$-group, then $M$ is isomorphic to $\mathbb{Z}(p^n)$ for $n \in \mathbb{N}$ or $\mathbb{Z}(p^\infty)$ for $p$ prime.
%\item If $M$ is a pure injective group, then $M$ has a pure injective indecomposable direct summand. 
%\end{enumerate}
%\end{fact} 

\subsection{A countable universal model} If $M$ is a $p$-group, then every finite subset $X \subseteq M$ generates a finite subgroup $B = \langle X \rangle \leq M$, so we will only consider subgroups as parameters in this subsection.
As $B$ is finite, the descending chain of subgroups given by
\begin{equation} 
B = B \cap M \supseteq B \cap pM \supseteq B \cap p^2M \supseteq \cdots \supseteq B \cap p^tM \supseteq \cdots
\end{equation}
is stationary, in the sense that there exists an $s$ such that $B \cap p^sM = B \cap p^tM$ for all $t \geq s.$

The next result uses similar ideas to those of \cite[1.2]{her}. 

\begin{lemma}\label{L:summnad}
Let $M$ be a $p$-group, $B \leq M$ a finite subgroup of parameters and take $s$ to be the least natural number such that for all $t \geq s,$ $B \cap p^tM = B \cap p^sM.$ If $t \geq s,$ then the cyclic group $\bbZ (p^{t +1})$ does not occur as a direct summand of
the pure injective hull $H(M,B).$
\end{lemma}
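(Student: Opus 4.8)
The plan is to analyze the structure of the pure injective hull $H(M,B)$ via its indecomposable summands, which are cyclic groups $\bbZ(p^k)$ and copies of $\bbZ(p^\infty)$ by the classification of indecomposable pure injective $p$-groups recalled above. Suppose, toward a contradiction, that $\bbZ(p^{t+1})$ is a direct summand of $H(M,B)$ for some $t \geq s$. Pick a generator $a \in H(M,B)$ of a summand isomorphic to $\bbZ(p^{t+1})$; then $a$ has order exactly $p^{t+1}$, satisfies $p^t \mid a$ is false in that summand but — more to the point — $a$ has height exactly $0$ in $H(M,B)$ (it generates a direct summand, so it is not divisible by $p$), while $p^t a$ is a nonzero socle element of height exactly $t$. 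By Fact~\ref{F:linkage}, $a$ is linked to $B$: there is a pp-formula $\grf(u, \bv_\bb)$ with $H(M,B) \models \grf(a, B) \wedge \neg\grf(0, B)$. The strategy is to use this linkage to manufacture an element of $B \cap p^{t+1}M$ that is not in $B \cap p^sM = B \cap p^tM$, contradicting stationarity.

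The key steps, in order: first, use the normal form for pp-formulae in one variable with parameters to reduce $\grf(u, B)$ to a finite conjunction of divisibility and height conditions of the form $p^k \mid u - b$ (equivalently $p^k \mid \sum z_i u_{b_i} + zu$), together with the annihilator condition coming from $a$ having order $p^{t+1}$. Since $1_B\colon (M,B)\to (H(M,B),B)$ is a partial pure isomorphism (Fact~\ref{F:pi hull}(1)), the pp-type of $B$ in $M$ and in $H(M,B)$ agree, so any pp-relation among the $b_i$ that holds in $H(M,B)$ already holds in $M$. Second, extract from the linkage formula, using that $a$ generates a $\bbZ(p^{t+1})$-summand and hence $p^{t+1}a = 0$ while $p^t a \neq 0$, a linear combination $b := \sum z_i b_i \in B$ for which $p^t a = z a'$ forces a divisibility statement on $b$: concretely, one shows $b \in B \cap p^{t+1}H(M,B)$ (because the socle element $p^t a$ of the summand must be "pulled" from $B$ through a divisibility of height at least $t+1$ once one accounts for the order jump), but $b \notin B \cap p^t H(M,B)$ (because $a$ itself has height $0$, so the linkage cannot be witnessed below height $t+1$). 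Third, transfer these two facts back to $M$ using the partial pure isomorphism: $b \in B \cap p^{t+1}M$ while $b \notin B \cap p^t M$, contradicting $t \geq s$ and the stationarity of the chain $B \cap p^j M$.

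I expect the main obstacle to be the second step: carefully choosing which linear combination of the $b_i$ to use and verifying precisely that the height of the resulting element $b$ in $H(M,B)$ is exactly $t+1$ (not less, not more), rather than merely $\geq t+1$. This requires squeezing the linkage formula $\grf$ so that the order-$p^{t+1}$ nature of $a$ and the fact that $a$ sits in a direct summand (so has height $0$ and $p^ia$ has height exactly $i$ for $0 \le i \le t$) are both used: the order condition pushes a divisibility onto $b$, and the summand/height condition prevents that divisibility from being satisfiable at a lower power. It may be cleaner to argue contrapositively at the level of pp-pairs — showing that if every $\bbZ(p^{t+1})$ were removed the pp-type of $B$ would be unchanged — but the linkage-plus-normal-form computation above is the most direct route. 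The transfer steps (first and third) are routine consequences of Fact~\ref{F:pi hull}(1) and the PID normal form for pp-formulae.
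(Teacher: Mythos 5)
Your outline has the right skeleton (argue by contradiction, invoke Fact~\ref{F:linkage}, normalize the linking formula to a single divisibility conjunct, transfer divisibility facts about elements of $B$ between $M$ and $H(M,B)$ via the partial pure isomorphism, and use stationarity), but the step you yourself flag as the main obstacle -- your ``second step'' -- is exactly where the proof lives, and as formulated it cannot be carried out. You propose to show both $b \in B \cap p^{t+1}H(M,B)$ and $b \notin B \cap p^{t}H(M,B)$ and only then bring in stationarity; but since $p^{t+1}H(M,B) \subseteq p^{t}H(M,B)$ these two statements are mutually exclusive on their face, so if both were provable you would already be done with no transfer and no use of $s$ at all, and conversely neither of the heuristic justifications you give (``the socle element $p^t a$ must be pulled from $B$ through a divisibility of height at least $t+1$'', ``the linkage cannot be witnessed below height $t+1$ because $a$ has height $0$'') is an actual derivation. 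The lemma is false without the hypothesis $t \geq s$ (take $B = M = \bbZ(p^{t+1})$), so stationarity must do real work \emph{inside} the divisibility analysis -- its role is to upgrade a low divisibility of an element of $B$ (divisibility by $p^s$) to a higher one (by $p^{t+1}$ or $p^k$), which then contradicts the linkage -- and that mechanism is absent from your sketch.

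A second, related problem is your choice of $a$ as a \emph{generator} of the $\bbZ(p^{t+1})$ summand. The argument needs an element with order $p$ and height exactly $t$, i.e.\ a nonzero element of the socle of the summand, satisfying $pa = 0 \wedge p^t \mid a \wedge p^{t+1}\nmid a$. The order-$p$ condition is what lets you normalize the linking conjunct $n \mid zu - b$ to the form $p^k \mid u - b'$ (if $p \mid z$ then $za = 0$ and the formula would link $0$ to $B$, a contradiction; if $p \nmid z$ then multiplication by $z$ is invertible on a $p$-group), and the condition $p^t \mid a$, $p^{t+1}\nmid a$ is what interacts with $B \cap p^sM = B\cap p^jM$ for $j \geq s$: in the degenerate case $n = 0$ one gets $a = b' \in B$, so $a \in B \cap p^tM = B\cap p^{t+1}M$, forcing $p^{t+1}\mid a$ in the summand, a contradiction; in the case $n = p^k r \neq 0$ one shows $p^k \mid a - b'$ and $p^k \nmid b'$, deduces $k > t \geq s$, then $p^s \mid b'$ in $M$, hence $b' \in B\cap p^sM = B \cap p^kM$, hence $p^k \mid b'$, again a contradiction. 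With a generator (height $0$, order $p^{t+1}$) neither lever is available: you cannot dismiss the case $p \mid z$, and height-$0$ information about $a$ says nothing about membership of elements of $B$ in $p^jM$; your parenthetical appeals to $p^ta$ amount to saying you would have to pass to the socle element anyway, at which point the computation you have deferred is the entire proof.
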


\begin{proof}
Assume for the sake of contradiction that there exists a direct summand $H(M, B) = \bbZ (p^{t+1}) \oplus H',$ with $t \geq s,$ and choose an element $a \neq 0$ in the socle of $\bbZ (p^{t+1}).$ Thus
$\bbZ (p^{t+1}) \models pa = 0 \wedge p^t \mid a \wedge  p^{t+1} \nmid a.$ 
By Fact~\ref{F:linkage}, $B$ and $a$ are linked, so that there is pp-formula $\grf (u, \bv)$ such that
$$H(M, B) \models \grf (a, B) \wedge \neg \grf (0, B).$$
If $\grf (u,B)$ is expressed as a conjunction, as in Equation~(\ref{Eq:conj}), then one of the conjuncts  $n_j \mid z_j u + \sum_i \; z_{ij}b_i$ already links $a$ to $B.$
Furthermore, as $B$ is a group, $b = - \sum_i \; z_{ij}b_i$ belongs to $B,$ so that $a$ is linked to $B$ by a formula of the form $\grf (u,b) = n \mid zu - b$ for $b \in B$.

If $p \mid z,$ then $H(M, B) \models za = 0,$ which would imply that $H(M, B) \models \grf (0,b),$ contradicting that $\grf$ links $a$ and $B.$ Hence $p \nmid z$. Since $M$ is a $p$-group, this implies that multiplication by $z$ on $M$ is an automorphism that fixes $B$ point-wise. If $zb' = b,$ it follows that the pp-formula 
$\rho (u,b') = n \mid u - b'$ also links $a$ and $B.$ There two cases to consider: \bigskip

\underline{Case 1:} $n = 0$. In this case, the linking formula is equivalent to $u \doteq b',$ which would imply that $a \in B,$ and therefore $a \in B \cap p^sM.$ By the choice of $s,$ $a \in B \cap p^{t+1}M.$ This would imply that $H(M, B) \models p^{t+1} \mid a$. Hence $\bbZ(p^{t+1}) \models p^{t+1} \mid a,$ contradicting that 
$\bbZ(p^{t+1}) \models p^{t+1} \nmid a.$   
\bigskip

\underline{Case 2:} $n \neq 0$. Let $p^k \vert n$ be the highest power of $p$ to divide $n$, so $n= p^kr,$ for some $r$ and $(p,r) = 1.$ Since $B$ is a $p$-group and $a$ has order $p$, one can show that $$ H(M, B) \models p^k \mid a - b' \wedge p^{k}\nmid b',$$
so $\bbZ (p^{t+1}) \models  p^k \nmid a.$ Thus $k > t \geq s$. $H(M, B) \models p^s \mid a - b'$ as $k > s$ and $H(M, B) \models p^s \mid a$ as $t \geq s$, so $H(M, B) \models p^s \mid b'$. Then $M \models p^s \mid b'$,   whence $b' \in B \cap p^sM = B \cap p^kM,$ by the choice of $s.$  This would imply that $M \models p^k \mid b'$, contradicting that $H(M, B) \models p^{k}\nmid b'$.
\end{proof}

\begin{theorem}\label{T:main}
If $M$ is a $p$-group and $B \leq M$ is a finite subgroup of parameters, then $H(M,B)$ is a finite direct sum of groups of the form $\bbZ (p^n)$ for $n \in \mathbb{Z}_{>0} \cup \{ \infty \}.$ In particular $H(M, B)$ is $\Sigma$-pure injective.
\end{theorem}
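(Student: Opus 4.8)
The plan is to reduce the structural statement about $H(M,B)$ to the decomposition theory of $\Sigma$-pure injective groups together with the bound on cyclic summands provided by Lemma~\ref{L:summnad}. First I would recall that $B$ is finite, so the pure injective hull $H(M,B)$ is in fact the pure injective hull of a finite group of parameters inside a $p$-group; by Fact~\ref{F:linkage} every element of $H(M,B)$ is linked to $B$, and a linking pp-formula, after the reductions carried out in the proof of Lemma~\ref{L:summnad}, bounds the height (and hence the order) of that element in terms of the stationary value $B\cap p^sM$. Concretely, every element of the socle of $H(M,B)$ lies in $B\cap p^tM$ for some $t\le s$ plus a bounded correction, so the socle of $H(M,B)$ has exponent dividing $p$ and, more importantly, the heights occurring in $H(M,B)$ are uniformly bounded away from the "short" cyclic summands: by Lemma~\ref{L:summnad}, $\bbZ(p^{t+1})$ is not a summand for any $t\ge s$, i.e. only $\bbZ(p^n)$ with $n\le s$ and $\bbZ(p^\infty)$ can occur as indecomposable summands.

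Next I would invoke the classification of indecomposable pure injective $p$-groups recalled just before the theorem: any indecomposable pure injective $p$-group is isomorphic to $\bbZ(p^n)$ for some $n\in\mathbb{Z}_{>0}\cup\{\infty\}$. So it remains to show two things: (i) $H(M,B)$ is a direct sum of indecomposables at all, and (ii) that direct sum is finite. For (ii), once we know $H(M,B)$ decomposes as a direct sum of copies of $\bbZ(p^n)$ with $n\le s$ or $n=\infty$, finiteness will follow from a rank/socle count: the socle of $H(M,B)$ contributes one copy of $\bbZ(p)$ for each indecomposable summand, and since $1_B\colon(M,B)\to(H(M,B),B)$ is a partial pure isomorphism, the socle of $H(M,B)$ is "pinned down" by $B$. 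Using Fact~\ref{F:linkage} again: each socle element is linked to $B$ by a formula $n\mid u-b'$ with $b'\in B$ and $n=p^k r$, $(p,r)=1$, and $k\le s$ by the argument of Case~2 of Lemma~\ref{L:summnad}; there are only finitely many such pairs $(k,\bar b')$ with $b'\in B$ modulo $p^s$, and the pp-type of a socle element over $B$ is determined by which of these link it (this is where the maximality of $\pp^+((B+\bbZ a)/B,H)$ enters, cf. the remark before Fact~\ref{F:linkage}). Hence $H(M,B)$ has only finitely many pp-types of socle elements, so finitely many indecomposable summands.

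For step (i) — that $H(M,B)$ decomposes as a direct sum of indecomposables in the first place — the cleanest route is to avoid proving this directly and instead bootstrap: the bound from Lemma~\ref{L:summnad} shows $p^{s+1}$ annihilates the torsion part of $H(M,B)$ modulo its divisible part $D\cong\bbZ(p^\infty)^{(\kappa)}$. A bounded-exponent pure injective $p$-group is a direct sum of finite cyclic $p$-groups (it is pure injective of bounded exponent, hence algebraically compact of bounded order, hence a direct sum of cyclics by the classical structure theory, e.g. \cite[Ch.\ 40]{fuchs} or \cite[4.35]{prest}), and a divisible pure injective $p$-group is a direct sum of copies of $\bbZ(p^\infty)$. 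Writing $H(M,B)=D\oplus R$ with $R$ reduced of exponent $\le p^{s+1}$, both pieces decompose into indecomposables, and the finiteness argument of the previous paragraph caps the number of summands of each kind. The main obstacle is precisely this step~(i): one must argue carefully that $H(M,B)$ really has bounded exponent modulo its divisible part, which requires squeezing the height bound out of Lemma~\ref{L:summnad} applied to \emph{every} potential cyclic summand simultaneously, not just one at a time — but since Lemma~\ref{L:summnad} is stated uniformly for all $t\ge s$, this goes through, and the rest is assembling the standard structure theorems.
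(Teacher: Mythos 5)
There are two genuine gaps, one in each of your steps (i) and (ii). For (i), Lemma~\ref{L:summnad} only forbids the \emph{finite cyclic} groups $\bbZ(p^{t+1})$, $t\ge s$, from occurring as direct summands; it does not by itself yield that $H(M,B)$ is torsion, nor that its reduced part has bounded exponent. This matters because the pure injective hull of a $p$-group need not be a $p$-group: the indecomposable pure injective $\bbZ$-modules also include the $p$-adic integers and $\mathbb{Q}$, and e.g.\ the hull of $\bigoplus_n \bbZ(p^n)$ has a nonzero torsion-free reduced part. So when you write $H(M,B)=D\oplus R$ with $R$ reduced of exponent $\le p^{s+1}$, you are assuming exactly what has to be proved (no reduced torsion-free summands, and boundedness of the torsion), and deducing it from ``no large cyclic summands'' already requires the structure theory of algebraically compact groups that you set out to bypass. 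The paper avoids this by quoting Ziegler's decomposition theorem (\cite[6.1]{ziegler}, together with \cite[\S 6.3.6]{fuchs}) to write $H(M,B)\cong H(\bigoplus_{i\in I}M_i)$ with each $M_i$ indecomposable pure injective, using Lemma~\ref{L:summnad} (and the $p$-group hypothesis) to see that only $\bbZ(p^k)$ with $k\le t$ and $\bbZ(p^\infty)$ occur, and then removing the outer hull because a direct sum of copies of finitely many $\Sigma$-pure injective groups is already pure injective.

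For (ii), the inference ``only finitely many pp-types of socle elements over $B$, hence only finitely many indecomposable summands'' is a non sequitur: a type can be realized by socle elements of infinitely many distinct summands (already $\bbZ(p)^{(\aleph_0)}$ realizes only finitely many pp-types over a finite parameter set while having infinitely many indecomposable summands), so counting types does not bound multiplicities. Moreover the bound $k\le s$ you extract from Case~2 of Lemma~\ref{L:summnad} was derived there under the hypothesis that the socle element has height $t\ge s$, which fails for the small summands you now allow. What actually forces finiteness is the minimality of the hull, which is the argument the paper uses: since $B$ is finite, $B$ lies in a finite subsum $N$ of the decomposition; $N$ is pure injective, the composite $H(M,B)\to N\hookrightarrow H(M,B)$ of the projection with the inclusion fixes $B$ pointwise, so by Fact~\ref{F:pi hull}(3) it is an automorphism and $N=H(M,B)$. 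Your appeal to the maximality of $\pp^+((B+\mathbb{Z}a)/B,H)$ points in the right direction, but as written it is never converted into an argument that distinct summands carry genuinely distinct (indeed, jointly irredundant) types over $B$, so the finiteness step does not close.
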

\begin{proof}
Since $H(M,B)$ is pure injective, it follows that $H(M,B) \cong H( \bigoplus_{i \in  I} M_i)$ such that $M_i$ is a pure injective indecomposable group for every $i \in I$ by \cite[6.1]{ziegler} and \cite[\S 6.3.6]{fuchs}. Moreover, since $M$ is a $p$-group, it follows from Lemma \ref{L:summnad} that  
$$ H(M,B) \cong H( \mathbb{Z}(p)^{(\lambda_1)} \oplus  \mathbb{Z}(p^2)^{(\lambda_2)} \oplus \cdots \oplus  \mathbb{Z}(p^t)^{(\lambda_t)}  \oplus  \mathbb{Z}(p^\infty)^{(\lambda_\infty)} )$$
for $t \in \mathbb{N}$ and $\lambda_i$ a cardinal (possibly finite) for every $i \in \{ 1, \cdots, t, \infty\}$.

Using that pure injective groups are closed under finite direct sums and that $\mathbb{Z}(p^n)$ for $n \in \mathbb{Z}_{>0} \cup \{ \infty \}$ are $\Sigma$-pure injective, it follows that:

$$ H(M,B) \cong \mathbb{Z}(p)^{(\lambda_1)} \oplus  \mathbb{Z}(p^2)^{(\lambda_2)} \oplus \cdots \oplus  \mathbb{Z}(p^t)^{(\lambda_t)}  \oplus \mathbb{Z}(p^\infty)^{(\lambda_\infty)}. $$

Therefore, from the minimality of $H(M, B)$ and the finiteness of  $B$, we have that: 
 
 $$ H(M,B) \cong \mathbb{Z}(p)^{n_1} \oplus  \mathbb{Z}(p^2)^{n_2} \oplus \cdots \oplus  \mathbb{Z}(p^t)^{n_t}  \oplus  Z(p^\infty)^{n_\infty} $$
 for $t \in \mathbb{N}$ and $n_i \in \mathbb{N}$ for every $i \in \{ 1, \cdots, t, \infty\}$.

 Since a finite direct sum of $\Sigma$-pure injectives is $\Sigma$-pure injective, it follows that $H(M, B)$ is $\Sigma$-pure injective.
  \end{proof}

\begin{lemma}\label{L: ess}
Every countable $p$-group admits an essential pure, and therefore elementary, monomorphism into a countable direct sum of groups of the form $\mathbb{Z} (p^n)$ for $n \in \mathbb{Z}_{>0} \cup \{ \infty \}.$
\end{lemma}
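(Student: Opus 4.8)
The plan is to exhaust a given countable $p$-group $M$ by an increasing chain of finite subgroups and assemble the desired embedding as a colimit, using the pure injective hull of each finite piece as a building block. First I would write $M = \bigcup_{k < \omega} B_k$ where $B_0 = 0$ and each $B_k \leq B_{k+1}$ is a finite subgroup (possible since $M$ is countable and $p$-torsion, so every finitely generated subgroup is finite). By Fact~\ref{F:pi hull}, the inclusion $1_{B_k} \colon (M, B_k) \to (H(M, B_k), B_k)$ is a partial pure isomorphism, and by Theorem~\ref{T:main} each $H(M, B_k)$ is a \emph{finite} direct sum of cyclic groups $\bbZ(p^n)$ and copies of $\bbZ(p^\infty)$. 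Recall also from the discussion after Fact~\ref{F:pi hull} that $H^M(B_k)$ is realized as a direct summand of any pure injective elementary extension of $M$, but here I only need the abstract hulls together with the compatibility coming from the inclusions $B_k \leq B_{k+1}$.

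The key step is to build a coherent system. For each $k$, the composite $(M, B_k) \to (M, B_{k+1}) \xrightarrow{1_{B_{k+1}}} (H(M, B_{k+1}), B_{k+1})$ is a partial homomorphism from $(M, B_k)$ into a pure injective group, so by Fact~\ref{F:pi hull}(2) it factors through a homomorphism $g_k \colon (H(M, B_k), B_k) \to (H(M, B_{k+1}), B_{k+1})$ of groups with parameters. Because $1_{B_k} \colon (M, B_k) \to (H(M, B_k), B_k)$ is a partial pure isomorphism and the same holds at level $k+1$, and because the map $(H(M,B_{k+1}), B_k) \to (H(M,B_{k+1}), B_{k+1})$ is an essential pure monomorphism by Ziegler, one checks that $g_k$ is an essential pure monomorphism: its restriction to $B_k$ is a partial pure monomorphism, hence by essentiality of $(H(M,B_k),B_k) \subseteq (H(M,B_k), H(M,B_k))$ the map $g_k$ itself must be a pure monomorphism, and essentiality is inherited along the tower. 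Taking the colimit $N := \varinjlim_k H(M, B_k)$ along the $g_k$'s, and using that the canonical maps $M \cap B_k = B_k \to H(M, B_k) \to N$ glue to a map $M \to N$, I obtain a monomorphism $M \to N$. Each $B_k \hookrightarrow N$ factors through $H(M,B_k) \hookrightarrow N$ and is pure there (partial pure isomorphism), and purity is preserved under the directed colimit, so $M \to N$ is a pure monomorphism; since at each finite stage the map $H(M, B_k) \to H(M, B_{k+1})$ is essential, the composite $M \to N$ is essential as a partial pure monomorphism. Finally $N$ is a directed union of finite direct sums of groups $\bbZ(p^n)$, $n \in \mathbb{Z}_{>0} \cup \{\infty\}$; a standard argument (each cyclic or Pr\"ufer summand that appears at some stage maps isomorphically onto its image, and the colimit of such a system is again a direct sum of the same type) shows $N$ is a countable direct sum of groups $\bbZ(p^n)$, $n \in \mathbb{Z}_{>0} \cup \{\infty\}$. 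By Proposition~\ref{P:elem}, an essential pure monomorphism is elementary, giving the ``and therefore elementary'' clause.

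\textbf{Main obstacle.} The delicate point is ensuring that the colimit $N$ is genuinely a \emph{direct sum} of the groups $\bbZ(p^n)$ and not merely a directed union with possibly complicated transition maps; I expect to need the fact that $g_k$ is essential (not just pure) to conclude that on each indecomposable summand the map is either an isomorphism onto an indecomposable summand of $H(M,B_{k+1})$ or embeds $\bbZ(p^n)$ into $\bbZ(p^m)$ or $\bbZ(p^\infty)$ as a pure (hence direct) summand, so that the colimit stabilizes summand-by-summand. Equivalently, one can sidestep this by embedding the whole tower into the universal group $\bigoplus_{n=1}^\infty \bbZ(p^n)^{(\aleph_0)} \oplus \bbZ(p^\infty)^{(\aleph_0)}$ and taking $N$ to be the purely-generated countable subgroup containing the image of $M$; controlling essentiality of the resulting map is then the remaining technical care. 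The rest of the argument is bookkeeping with the functoriality of pp-types and the universal properties in Fact~\ref{F:pi hull}.
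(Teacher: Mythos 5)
Your overall architecture — exhausting $M$ by finite subgroups $B_k$, mapping the hulls $H(M,B_k)$ into one another via Fact~\ref{F:pi hull}(2), passing to the direct limit, and invoking Theorem~\ref{T:main} and Proposition~\ref{P:elem} — is exactly the paper's strategy, and your derivation that each transition map $g_k$ is a \emph{pure monomorphism} is correct. The gap is in how you get essentiality of the composite $M \to N$. You claim that each $g_k \colon H(M,B_k) \to H(M,B_{k+1})$ is an \emph{essential} pure monomorphism and that ``essentiality is inherited along the tower.'' The first claim is false in general: essentiality of $g_k$ would mean that any partial homomorphism out of $(H(M,B_{k+1}), H(M,B_{k+1}))$ which is a partial pure monomorphism on the image of $g_k$ is itself a partial pure monomorphism; but the image of $g_k$ contains only $B_k$, not $B_{k+1}$, so Ziegler's essentiality of the hull over its parameter set does not apply. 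Concretely, with $B_0 = 0$ one has $H(M,B_0) = 0$, and $g_0$ is essential only if $H(M,B_1) = 0$. Second, even granting essentiality of the $g_k$, essentiality of $M \to N$ would not follow by passing to the colimit: $M$ does not factor through any stage $H(M,B_k)$ (only the finite subgroup $B_k$ does), so a statement about the tower of hulls says nothing about a test map restricted to $M$.

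What is actually needed is to test essentiality directly against the parameter sets: given a partial homomorphism $g \colon (N,N) \to (K,C)$ such that $g \circ h$ is a partial pure monomorphism, note that for each $k$ the map $g \circ \ell_k$ agrees with $g \circ h$ on $B_k$, hence its restriction to $(H(M,B_k), B_k)$ is a partial pure monomorphism; Ziegler's result that $(H(M,B_k), B_k) \subseteq (H(M,B_k), H(M,B_k))$ is essential then upgrades this to all of $H(M,B_k)$, and since $N = \bigcup_k \ell_k[H(M,B_k)]$ with each $\ell_k$ pure, $g$ is a partial pure monomorphism. This is the paper's argument, and it is where the finiteness of the $B_k$ and Theorem~\ref{T:main} enter. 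Concerning your ``main obstacle'': the direct-sum structure of $N$ is obtained more cleanly than your summand-by-summand sketch, namely $\ell_k[H(M,B_k)]$ is pure injective and pure in $\ell_{k+1}[H(M,B_{k+1})]$, hence a direct summand whose complement is again $\Sigma$-pure injective by Theorem~\ref{T:main}, and iterating writes $N$ as a countable direct sum of groups $\mathbb{Z}(p^n)$, $n \in \mathbb{Z}_{>0} \cup \{\infty\}$. Your proposed shortcut of embedding the tower into $\bigoplus_{n=1}^\infty \mathbb{Z}(p^n)^{(\aleph_0)} \oplus \mathbb{Z}(p^\infty)^{(\aleph_0)}$ should be avoided: a pure embedding of $M$ into that group is essentially what this lemma is meant to produce, and restricting to a ``purely-generated countable subgroup'' would still require an essentiality argument of the above form.
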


\begin{proof}
Let $M$ be a countable $p$-group. Then $M$ is the union of a totally ordered countable set of finite subgroups $$B_0 \subseteq B_1 \subseteq \cdots \subseteq B_n \subseteq \cdots \subseteq M = \bigcup_{n < \omega} \; B_n.$$

One can build $\{ e_n \; \mid \;  n < \omega \}$ such that:

\begin{enumerate}
\item For every $n$, $e_n: H(M, B_n) \to H(M, B_{n+1})$ is a pure monomorphism.
\item For every $n$, the following diagram commutes $$\xymatrix@C=35pt@R=35pt{(M, B_n) \ar[r]^-{\iota_n} \ar[d]^{1_{B_n}} & (M, B_{n+1}) \ar[d]^{1_{B_{n+1}}} \\
(H(M, B_n), B_n) \ar@{>}[r]^-{(e_n, 1_{B_{n+1} \circ \iota_n })} & (H(M,B_{n+1}), B_{n+1})}$$ 
\end{enumerate}

Let $n < \omega$. Since $1_{B_{n+1}} \circ \iota_n: (M, B_n) \to (H(M,B_{n+1}), B_{n+1})$ is a partial homomorphism there is $(e_n, 1_{B_{n+1}} \circ \iota_n)$ making the diagram of (2) above commute by the universal property of the pure injective hull, Fact \ref{F:pi hull}.(2). Moreover, $e_n$ is a pure monomorphism because the composition $1_{B_{n+1}} \circ \iota_n$ is a partial pure monomorphism.

Consider the directed system $( \{H(M,B_n)\}_{n < \omega}, \{ e_{n,m}: H(M,B_n) \to H(M,B_m) \; \mid \;  n \leq m \})$  where $e_{n,m} = 1_{ H(M,B_n)}$ if $m =n$ and $e_{n,m} = e_{m - 1} \circ \cdots \circ e_n$ if $m > n$. Let $( L, \{ \ell_n \}_{n < \omega} )$ be the directed limit. It is known that for every $n < \omega$, $\ell_n :  H(M,B_n) \to L$ is a pure monomorphism and that $L = \bigcup_{n < \omega} \; \ell_n[ H(M,B_n)]$.

Let $h: M \to L$ be given by $h(b)=\ell_n(b)$ if $b \in B_n$. Using the commutativity of the diagram of (2), it is easy to show that $h$ is a well-defined monomorphism.  Moreover, since the $\ell_n$ maps are pure monomorphisms, it follows that $h$ is a pure monomorphism.

 Let us show that $h$ is essential. Let $g \colon (L,L) \to (N,C)$ be a partial homomorphism such that $g \circ h : (M,M) \to (N, C)$ is a partial pure monomorphism.  We show first that for every $n < \omega$, $g \circ \ell_n:  (H(M, B_n), H(M, B_n)) \to (N,C)$ is a partial pure monomorphism. Since $(H(M, B_n), B_n) \to_{1_{B_n}} (H(M, B_n), H(M, B_n))$ is essential, it is enough to show that $g \circ \ell_n \circ 1_{B_n}: (H(M, B_n), B_n) \to (N, C)$ is a partial pure monomorphism. Let $\bb \in B_n$ and $\grf(\bx)$ be a pp-formula. Using first that $1_{B_n}: (M, B_n) \to (H(M, B_n), B_n)$ is a partial pure monomorphism, then  that $g \circ h$ is a partial pure monomorphism and lastly that $h(\bb)= \ell_n(\bb)$, one gets that: 
 
 \[H(M, B_n) \models \grf (\bb) \mbox{ if and only if } M \models \grf (\bb) \mbox{ if and only if } N \models \grf(g\circ h(\bb)) \mbox{ if and only if }  N \models \grf(g\circ \ell_n(\bb)).\]

 Therefore, $g \circ \ell_n :  (H(M, B_n), H(M, B_n)) \to (N,C)$ is a partial pure monomorphism for every $n < \omega$. Since $L = \bigcup_{n < \omega} \; \ell_n[ H(M,B_n)]$ and each $\ell_n$ is a pure monomorphism, it follows that $g \colon (L,L) \to (N,C)$ is a partial pure monomorphism. 

Therefore, the embedding $h \colon M \to L$ is elementary by Proposition~\ref{P:elem}.

To see that the limit $L$ is a direct sum of indecomposable pure injective $p$-groups, let $H'_{n}= \ell_n[ H(M,B_n)]$. For every $n,$ there is a decomposition $H'_{n+1} = H'_n \oplus L_{n +1},$ where $L_{n +1 }$ is a $\Sigma$-pure injective as $H'_{n+1}$ is $\Sigma$-pure injective by Theorem \ref{T:main}. Then it follows from $L = \bigcup_{n < \omega } \; H'_n$ that 
$$L = H'_0 \oplus  \bigoplus_{n=1}^\infty L_n $$
$H'_0$ is a direct sum of $\mathbb{Z} (p^n)$ with $n \in \mathbb{Z}_{>0} \cup \{ \infty \}$ by Theorem \ref{T:main} and $L_i$ is a direct sum of $\mathbb{Z} (p^n),$ with $n \in \mathbb{Z}_{>0} \cup \{ \infty \}$, because $L_i$ is a $\Sigma$-pure injective and hence a direct sum of indecomposable pure injective $p$-groups. Since $L$ is countable we only use countably many direct summands.  
\end{proof}

We are now ready to present the main result of the paper.

\begin{theorem}\label{main:t}
$\bigoplus_{n =1}^\infty \bbZ (p^n)^{(\aleph_0)} \oplus \bbZ (p^\infty)^{(\aleph_0)}$ is a countable universal  $p$-group for purity. 
\end{theorem}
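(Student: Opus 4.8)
The plan is to combine Lemma~\ref{L: ess} with the universal property of the pure injective hull to embed an arbitrary countable $p$-group as a pure subgroup of a direct summand of $U := \bigoplus_{n=1}^\infty \bbZ(p^n)^{(\aleph_0)} \oplus \bbZ(p^\infty)^{(\aleph_0)}$. First I would note that $U$ is a countable $p$-group, so it qualifies as a candidate. Now let $M$ be an arbitrary countable $p$-group. By Lemma~\ref{L: ess} there is an essential pure (hence elementary) monomorphism $h \colon M \to L$, where $L = \bigoplus_{j} \bbZ(p^{n_j})$ is a countable direct sum of indecomposable pure injective $p$-groups, $n_j \in \bbZ_{>0} \cup \{\infty\}$.

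The key point is that such an $L$ purely embeds into $U$ \emph{as a direct summand}. Indeed, write $L \cong \bigoplus_{n=1}^\infty \bbZ(p^n)^{(\beta_n)} \oplus \bbZ(p^\infty)^{(\beta_\infty)}$ with each $\beta_n \leq \aleph_0$; then $U \cong L \oplus \bigoplus_{n=1}^\infty \bbZ(p^n)^{(\aleph_0)} \oplus \bbZ(p^\infty)^{(\aleph_0)}$ since $\beta_n + \aleph_0 = \aleph_0$ for each $n$. In particular the inclusion $L \hookrightarrow U$ is a (split, hence pure) monomorphism. Composing, $M \xrightarrow{h} L \hookrightarrow U$ is a composition of pure monomorphisms and therefore a pure monomorphism. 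This shows every countable $p$-group purely embeds in $U$, and since $U$ is itself a countable $p$-group, $U$ is a countable universal $p$-group for purity, which is exactly the statement (and matches the Main Theorem stated in the introduction).

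I do not expect a serious obstacle here: all the real work has been done in Lemma~\ref{L:summnad}, Theorem~\ref{T:main}, and Lemma~\ref{L: ess}, which together guarantee that the ambient target of the essential embedding has the very specific form $\bigoplus \bbZ(p^n)$ with at most countably many summands of each isomorphism type. The only thing to be careful about is the cardinal arithmetic $\beta_n + \aleph_0 = \aleph_0$ (allowing $\beta_n$ finite or $\aleph_0$) to see that absorbing $L$ into $U$ leaves $U$ unchanged up to isomorphism, and the elementary fact that a split monomorphism is pure and that a composition of pure monomorphisms is pure. If one wants the slightly stronger phrasing advertised in the introduction — a pure \emph{essential} monomorphism into a direct \emph{summand} of $U$ — one takes the summand to be the copy of $L$ inside $U$ and invokes the essentiality already established in Lemma~\ref{L: ess}.
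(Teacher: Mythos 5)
Your proposal is correct and follows essentially the same route as the paper: invoke Lemma~\ref{L: ess} to obtain a pure (essential, elementary) embedding of the countable $p$-group into a countable direct sum of groups $\bbZ(p^n)$, $n \in \mathbb{Z}_{>0} \cup \{\infty\}$, and then observe that any such sum sits inside $\bigoplus_{n=1}^\infty \bbZ(p^n)^{(\aleph_0)} \oplus \bbZ(p^\infty)^{(\aleph_0)}$ as a (split, hence pure) summand. Your extra remarks on the cardinal arithmetic and on recovering the stronger ``direct summand'' statement are exactly what the paper records separately in Corollary~\ref{C: sum-st}.
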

\begin{proof} Let $M$ be a countable $p$-group. Then $M$ is a pure subgroup of a countable direct sum of groups of the form $\mathbb{Z} (p^n)$ for $n \in \mathbb{Z}_{>0} \cup \{ \infty \}$ by Lemma \ref{L: ess}. Hence a pure subgroup of $\bigoplus_{n =1}^\infty \bbZ (p^n)^{(\aleph_0)} \oplus \bbZ (p^\infty)^{(\aleph_0)}$. \end{proof}

%Let $T = \Th(\bigoplus_{n =1}^\infty \bbZ (p^n)^{(\aleph_0)} \oplus \bbZ (p^\infty)^{(\aleph_0)})$ and $U$ be the group obtained by applying Theorem \ref{T:T-universal torsion} to $T$.  It follows from Theorem \ref{T:T-universal torsion} that $U$ is the direct sum of indecomposable pure injective $p$-groups. Then we have that $U = \bigoplus_{n =1}^\infty \bbZ (p^n)^{(\alpha_n)} \oplus \bbZ (p^\infty)^{(\alpha_0)} $ for $\alpha_1, \cdots, \alpha_\infty$ countable cardinals. Since $\bigoplus_{n =1}^\infty \bbZ (p^n)^{(\aleph_0)} \oplus \bbZ (p^\infty)^{(\aleph_0)}$ is a pure subgroup of $U$, it follows that $U = \bigoplus_{n =1}^\infty \bbZ (p^n)^{(\aleph_0)} \oplus \bbZ (p^\infty)^{(\aleph_0)}$. 

%Let $M$ be a countable torsion $p$-group. Since $T$ is the ''biggest''\footnote{M: reference?} complete theory of abelian $p$-groups $M \oplus U \models T$, let $f: M \to M \oplus U$ be the inclusion which is clearly a pure embedding. Moreover, since every countable model of $T$ can be elementarily embedded to $U$ there is $g: M \oplus U \to U$ a pure embedding. Therefore,  $g \circ f: M \to U=\bigoplus_{n =1}^\infty \bbZ (p^n)^{(\aleph_0)} \oplus \bbZ (p^\infty)^{(\aleph_0)}$ is a pure embedding. 

 Observe that Lemma \ref{L: ess} can be used to obtain the following stronger result.
 
 \begin{corollary}\label{C: sum-st}
 Every countable $p$-group can be elementarily embedded into a direct summand of $$\bigoplus_{n =1}^\infty \bbZ (p^n)^{(\aleph_0)} \oplus \bbZ (p^\infty)^{(\aleph_0)}$$.
 \end{corollary}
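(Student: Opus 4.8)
The plan is to deduce Corollary \ref{C: sum-st} from Lemma \ref{L: ess} together with the uniqueness of pure injective hulls of $\Sigma$-pure injective groups. Given a countable $p$-group $M$, Lemma \ref{L: ess} produces an essential pure (hence elementary) monomorphism $h \colon M \to L$, where $L = H'_0 \oplus \bigoplus_{n=1}^{\infty} L_n$ is a countable direct sum of indecomposable pure injective $p$-groups, i.e., $L \cong \bigoplus_{n=1}^{\infty} \bbZ(p^n)^{(\beta_n)} \oplus \bbZ(p^\infty)^{(\beta_\infty)}$ for cardinals $\beta_n$ that are countable (possibly finite). So it suffices to show that any such countable $L$ is isomorphic to a direct summand of $\bigoplus_{n=1}^\infty \bbZ(p^n)^{(\aleph_0)} \oplus \bbZ(p^\infty)^{(\aleph_0)}$; composing the elementary embedding $h$ with this split inclusion gives the desired elementary embedding of $M$ into a direct summand.

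First I would observe that $L$ is $\Sigma$-pure injective (being a countable direct sum of the $\Sigma$-pure injective groups $\bbZ(p^n)$, $n \in \mathbb{Z}_{>0} \cup \{\infty\}$, and such sums remain $\Sigma$-pure injective — this is the standard fact that a direct sum of copies of finitely many $\Sigma$-pure injectives is again $\Sigma$-pure injective, as used in the proof of Theorem \ref{T:main}). Write $G := \bigoplus_{n=1}^\infty \bbZ(p^n)^{(\aleph_0)} \oplus \bbZ(p^\infty)^{(\aleph_0)}$. Since for each $n$ we have $\beta_n \leq \aleph_0$, there is a direct sum decomposition $G \cong L \oplus L'$, where $L' = \bigoplus_{n=1}^\infty \bbZ(p^n)^{(\aleph_0 - \beta_n)} \oplus \bbZ(p^\infty)^{(\aleph_0 - \beta_\infty)}$ (using that $\aleph_0$ absorbs any countable cardinal, so $\beta_n + \aleph_0 = \aleph_0$, and splitting off $\beta_n$ copies of $\bbZ(p^n)$ from the $\aleph_0$ copies in $G$). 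Hence $L$ is literally a direct summand of $G$, and the canonical split inclusion $L \hookrightarrow G$ is an elementary monomorphism onto a direct summand (a split inclusion of a direct summand is always pure, and both $L$ and $G$ are $\Sigma$-pure injective elementary... in fact one needs only that the inclusion of a direct summand is split, hence pure; but elementarity of the inclusion onto the summand is immediate since it is an isomorphism onto $L$). Composing, $M \xrightarrow{h} L \hookrightarrow G$ is an elementary embedding of $M$ into the direct summand $L$ of $G$.

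The only step requiring care is the bookkeeping of cardinal arithmetic in the decomposition $G \cong L \oplus L'$: one must check that the multiplicities $\beta_n$ extracted from $L$ in Lemma \ref{L: ess} are indeed $\leq \aleph_0$ for each index (which follows since $L$ is countable), so that subtracting them from the $\aleph_0$ copies in $G$ leaves a well-defined (possibly $\aleph_0$, possibly finite) remainder and yields $G = L \oplus L'$ on the nose. I do not expect any genuine obstacle here — the corollary is a routine strengthening once Lemma \ref{L: ess} is in hand, the point being simply that the target group $L$ of the essential embedding is a direct summand of $G$ rather than merely a pure subgroup. Indeed this is exactly the gain over Theorem \ref{main:t}, where only the pure-subgroup conclusion was recorded.
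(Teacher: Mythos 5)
Your proposal is correct and follows the same route as the paper, which presents this corollary as an immediate observation from Lemma \ref{L: ess}: the essential pure (hence elementary) monomorphism lands in a countable direct sum $L$ of groups $\bbZ(p^n)$, $n \in \mathbb{Z}_{>0}\cup\{\infty\}$, and such an $L$ is itself a direct summand of $\bigoplus_{n=1}^\infty \bbZ(p^n)^{(\aleph_0)} \oplus \bbZ(p^\infty)^{(\aleph_0)}$ since all multiplicities are countable. Your cardinal bookkeeping (taking $\aleph_0$ copies in the complement rather than a literal "$\aleph_0-\beta_n$") is exactly the needed observation, and the conclusion asks only that the embedding into the summand $L$ be elementary, which Lemma \ref{L: ess} supplies.
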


The following result follows directly from the above theorem and the fact that every torsion group can be decomposed into the direct sum of its $p$-subgroups.

\begin{corollary}
$\bigoplus_{p \text{\hspace{0.02cm} prime}} \; \left( \bigoplus_{n =1}^\infty \bbZ (p^n)^{(\aleph_0)} \oplus \bbZ (p^\infty)^{(\aleph_0)}\right)$ is a countable universal torsion group for purity. 
\end{corollary}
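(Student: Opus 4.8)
The plan is to reduce the torsion case to the $p$-group case treated in Theorem~\ref{main:t} by using the canonical primary decomposition recalled in the Preliminaries. First I would let $T$ be an arbitrary countable torsion group. By the decomposition $T = \bigoplus_p t_p(T)$ of a torsion group into its $p$-primary components, each summand $t_p(T)$ is a countable abelian $p$-group (a subgroup of $T$), so by Theorem~\ref{main:t} there is, for each prime $p$, a pure monomorphism
$$f_p \colon t_p(T) \longrightarrow U_p := \bigoplus_{n=1}^\infty \bbZ (p^n)^{(\aleph_0)} \oplus \bbZ (p^\infty)^{(\aleph_0)}.$$
Assembling these, I would set $f := \bigoplus_p f_p \colon T = \bigoplus_p t_p(T) \to \bigoplus_p U_p$, which is clearly a group monomorphism, and whose codomain $\bigoplus_{p \text{ prime}} U_p$ is a countable torsion group (a countable direct sum of countable groups), so in particular it lies in the class under consideration.

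The remaining point is that $f$ is a \emph{pure} monomorphism. Here I would use Pr\"ufer's criterion (equivalent to purity by the Fundamental Theorem, as noted in the Preliminaries): it suffices to check that $z\left(\bigoplus_p U_p\right) \cap f\left(\bigoplus_p t_p(T)\right) = f\left(z \bigoplus_p t_p(T)\right)$ for every $z \in \bbZ$. Given $x = (x_p)_p \in \bigoplus_p t_p(T)$ with $f(x) = z\,(m_p)_p$ for some $(m_p)_p \in \bigoplus_p U_p$, componentwise we get $f_p(x_p) = z m_p$ for each $p$; purity of $f_p$ gives $y_p \in t_p(T)$ with $x_p = z y_p$, and since $x$ has finite support we may take $y_p = 0$ wherever $x_p = 0$, so $y := (y_p)_p$ lies in $\bigoplus_p t_p(T)$ and $x = zy$. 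This shows $f$ is pure.

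I expect the only thing requiring care — the ``main obstacle,'' although it is routine — is exactly this last verification that a direct sum of pure embeddings of $p$-groups indexed by the primes remains pure when amalgamated into a direct sum of torsion groups; the argument hinges on the finiteness of supports of elements of a direct sum, which is what makes the preimage $y$ land back in the direct sum rather than merely in the direct product. No set-theoretic hypotheses or deeper model theory are needed: the corollary is a purely formal consequence of Theorem~\ref{main:t} and the primary decomposition of torsion groups.
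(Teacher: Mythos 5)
Your proposal is correct and follows essentially the same route as the paper, which deduces the corollary directly from Theorem~\ref{main:t} together with the primary decomposition $T=\bigoplus_p t_p(T)$; you merely spell out the routine verification (via Pr\"ufer's criterion and finiteness of supports) that the direct sum of the pure embeddings $f_p$ is again pure, a step the paper leaves implicit.
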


The existence of a universal countable torsion abelian group for purity dispels the misconception aired in~\cite[p.\ 297]{sh1151} that a pure monomorphism of torsion abelian groups necessarily preserves the reduced part. The following example provides a concrete illustration of how this may fail to be the case.

\begin{example}\label{E:1}
Let $H_{\omega + 1}$ be the generalized Pr\"{u}fer group generated by $\{a_n : n < \omega \}$ subject to $pa_0 = 0$ and $p^n a_n = a_0$ for $n \geq 1$. Let $G = \bigoplus_{n = 1}^{\infty} \mathbb{Z}(p^n) \oplus \bbZ(p^{\infty})$ where $\bbZ(p^{\infty})$ is generated by  $\{b_n : n < \omega \}$ subject to $pb_0 = 0$ and $pb_{n+1}=b_n$ for $n \geq 0$. 
If $f: H_{\omega + 1} \to G$ is given by $a_0 \mapsto (0, \cdots, b_0)$ and $a_n \mapsto (0, \cdots, 0, 1, 0, \cdots, b_n)$ for $n \geq 1$ where the $1$ is in the $n^{th}$-entry of  $\bigoplus_{n = 1}^{\infty} \mathbb{Z}(p^n)$, then $f$ is a pure monomorphism such that the image of the reduced part of $H_{\omega + 1}$ is not contained in the reduced part of $G$.
\end{example}
\begin{proof}[Proof sketch]
That the image of the reduced part of $H_{\omega + 1}$ is not contained in the reduced part of $G$ is clear as  $H_{\omega + 1}$ is reduced and $f(a_0) \in \bbZ(p^{\infty})$. 

It is easy to see that $f$ is a monomorphism so we show it is pure. Assume that $$G \vDash p^k | (n_1, \cdots, n_m, 0, \cdots, n_0 b_0 + \cdots + n_m b_m)$$ for $n_0, \cdots, n_m \in \mathbb{Z}$ and $k \geq 1$. By switching $n_0$ for some $n^*$ we may assume that $0 \leq n_i < p^i$ for every $1 \leq i \leq m$. For every $1 \leq i \leq m$, since $p^k| n_i$ (mod $p^i$) and $0 \leq n_i < p^i$, it follows that there is $r_i \in \mathbb{N}$ such that $p^kr_i = n_i$. 

Let $y = (r_1, 0,  \cdots, r_1b_1) + (0, r_2, 0, \cdots, r_2b_2) + \cdots (0, \cdots, 0, r_m, 0, \cdots, r_mb_m) + (0, \cdots, 0, n^* , 0, \cdots, n^* b_k) \in f[H_{\omega + 1}]$ where $n^*$ is in the $k^{th}$-entry of $\bigoplus_{n = 1}^{\infty} \mathbb{Z}(p^n)$. It is easy to show that $$G \vDash p^k y =   (n_1, \cdots, n_m, 0, \cdots, n^* b_0 + \cdots + n_m b_m)$$ using that $p^k b_k = b_0$. Therefore, $f$ is a pure monomorphism. \end{proof}

%\begin{proof}
%The proof is similar to that of the previous theorem, but the definition of $\eta$ has to be modified slightly. Let $p_0,\cdots, p_{s-1}$ be prime number such that $m= p_0^{m_0} \cdots p_{s-1}^{m_{s-1}}$.  Let  $\eta \colon \bbN \to \Sub (\oplus_{i=1}^n \bbZ (m))$ be given by: 

 %$$\eta(k) := \{ (\rbar_1, \rbar_2, \cdots, \rbar_n) \in \oplus_{i=1}^n \bbZ (m) \; : \;  p_u^t|r_1x_1 + r_2x_2 + \cdots + r_nx_n \in q \} \leq \oplus_{i=1}^n \bbZ (m),$$

%if $k = st +u$ for $t,s \in \mathbb{N}$. In this case we have $r$ descending chains that are stationary since $\oplus_{i=1}^n \bbZ (m)$ is finite. \end{proof}

\section{$\aleph_0$-strongly homogeneous $p$-groups}  

In this section we introduce the model theoretic concept of $\aleph_0$-strongly homogeneity for abelian $p$-groups. We use Theorem \ref{T:main} to explicitly describe many $\aleph_0$-strongly homogeneous $p$-groups and to show that the universal model considered in the previous section is $\aleph_0$-strongly homogeneous. Moreover, we give a complete characterization of the countable $\aleph_0$-strongly homogeneous $p$-groups. We finish by showing the relation between $\aleph_0$-strongly homogeneous $p$-groups and transitive $p$-groups.

\begin{definition}
A $p$-group $M$ is \emph{$\aleph_0$-strongly homogeneous}\footnote{Also called $\aleph_0$-sequentially strongly homogeneous in the model theory literature.} if every partial pure isomorphism $f_0 \colon (M,B) \to (M,B')$ for $B, B'$ finite subgroups of $M$ extends to an automorphism $(f, f_0) \colon (M,B) \to (M,B').$\footnote{This definition is equivalent to the standard definition of an $\aleph_0$-strongly homogeneous model by pp-quantifier elimination and the fact that every finite subset generates a finite subgroup in a $p$-group.} 
\end{definition}

%A countable group $M$ is homogeneous if if for every partial pure isomorphism $f_0 \colon (M,B) \to (M,B')$ for $B, B'$ finite subgroups of $M$ and $m \in M$ there is $n \in M$ and $g_0$ such that $g_0: (M,B \cup \{m \} ) \to (M,B' \cup \{n\}$ is a partial pure isomorphism such that $g_0\upharpoonright B = f_0\upharpoonright B$ and $g_0(m)=n$.

The weaker notion of $\aleph_0$-homogeneity, which is equivalent to $\aleph_0$-strongly homogeneity for countable groups, has been studied in some classes of torsion-free groups in \cite{kakh}. Their methods and results are very different to the ones presented in this section.

Using Theorem \ref{T:main} we show that there are many $\aleph_0$-strongly homogeneous $p$-groups.

\begin{lemma} \label{L: countably homo}
If $\{ M_i \mid  i \in I\}$ is a set of indecomposable pure injective $p$-groups, then the direct sum $M = \bigoplus_{i \in I} \; M_i$ is $\aleph_0$-strongly homogeneous.
\end{lemma}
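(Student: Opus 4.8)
The plan is to reduce the extension problem to a finite approximation and then use pure injectivity plus a back-and-forth argument. First I would observe that since $M = \bigoplus_{i \in I} M_i$ is a direct sum of indecomposable pure injective $p$-groups, each $M_i$ is isomorphic to $\bbZ(p^{n})$ for some $n \in \mathbb{Z}_{>0} \cup \{\infty\}$, and by Theorem~\ref{T:main} the pure injective hull $H(M,B)$ of any finite subgroup $B \leq M$ splits off as a \emph{finite} direct summand $H^M(B)$ of $M$ that is itself a finite direct sum of cyclic and Pr\"ufer groups. So given a partial pure isomorphism $f_0 \colon (M,B) \to (M,B')$, the first step is to extend it to an isomorphism $\bar f_0 \colon H^M(B) \to H^M(B')$ of the corresponding finite pure injective summands: $f_0$ is a partial pure \emph{isomorphism} between $B$ and $B'$, hence induces an isomorphism between the pure injective hulls over the respective parameter sets by the uniqueness in Fact~\ref{F:pi hull}, and this isomorphism lands on a direct summand of $M$ because $H^M(B)$ and $H^M(B')$ are direct summands.

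The second step is to show that the complementary summands are isomorphic, or at least that one can match enough of them. Writing $M = H^M(B) \oplus M^{B}$ and $M = H^M(B') \oplus M^{B'}$, I want $M^B \cong M^{B'}$. This follows from the Krull–Schmidt–type uniqueness of decompositions of $\Sigma$-pure injective groups into indecomposables (quoted in the excerpt via \cite{gar}): both $M^B$ and $M^{B'}$ have the same indecomposable summands as $M$ with multiplicities obtained by subtracting the finitely many summands used up by $H^M(B)$ (resp. $H^M(B')$) from those of $M$; since $H^M(B) \cong H^M(B')$ via $\bar f_0$, they use up the same multiset of indecomposable types, so $M^B$ and $M^{B'}$ have identical decompositions and are isomorphic. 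Here one must be slightly careful when $M$ has infinitely many copies of a given $\bbZ(p^n)$ (then removing finitely many changes nothing) versus finitely many (then the counts still match because $H^M(B)$ and $H^M(B')$ remove the same number); cardinal arithmetic on the multiplicities handles both cases uniformly.

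The final step is to assemble the automorphism: pick any isomorphism $g \colon M^B \to M^{B'}$ and set $f = \bar f_0 \oplus g \colon H^M(B) \oplus M^B \to H^M(B') \oplus M^{B'}$, i.e. $f \colon M \to M$. This $f$ is an automorphism of $M$, and it restricts to $f_0$ on $B$ because $\bar f_0$ extends $f_0$ and $B \subseteq H^M(B)$. Thus $(f,f_0)\colon (M,B)\to(M,B')$ is the required automorphism of groups with parameters. I expect the main obstacle to be the bookkeeping in the second step: verifying rigorously that the complements $M^B$ and $M^{B'}$ are isomorphic requires invoking uniqueness of decomposition for $\Sigma$-pure injectives and doing the cardinal subtraction carefully, and one should also double-check that $H^M(B)$ really is a summand realized \emph{inside} $M$ (as discussed in the excerpt after Fact~\ref{F:linkage}) rather than merely abstractly isomorphic to $H(M,B)$ — this is what makes the direct-sum decomposition $M = H^M(B) \oplus M^B$ legitimate.
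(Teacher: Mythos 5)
Your overall plan is the paper's plan in spirit (extend $f_0$ to an isomorphism of pure injective hulls realized as direct summands, then match the complements by uniqueness of decomposition), but as written it has a genuine gap at both of the points you yourself flag, and the paper's proof avoids them by one preliminary move that you skip: since $B$ and $B'$ are finite, they lie in a finite subsum $M_{I_0} = \bigoplus_{i \in I_0} M_i$, and all the work is done inside $M_{I_0}$, extending by the identity on $\bigoplus_{i \notin I_0} M_i$ at the end. First, your appeal to the discussion after Fact~\ref{F:linkage} to realize $H^M(B)$ as a summand of $M$ does not apply directly: that construction requires the ambient group to be pure injective, and $M = \bigoplus_{i \in I} M_i$ need not be pure injective when $I$ is infinite (e.g.\ $\bigoplus_{n} \mathbb{Z}(p^n)$ is not). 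The clean justification is exactly the localization: $M_{I_0}$ is a finite direct sum of pure injectives, hence pure injective, so $H^{M_{I_0}}(B)$ is a summand of $M_{I_0}$ and therefore of $M$; the pp-type of $B$ in $M$ equals its pp-type in $M_{I_0}$ because $M_{I_0}$ is a direct summand.

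Second, your complement-matching step is under-justified. Krull--Schmidt--Remak--Azumaya uniqueness compares two given decompositions into indecomposables with local endomorphism rings; it does not by itself tell you that the complement $M^B$ of $H^M(B)$ in the whole (infinite) direct sum $M$ admits such a decomposition, so ``subtracting multiplicities'' is not yet meaningful. To run your global argument you would need the finite exchange property of $H^M(B)$ (a finite direct sum of modules with local endomorphism rings) to conclude $M^B \cong \bigoplus_{i \in I \setminus F} M_i$ for some finite $F$, and similarly for $M^{B'}$, before any cardinal bookkeeping. The paper sidesteps all of this: $M_{I_0}$ is $\Sigma$-pure injective, so by \cite{gar} it is a direct sum of indecomposable pure injectives and KSRA (Facchini) applies to the finite-index situation $M_{I_0} = H^{M_{I_0}}(B) \oplus L = H^{M_{I_0}}(B') \oplus L'$, giving $L \cong L'$ directly since $H^{M_{I_0}}(B) \cong H^{M_{I_0}}(B')$ are finite direct sums of indecomposables by Theorem~\ref{T:main}. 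So your argument is repairable, but the repair is essentially to insert the reduction to $M_{I_0}$ that the paper performs at the outset.
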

\begin{proof}
Let $B,$ $B' \leq M$ be finite subgroups of parameters and $f_0 \colon (M,B) \to (M,B')$ be a partial pure isomorphism.  Since $B, B'$ are finite, there is a finite set $I_0 \subseteq I,$ such that $B, B' \leq M_{I_0} = \bigoplus_{i \in I_0} \; M_i$.

Then there is $f_1:  H^{M_{I_0}}(B) \cong H^{M_{I_0}}(B')$ with $f_1(b) = f_0(b)$ for every $b \in B$. Moreover, there are $L, L'$ such that $M_{I_0} = H^{M_{I_0}}(B) \oplus L$ and $M_{I_0} = H^{M_{I_0}}(B') \oplus L'$. The group $M_{I_0}$ is $\Sigma$-pure injective, because it is a finite direct sum of $\Sigma$-pure injective groups. Hence $M_{I_0}$ is a direct sum of indecomposable pure injective groups by \cite[Lemma 1]{gar}. Then by the  Krull-Schmidt-Remak-Azumaya Theorem (see for example \cite[2.13]{fac}) the decomposition of $M_{I_0}$ is unique. Since $H^{M_{I_0}}(B), H^{M_{I_0}}(B')$ are isomorphic and the direct sum of finitely many indecomposable pure injectives by Theorem \ref{T:main}, there is $g: L \cong L'$. 

Let $h:= 1_{\bigoplus_{i \notin I_0} M_i} : \bigoplus_{i \notin I_0} \; M_i \cong \bigoplus_{i \notin I_0} \; M_i$. It is clear that $(f_1 \oplus g \oplus h, f_0): (M,B) \to (M,B')$ is as required. \end{proof}

%Let $B,$ $B' \leq M$ be finitely generated subgroups of parameters and $f_0 \colon (M,B) \to (M,B')$ be a partial isomorphism. Then there is $g: H(M,B) \cong H(M,B')$ with $g(b)=f_0(b)$ for every $b \in B$ by Fact \ref{F:pi hull}.

%There is $t \in\mathbb{N}$ such that $B, B' \leq M_t = \bigoplus_{k=1}^t \mathbb{Z}(p^k)^{(\lambda_k)}$. $M_{t}$ is pure injective by a similar argument to that of Theorem \ref{T:main}. Then $PE^{M_t}(B), PE^{M_t}(B') \leq_p M$ and there are $\psi_B: H(M,B) \cong_B PE^{M_t}(B)$ and $\psi_{B'}: H(M,B') \cong_{B'} PE^{M_t}(B')$. 

%Let $f_1= \psi_{B'} \circ h \circ \psi_B^{-1}: PE^{M_t}(B) \cong PE^{M_t}(B')$. It is clear that $f_1(b)= f_0(b)$ for every $b \in B$ and that $M \models p^k \mid a$ if and only if $M \models p^k \mid f_1(a)$ for every $a \in PE^{M_t}(B)$, $p$ prime and $t \in \mathbb{N}$. Since $M$ has no elements of infinite height, $PE^{M_t}(B), PE^{M_t}(B)$ are both finite by Theorem \ref{T:main} and  $f_1$ is height preserving.

%Therefore, there is $f$ an automorphism of $M$ extending $f_1$ by \cite[Exercise 38]{kapla} and the countability of $M$. 

One could use \cite[Theorem 5]{gar}, \cite[4.3]{pipr} instead of Theorem \ref{T:main} to prove the above result, but those results use some deep results of the model theory of modules which we avoid by using Theorem \ref{T:main}. 

We get that the universal model of Theorem \ref{main:t} is $\aleph_0$-strongly homogeneous as $\mathbb{Z}(p^n)$ is an indecomposable pure injective $p$-group for every $n \in \mathbb{Z}_{> 0} \cup \{ \infty \}$.

\begin{corollary}
$\bigoplus_{n =1}^\infty \bbZ (p^n)^{(\aleph_0)} \oplus \bbZ (p^\infty)^{(\aleph_0)}$ is  $\aleph_0$-strongly homogeneous.
\end{corollary}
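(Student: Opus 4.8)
The plan is to derive this immediately from Lemma~\ref{L: countably homo}. The group $\bigoplus_{n=1}^\infty \bbZ (p^n)^{(\aleph_0)} \oplus \bbZ (p^\infty)^{(\aleph_0)}$ is by construction a direct sum of copies of the groups $\bbZ(p^n)$ with $n \in \mathbb{Z}_{>0}$ together with copies of $\bbZ(p^\infty)$. So it suffices to observe that each of these summands is an indecomposable pure injective $p$-group, and then apply Lemma~\ref{L: countably homo} with the index set $I$ enumerating all the (countably many) direct summands appearing in the decomposition.

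The only point needing a remark is why each $\bbZ(p^n)$, $n \in \mathbb{Z}_{>0} \cup \{\infty\}$, is an indecomposable pure injective $p$-group. Indecomposability of $\bbZ(p^n)$ for finite $n$ is classical (it is cyclic of prime power order, hence has no nontrivial direct sum decomposition), and $\bbZ(p^\infty)$ is indecomposable since it is the injective hull of the simple group $\bbZ(p)$. Pure injectivity of each $\bbZ(p^n)$ for finite $n$ follows because it is a finitely generated (indeed finite) abelian group, hence pure injective; in fact the excerpt already records that $\bbZ(p^n)$ is $\Sigma$-pure injective for every $n \in \mathbb{Z}_{>0} \cup \{\infty\}$, and $\Sigma$-pure injective groups are in particular pure injective. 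And $\bbZ(p^\infty)$ is pure injective because it is (divisible, hence) injective, and injective abelian groups are pure injective. So all the hypotheses of Lemma~\ref{L: countably homo} are met.

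There is essentially no obstacle here: the corollary is a direct specialization of Lemma~\ref{L: countably homo} to the particular direct sum $M = \bigoplus_{n=1}^\infty \bbZ (p^n)^{(\aleph_0)} \oplus \bbZ (p^\infty)^{(\aleph_0)}$, and the verification that the summands are indecomposable pure injective $p$-groups uses only standard facts already available in the excerpt (indeed the remark preceding the corollary in the excerpt already points this out). I would simply write: ``This follows from Lemma~\ref{L: countably homo}, since $\bbZ(p^n)$ is an indecomposable pure injective $p$-group for every $n \in \mathbb{Z}_{>0} \cup \{\infty\}$.''
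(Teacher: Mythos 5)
Your proposal is correct and is exactly the paper's argument: the corollary is obtained by applying Lemma~\ref{L: countably homo} to the given direct sum, noting that $\bbZ(p^n)$ is an indecomposable pure injective $p$-group for every $n \in \mathbb{Z}_{>0} \cup \{\infty\}$. Your extra remarks verifying indecomposability and pure injectivity of the summands are standard and only flesh out the same one-line justification the paper gives.
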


Moreover, we get the following straightforward corollary which we will extend to arbitrary cardinals in Corollary \ref{C: non}.

\begin{corollary}\label{C : noniso}
There are $2^{\aleph_0}$ countable $\aleph_0$-strongly homogeneous $p$-groups.
\end{corollary}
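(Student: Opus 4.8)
The plan is to produce $2^{\aleph_0}$ pairwise non-isomorphic countable $p$-groups, each of which is $\aleph_0$-strongly homogeneous by Lemma~\ref{L: countably homo}. For every subset $S \subseteq \mathbb{Z}_{>0}$ set $M_S := \bigoplus_{n \in S} \mathbb{Z}(p^n)$. Since each $\mathbb{Z}(p^n)$ is an indecomposable pure injective $p$-group, $M_S$ is a countable direct sum of indecomposable pure injective $p$-groups, hence $\aleph_0$-strongly homogeneous by Lemma~\ref{L: countably homo}; and there are $2^{\aleph_0}$ subsets $S \subseteq \mathbb{Z}_{>0}$. It therefore suffices to check that $S \mapsto M_S$ is injective up to isomorphism.

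To distinguish the $M_S$ I would invoke the classical fact that a direct sum of cyclic $p$-groups determines the multiplicity of each cyclic summand up to isomorphism; equivalently, the Ulm invariants $f_{M}(k) := \dim_{\mathbb{Z}/p\mathbb{Z}}\bigl(p^k M[p]/p^{k+1}M[p]\bigr)$ are isomorphism invariants of $M$. A short computation with the socles of the summands $\mathbb{Z}(p^n)$ — the socle of $\mathbb{Z}(p^n)$ is cyclic of order $p$ and lies in $p^k\mathbb{Z}(p^n)$ exactly when $n > k$ — gives $f_{M_S}(k) = 1$ if $k+1 \in S$ and $f_{M_S}(k) = 0$ otherwise; see \cite{fuchs}. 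Hence $S = \{\, k+1 : f_{M_S}(k) = 1 \,\}$ is recovered from the isomorphism type of $M_S$, so $M_S \cong M_{S'}$ forces $S = S'$.

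This yields at least $2^{\aleph_0}$ non-isomorphic countable $\aleph_0$-strongly homogeneous $p$-groups, and the reverse inequality is immediate since up to isomorphism there are at most $2^{\aleph_0}$ countable structures in the countable language $L_\mathbb{Z}$. I do not expect a genuine obstacle: the appeal to Lemma~\ref{L: countably homo} is direct, and the only mildly technical point, the non-isomorphism of the $M_S$, is settled by the Ulm-invariant computation above.
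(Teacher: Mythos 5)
Your argument is correct and follows exactly the route the paper intends: the paper states the corollary as an immediate consequence of Lemma~\ref{L: countably homo}, and your construction of the groups $M_S$ as direct sums of indecomposable pure injective cyclic $p$-groups, distinguished by their Ulm invariants (equivalently, by which cyclic summands occur), is precisely the straightforward verification left implicit there. The only detail worth noting is that finitely many of the $S$ give finite groups, but this does not affect the count of $2^{\aleph_0}$ isomorphism classes.
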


The next couple of results completely characterize countable $\aleph_0$-strongly homogeneous $p$-groups. Before presenting our results, we recall a few notions from abelian group theory which will play a key role in our results. Let $M$ be a $p$-group. For $\alpha$ an ordinal $p^{\alpha + 1}M = p(p^\alpha M)$ and for $\alpha$ a  limit ordinal $p^{\alpha}M = \bigcap_{\beta < \alpha} p^\beta M$. It is clear that if $f$ is an automorphism of $M$ and $\alpha$ is an ordinal, then $a \in p^{\alpha} M$ if and only if $f(a) \in p^\alpha M$. For $n \in M$, the \emph{height of $n$ in $M$}, denoted by $h_M(n)$, is the ordinal $\alpha$ such that $n \in p^{\alpha}M \backslash p^{\alpha+1}M$ or $\beta$ if $n \neq 0$, $n \in p^{\alpha}M$ for every ordinal $\alpha$ and $\beta$ is the minimum ordinal such that $p^\beta M = p^{\beta + 1} M$ or $\infty$ if $n=0$. The \emph{Ulm sequence} of $n$ in $M$, denoted by $\mathbb{U}_M(n)$, is  $\mathbb{U}_M(n) = (\alpha_1, \alpha_2, \cdots )$ such that $\alpha_i$ is the height of $p^in$ in $M$.

 Let $f_0 \colon (M,B) \to (M,B')$ be a partial pure isomorphism, $f_0$ is \emph{height-preserving} if for every $b \in B$, $h_M(b)=h_M(f_0(b))$. A group $M$ is \emph{reduced} if it does not have a non-trivial divisible subgroup. We will use the following result frequently.
 
 \begin{fact}[{\cite[Exercise 38, Theorem 14]{kapla}}]\label{F: kapla} Assume $M$ is a countable reduced $p$-group. If $B, B'$ are finite subgroups of $M$ and $f_0 \colon (M,B) \to (M,B')$ is a height-preserving partial pure isomorphism, then there  exists an automorphism $f$ of $M$ such that $(f, f_0) \colon (M,B) \to (M,B').$ \end{fact}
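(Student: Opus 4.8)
The statement is Ulm's one-step extension lemma for countable reduced $p$-groups, and I would prove it by a back-and-forth argument. First I would observe that for finite subgroups of a $p$-group a partial pure isomorphism is just a group isomorphism, and that, since $B$ is closed under $\mathbb{Z}$-linear combinations and $M \models p^k \mid b$ iff $b \in p^k M$ iff $h_M(b) \ge k$, the hypothesis "height-preserving partial pure isomorphism $f_0 \colon (M,B) \to (M,B')$'' is equivalent to "$f_0$ is a group isomorphism $B \to B'$ with $h_M(b) = h_M(f_0(b))$ for all $b \in B$.'' Fix an enumeration $M = \{m_n : n < \omega\}$. The plan is to build an increasing chain of finite subgroups $B = B_0 \le B_1 \le \cdots$ and $B' = B'_0 \le B'_1 \le \cdots$ and height-preserving isomorphisms $f_n \colon B_n \to B'_n$, each extending $f_0$ and the previous ones, arranged so that $m_n \in B_{2n+1}$ and $m_n \in B'_{2n+2}$. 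Then $f := \bigcup_n f_n \colon M \to M$ is a group homomorphism that is injective (a union of monomorphisms) and surjective (every $m_n$ lies in some $B'_k$), hence an automorphism of $M$ extending $f_0$. The "back'' steps are just "forth'' steps applied to $f_n^{-1} \colon (M, B'_n) \to (M, B_n)$, which is again a height-preserving partial pure isomorphism, so it suffices to treat the forth step.

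The forth step is the one-element extension lemma: given a height-preserving isomorphism $f_n \colon B_n \to B'_n$ of finite subgroups of $M$ and $x \in M$, there is $x' \in M$ with $f_n$ extending to a height-preserving isomorphism $\langle B_n, x \rangle \to \langle B'_n, x' \rangle$, $x \mapsto x'$. I would first reduce to the case $px \in B_n$: let $k \ge 1$ be least with $p^k x \in B_n$; adjoining in turn $p^{k-1}x, p^{k-2}x, \dots, p x, x$, each element has its $p$-multiple already in the current subgroup, and lies outside it because $1 - mp$ acts invertibly on the $p$-group $M$ and fixes each intermediate subgroup pointwise; so the general case follows from finitely many applications of the case $px \in B_n$. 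Assume then $x \notin B_n$, $b_0 := px \in B_n$, set $\alpha := h_M(x)$ and $b'_0 := f_n(b_0)$, so $h_M(b'_0) = h_M(b_0) \ge \alpha + 1$. Since every element of $\langle B_n, x\rangle$ has the form $b + ix$ with $b \in B_n$ and $0 \le i < p$, the assignment $x \mapsto x'$ defines a height-preserving isomorphism precisely when $p x' = b'_0$ and $h_M(b + ix) = h_M(f_n(b) + i x')$ for all $b \in B_n$ and $0 < i < p$.

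Constructing such an $x'$ is the heart of the proof and the main obstacle. Because $b'_0 \in p^{\alpha+1}M = p(p^\alpha M)$, one may choose $z'$ with $pz' = b'_0$; in the case $h_M(px) = \alpha + 1$ any such $z'$ automatically has $h_M(z') = \alpha$, while in the case $h_M(px) > \alpha + 1$ (including $px = 0$) subtracting a $p$-divisible piece from $x$ exhibits an element of order $p$ of height exactly $\alpha$, which — since the domain and codomain are the \emph{same} group $M$ — is therefore available on the range side as well. One then corrects $z'$ by a suitable element of the socle $M[p]$ of the appropriate height so that the finitely many equalities $h_M(f_n(b) + ix') = h_M(b + ix)$ all hold and $x' \notin B'_n$. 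The delicate point is exactly this height bookkeeping: matching $h_M(x') = h_M(x)$ alone is not enough, one must simultaneously tune the heights of all $f_n(b) + ix'$, and this is where the hypotheses enter — countability is what lets the back-and-forth exhaust $M$, and reducedness is what makes heights honest ordinals governing the structure of $M$ (so that the needed socle corrections of prescribed height exist). The remaining verification of this lemma is standard but lengthy, and I would refer to Kaplansky \cite{kapla} for the details.
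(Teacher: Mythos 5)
The paper does not prove this statement at all: it is imported verbatim as a Fact from Kaplansky (Exercise 38, resting on the machinery of Theorem 14), so there is no internal argument to compare yours against. Your back-and-forth skeleton is precisely the standard argument behind that citation, and since you defer the genuinely hard step --- the one-element height-preserving extension, where one must take $x$ proper with respect to $B_n$ and, in the case $h_M(px)>\alpha+1$, produce a height-$\alpha$ socle element proper with respect to $B'_n$ (this is where finiteness of the subgroups and the height-preserving hypothesis really enter) --- to the very same source the paper cites, your treatment is consistent with the paper's own.
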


%There are at least $\lambda^{\aleph_0}$ $\aleph_0$-strongly homogeneous $p$-groups of cardinality $\lambda$
%For uncountable cardinals we can show the existence of strongly $\aleph_0$-homogeneous groups.

%\begin{proposition}
%There is a strongly $\aleph_0$-homogeneous $p$-group of cardinality $\lambda$ for every cardinal $\lambda$.
%\end{proposition}
%\begin{proof} Let $\lambda$ be a cardinal.
%Let $T =\Th(\bbZ (p)^{(\aleph_0)})$, since $\bbZ (p)^{(\aleph_0)}$ is $\Sigma$-pure-injective it follows that $T$ is $\lambda$-stable by \cite[2.1]{ziegler}. Let $N$ be a saturated model of $T$ of cardinality $\lambda$, this exist by \cite[Theorem III.3.12]{shbook}. Since every element of $\bbZ (p)^{(\aleph_0)}$ is annihilated by $p$, $N$ is a $p$-group.  Moreover, $N$ is strongly $\aleph_0$-homogeneous as $N$ is saturated (see for example \cite[4.3.3]{mar}).
%\end{proof}

%By taking $T =\Th(\bbZ (p^k)^{(\aleph_0)})$ for every $k \in \mathbb{N}$ it is easy to show that for every $\lambda$ there are $\aleph_0$ non-isomorphic strongly $\aleph_0$-homogeneous $p$-group of cardinality $\lambda$ for every cardinal $\lambda$. 

%A natural question to ask is if we have the maximum number of non-isomorphic groups as it happened for $\aleph_0$. 

%\begin{question}
%Are there $2^\lambda$  non-isomorphic $\aleph_0$-strongly homogeneous $p$-groups of cardinality $\lambda$ for $\lambda$ an uncountable cardinal? If not, how many are there?
%\end{question}

We begin by characterizing the non-reduced $\aleph_0$-strongly homogeneous $p$-groups.

\begin{proposition}\label{P: cnr} Assume $M$ is a countable $p$-group that is not reduced. The following are equivalent.
\begin{enumerate}
\item $M$ is $\aleph_0$-strongly homogeneous.
\item $p^{\omega}(M/D) = 0$ where $D$ is the maximal divisible subgroup of $M$
\item $M$ is isomorphic to $ \bigoplus_{n =1}^\infty \bbZ (p^n)^{(\alpha_n)} \oplus \bbZ (p^\infty)^{(\alpha_\infty)}$ such that $\alpha_n$ is countable (possibly finite) for every $n \in \mathbb{Z}_{> 0} \cup \{ \infty \}$. 
\end{enumerate}
\end{proposition}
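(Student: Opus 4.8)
The plan is to prove the cycle of implications $(3)\Rightarrow(1)\Rightarrow(2)\Rightarrow(3)$.

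\textbf{From (3) to (1).} This is immediate from Lemma~\ref{L: countably homo}: a group of the form $\bigoplus_{n=1}^\infty \bbZ(p^n)^{(\alpha_n)}\oplus\bbZ(p^\infty)^{(\alpha_\infty)}$ is a direct sum of the indecomposable pure injective $p$-groups $\bbZ(p^n)$ ($n\in\mathbb{Z}_{>0}\cup\{\infty\}$), hence $\aleph_0$-strongly homogeneous. No work is needed here.

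\textbf{From (1) to (2).} I would argue by contraposition. Write $M = D \oplus R$, where $D$ is the maximal divisible subgroup (a direct sum of copies of $\bbZ(p^\infty)$) and $R$ is reduced; since $M$ is not reduced, $D\neq 0$, so fix a copy of $\bbZ(p^\infty)$ inside $M$ with socle generator $d$, so $p^\omega M \ni d$ and $h_M(d)=\infty$ in the usual convention, but more to the point $d\in p^n M$ for every $n$. Suppose $p^\omega(M/D)\neq 0$. Then there is a nonzero element $\bar r\in R$ (identifying $M/D$ with $R$) of order $p$ lying in $p^\omega R$, i.e. $h_M(r)\geq\omega$ for a suitable $r$ of order $p$ in $R$; in fact, since a reduced $p$-group with an element of infinite height has, by Ulm-type considerations, such an element $r$ with $pr=0$ and $r\in p^nM$ for all $n$. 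Now consider the two finite subgroups $B=\langle d\rangle$ and $B'=\langle r\rangle$, both $\cong\bbZ(p)$, and the map $f_0\colon d\mapsto r$. Because $d$ and $r$ both lie in $p^nM$ for all $n$ and both have order $p$, the pp-type $\pp^+(d,M)$ — which is generated by $\{pd\doteq 0\}\cup\{p^k\mid d : k\in\mathbb N\}$ — equals $\pp^+(r,M)$, so $f_0$ is a partial pure isomorphism. But $d\in D$ while $r\notin D$; any automorphism of $M$ preserves the maximal divisible subgroup $D$, so no automorphism can send $d$ to $r$. Hence $M$ is not $\aleph_0$-strongly homogeneous. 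The one point to be careful about is producing $r$: one needs that if $p^\omega R\neq 0$ for a reduced $p$-group $R$ then $p^\omega R[p]\neq 0$, which holds because $p^\omega R$ is itself a nonzero $p$-group and every nonzero $p$-group has nonzero socle.

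\textbf{From (2) to (3).} Assume $p^\omega(M/D)=0$, i.e. $M/D$ is a reduced $p$-group with no elements of infinite height, equivalently $p^\omega M = D$. Such a group $M/D$ is, by Pr\"ufer's theorem (a countable $p$-group with $p^\omega = 0$ is a direct sum of cyclic groups; e.g.\ \cite[\S 6.3.6 or Pr\"ufer's first theorem]{fuchs}), isomorphic to $\bigoplus_{n=1}^\infty\bbZ(p^n)^{(\alpha_n)}$ with each $\alpha_n$ countable. Since $D$ is divisible it is a direct summand of $M$, so $M\cong D\oplus(M/D)\cong \bbZ(p^\infty)^{(\alpha_\infty)}\oplus\bigoplus_{n=1}^\infty\bbZ(p^n)^{(\alpha_n)}$ with all $\alpha_n$ countable, and $\alpha_\infty\geq 1$ since $M$ is not reduced. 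This gives (3).

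\textbf{Main obstacle.} The genuinely load-bearing step is $(1)\Rightarrow(2)$, and within it the verification that the candidate map $f_0\colon d\mapsto r$ really is a partial pure isomorphism: one must check that in a $p$-group the pp-type of an order-$p$ element of infinite height is completely determined (every pp-formula in one variable reduces, over $\Th(\Ab)$ and the $p$-group hypothesis, to a conjunction of $p^k\mid v$ and $p^\ell v\doteq 0$), so that $d$ and $r$ are genuinely indistinguishable by pp-formulas without parameters. The rest is bookkeeping: the divisible part is automorphism-invariant, and (2)$\Rightarrow$(3) is a direct appeal to the structure theory for countable $p$-groups with trivial first Ulm subgroup together with divisibility of $D$.
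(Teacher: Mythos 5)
Your proposal is correct and takes essentially the same approach as the paper: (3)$\Rightarrow$(1) via Lemma~\ref{L: countably homo}, (2)$\Rightarrow$(3) via the structure theory of countable $p$-groups with trivial first Ulm subgroup, and (1)$\Rightarrow$(2) by pairing an order-matched infinite-height element of the reduced summand with an element of $D$ and noting their pp-types coincide. The only minor difference is that you obtain the contradiction from the automorphism-invariance of the maximal divisible subgroup (restricting to order-$p$ elements), whereas the paper notes that the reduced element fails to lie in $p^{\alpha}M$ for some ordinal $\alpha$ while its image does; both are valid.
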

\begin{proof}
(2) implies (3) follows from \cite[\S 3.5.3]{fuchs} and it is in this step that we use that $M$ is countable, and (3) to (1) follows from Lemma \ref{L: countably homo}. So we only show (1) implies (2).

Assume for the sake of contradiction that $p^{\omega}(M/D) \neq 0$. Since $D$ is divisible there is $R$ reduced such that $M = D \oplus R$ and $R$ is isomorphic to $M/D$. So pick $a \neq 0 \in p^{\omega} R$. Since $R$ is reduced and a direct summand of $M$, there is an ordinal $\alpha$ such that $a \notin p^{\alpha} M$. Let $b \in D$ such that the order of $b$ is equal to the order of $a$, this is possible as $\mathbb{Z}(p^{\infty})$ has elements of all possible $p^n$ orders and $G$ is not reduced. Let $f_0: (M,  \langle a \rangle) \to (M, \langle b \rangle)$ be given by $f(a)=b$. Since $a, b$ have the same order and are in $p^{\omega} M$, it follows from Equation 2 that $f_0: (M,  \langle a \rangle) \to (M, \langle b \rangle)$ is a partial pure isomorphism. Then there is an automorphism $(f, f_0) \colon (M,  \langle a \rangle) \to (M, \langle b \rangle)$. This contradicts the fact that  $a \notin p^{\alpha} M$ and $f(a) = b \in p^{\alpha} M$. Therefore, $p^{\omega}(M/D) = 0$
\end{proof}
\begin{remark}
(1) implies (2) is true for arbitrary cardinals.
\end{remark}

Let $M$ be a $p$-group and $\alpha$ be an ordinal. We will denote by $p^\alpha M[p]$ the elements of $p^\alpha M$ annihilated by $p$. The $\alpha$-th \emph{Ulm invariant} of $M$ is the dimension of  $p^\alpha M[p]/ p^{\alpha + 1} M[p]$ as a vector space over the field of size $p$. Ulm's Theorem (see for example \cite[\S 11]{kapla}) asserts that two reduced countable $p$-groups are isomorphic if and only if they have the same Ulm invariants, making of the Ulm invariants a complete set of invariants for countable $p$-groups. The following result completely characterizes  countable $\aleph_0$-strongly homogeneous reduced $p$-groups by determining the possible Ulm invariants. 

\begin{lemma}\label{L : red}
Assume $M$ is a countable reduced $p$-group. The following are equivalent.
\begin{enumerate}
\item $M$ is $\aleph_0$-strongly homogeneous. 
\item One of the following hold.
\begin{enumerate}
\item $p^\omega M = 0$. In this case, $M$ is isomorphic to $ \bigoplus_{n =1}^\infty \bbZ (p^n)^{(\alpha_n)}$ such that $\alpha_n$ is countable (possibly finite) for every $n \in \mathbb{N}$ and all  the infinite Ulm invariants are zero.
\item There is an $n \geq 1$ such that $p^{\omega + n} M =0$ and $p^\omega M[p] = p^{\omega + (n -1)} M[p] \neq 0$. In this case, there is a unique $n \in \mathbb{N}$ such that the $\omega + (n-1)$-st Ulm invariant is not zero and all the other infinite Ulm invariants are zero.
\end{enumerate}
\end{enumerate}
\end{lemma}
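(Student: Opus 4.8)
The plan is to prove both implications by a careful bookkeeping of Ulm invariants, using Fact~\ref{F: kapla} for the forward implication and Lemma~\ref{L: countably homo} together with Ulm's Theorem for the converse. Throughout, $M$ is countable reduced, so Ulm's Theorem applies and $M$ is determined by the sequence of Ulm invariants $\{f_\alpha(M)\}_{\alpha}$, where $f_\alpha(M) = \dim_{\mathbb F_p} p^\alpha M[p]/p^{\alpha+1}M[p]$.

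For (1)$\Rightarrow$(2): Suppose $M$ is $\aleph_0$-strongly homogeneous. First I would show that $M$ can have at most one non-zero infinite Ulm invariant. Indeed, if $f_{\omega + i}(M) \neq 0 \neq f_{\omega + j}(M)$ for $i < j$ (or more generally two distinct infinite heights occur), pick socle elements $a$ at height $\omega + i$ and $b$ at height $\omega + j$; since both have order $p$ and lie in $p^\omega M$, the map $\langle a\rangle \to \langle b\rangle$ sending $a \mapsto b$ is a partial pure isomorphism (as in the proof of Proposition~\ref{P: cnr}, using that pp-formulas in a $p$-group reduce to divisibility conditions $p^k \mid v$ and that an element of order $p$ lying in $p^\omega M$ satisfies $p^k \mid v$ for every $k$, so its pp-type is the ``most divisible'' one), but no automorphism can move $a$ to $b$ because automorphisms preserve the subgroups $p^\alpha M$ and $a \notin p^{\omega+j}M$ while $b \in p^{\omega+j}M$. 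Hence at most one infinite Ulm invariant is non-zero. If none is, then $p^\omega M = 0$ and we are in case (a); here $M$ is a direct sum of cyclic groups by Pr\"ufer's theorem (a countable $p$-group with $p^\omega M = 0$ is a direct sum of cyclics, e.g.\ \cite[\S 3.5.3]{fuchs} or Pr\"ufer's theorem), giving the stated form. If exactly one infinite Ulm invariant $f_{\omega + (n-1)}(M) \neq 0$ is non-zero, I must show $p^{\omega + n}M = 0$, i.e.\ there are no \emph{non-socle} elements of height $\geq \omega$ either; this is where I expect the main work. The idea is the same: if there were $c \in p^{\omega + n}M$ with $c \neq 0$, then $p^{k}c$ for suitable $k$ would be a socle element of height $> \omega + (n-1)$, contradicting that $\omega + (n-1)$ is the only infinite height appearing in the socle; more precisely, $p^\omega M[p] = p^{\omega + (n-1)}M[p]$ forces all socle elements of infinite height to have height exactly $\omega + (n-1)$, and a non-zero element of $p^{\omega+n}M$ would, after multiplying by an appropriate power of $p$, produce a socle element in $p^{\omega+n}M[p] \subseteq p^{\omega+n}M[p]$, contradicting $p^\omega M[p] = p^{\omega+(n-1)}M[p]$ unless that element is zero — but if $p^j c \neq 0$ for all small $j$ we can track it down to the socle. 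One must also observe $p^\omega M[p] \neq 0$ in this case, which is immediate since $f_{\omega+(n-1)}(M) \neq 0$. So we land in case (b), and the finitely many finite Ulm invariants may be arbitrary countable cardinals.

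For (2)$\Rightarrow$(1): In case (a), $p^\omega M = 0$ and $M \cong \bigoplus_{n=1}^\infty \mathbb Z(p^n)^{(\alpha_n)}$ is a direct sum of the indecomposable pure injective $p$-groups $\mathbb Z(p^n)$, so $\aleph_0$-strong homogeneity follows directly from Lemma~\ref{L: countably homo}. Case (b) is the substantive one: here $M$ is \emph{not} a direct sum of indecomposable pure injectives (it has elements of infinite height but is reduced), so Lemma~\ref{L: countably homo} does not apply and instead I would invoke Fact~\ref{F: kapla}. Given finite subgroups $B, B'$ and a partial pure isomorphism $f_0\colon (M,B)\to(M,B')$, it suffices to upgrade $f_0$ to a \emph{height-preserving} partial pure isomorphism, since Fact~\ref{F: kapla} then extends it to an automorphism. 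The key point is that under the hypothesis of case (b), for every $b \in B$ the pp-type of $b$ determines $h_M(b)$: if $h_M(b) < \omega$ this is automatic (finite heights are detected by $p^k \mid v$ formulas), and if $h_M(b) \geq \omega$ then necessarily $p^{\omega+n}M = 0$ forces $b$ to have height among $\omega, \omega+1, \ldots, \omega+(n-1)$, but the condition $p^\omega M[p] = p^{\omega+(n-1)}M[p]$ together with $p^{\omega+n}M=0$ pins down, via the Ulm sequence $\mathbb U_M(b)$ (which \emph{is} determined by $\pp^+(b,M)$), exactly which of these it is — essentially because once you reach height $\geq \omega$ in the socle you are forced to height $\omega+(n-1)$, and an element of height $\omega+i$ with $i<n-1$ has $p\cdot(\text{that element})$ either $0$ or of height $\geq \omega+(i+1)$, allowing an inductive determination. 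Hence $f_0$ already \emph{is} height-preserving, and Fact~\ref{F: kapla} finishes the proof.

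The main obstacle I anticipate is the bookkeeping in the ``non-socle'' part of (1)$\Rightarrow$(2) — ruling out elements of height $\geq \omega$ that are killed by $p^n$ but not by $p$ — and the dual point in (2)$\Rightarrow$(1) that pp-types detect heights in case (b). Both reduce to the same elementary fact about the structure of $p^\omega M$ when $p^{\omega+n}M = 0$ and $p^\omega M[p]$ is concentrated in a single layer, and should follow by a short induction on $i$ using that $p(p^{\omega+i}M) = p^{\omega+i+1}M$. The rest is an application of Ulm's theorem to normalize the form of $M$ in case (a), and a direct appeal to Lemma~\ref{L: countably homo} or Fact~\ref{F: kapla} as appropriate.
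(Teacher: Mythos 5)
Your overall strategy mirrors the paper's (violate homogeneity with same-order elements of $p^\omega M$ in one direction; show partial pure isomorphisms are height-preserving and invoke Fact~\ref{F: kapla}, resp.\ Lemma~\ref{L: countably homo}, in the other), but there are two genuine gaps. In (1)$\Rightarrow$(2) you pass from ``at most one infinite height occurs in the socle'' to ``the unique non-zero infinite Ulm invariant is $f_{\omega+(n-1)}$ for some finite $n$'' without justification: nothing in your sketch rules out that the unique infinite socle height is an ordinal $\geq \omega+\omega$, in which case $p^{\omega+k}M\neq 0$ for every finite $k$ and case (b) as stated fails. The paper closes exactly this hole with its Claim 1: if $p^{\omega+k}M\neq 0$ for all finite $k$, then since $M$ is reduced the chain $(p^{\omega+k}M)_{k<\omega}$ cannot stabilize at a non-zero (hence divisible) subgroup, so it drops cofinally, and one extracts two same-order elements of $p^\omega M$ lying in different layers, contradicting homogeneity. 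You could instead prove the classical fact that a non-zero reduced $p$-group $N$ never satisfies $N[p]\subseteq p^\omega N$ and apply it to $N=p^\omega M$, but some such argument is needed. (Also, your ``more precisely'' clause misstates the contradiction: a socle element inside $p^{\omega+n}M$ is perfectly consistent with $p^\omega M[p]=p^{\omega+(n-1)}M[p]$; the contradiction is with the uniqueness of the infinite socle height, as in your first formulation.)

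In (2)$\Rightarrow$(1), case (b), the crux is precisely your parenthetical claim that $\mathbb{U}_M(b)$ is determined by $\pp^+(b,M)$, and as a general statement this is false: pp-formulas only record finite divisibility, so any two elements of $p^\omega M$ of the same order have the same pp-type regardless of their Ulm sequences. What must be proved is that \emph{under the case (b) hypotheses} the order of an element of $p^\omega M$ determines its Ulm sequence, and the inductive mechanism you sketch (multiplication by $p$ raises height by at least one) gives only one-sided bounds and does not exclude height gaps. The paper proves this as Claim 2 by induction on the order, using Kaplansky's transitivity theorem for countable $p$-groups together with $h_M(x-y)=\min\{h_M(x),h_M(y)\}$ and $p^\omega M[p]=p^{\omega+(n-1)}M[p]$. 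There is in fact an elementary alternative close to what you intend: setting $N=p^\omega M$ (so $N$ has exponent $\leq p^n$ and $N[p]\subseteq p^{n-1}N$), an induction on $k$ shows every element of order $p^k$ lies in $p^{n-k}N$ (write $pa=p\,(p^{n-k}c)$ and correct $a$ by $p^{n-k}c$ using the socle condition), which combined with the obvious upper bound forces $h_M(a)=\omega+(n-k)$ and hence the whole Ulm sequence; then every partial pure isomorphism is height-preserving and Fact~\ref{F: kapla} finishes. But as written this key step is asserted, not proved, so the proposal is incomplete at both of its load-bearing points.
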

\begin{proof}
$\Rightarrow$:  If $p^\omega M = 0$, $M$ is isomorphic to $ \bigoplus_{n =1}^\infty \bbZ (p^n)^{(\alpha_n)}$ such that all the $\alpha_n$'s are countable (possibly finite) by \cite[\S 3.5.3]{fuchs}.

 Suppose that $p^\omega M \neq 0$. We show first that there  is $n \geq 1$ such that $p^{\omega + n} M = 0$.

\underline{Claim 1}: There exists an $n \geq 1$ such that $p^{\omega + n} M = 0$.

\underline{Proof of Claim 1:} Assume for the sake of contradiction that  $p^{\omega + n} M \neq 0$ for every $n \geq 1$.  Then it follows that for every $n$ there is a $k > n$ such that $p^{\omega + k} M$ is properly contained in $p^{\omega + n} M$, as otherwise $p^{\omega + n} M$ would be a non-zero divisibe subgroup of $M$ contradicting the fact that $M$ is reduced. Therefore, there are $n_1, n_2 \in \mathbb{N}$ and $a, b \in M$ such that  $a \in p^{\omega + n_1} M \backslash p^{\omega + n_1 + n_2} M$ and $b \in  p^{\omega + n_1 + n_2} M$ with the order of $a$ equal to the order of $b$. Since $a, b$ have the same order and are in $p^{\omega} M$, it follows from Equation 2 that $f_0: (M,  \langle a \rangle) \to (M, \langle b \rangle)$ with $f(a)=b$ is a partial pure isomorphism. Then there is an automorphism $(f, f_0) \colon (M,  \langle a \rangle) \to (M, \langle b \rangle)$. This contradicts the fact that  $a \notin  p^{\omega + n_1 + n_2} M$ and $f(a) = b \in  p^{\omega + n_1 + n_2} M$.$\dagger_{\text{Claim 1}}$

Let $n$ be the minimum $n\geq 1$ such that $p^{\omega + n} M = 0$. We show that $p^\omega M[p]  =  p^{\omega + (n -1)} M[p] \neq 0$. It is clear that they are both different from zero, so it is enough to show that $p^\omega M[p] = p^{\omega + (n -1)} M[p]$. Assume for the sake of contradiction that this is not the case, then there is $a \in p^\omega M[p] \backslash p^{\omega + (n -1)} M[p]$. Let $b \neq 0 \in p^{\omega + (n -1)} M[p]$. Since $a$ and $b$ have order $p$ and are in $p^{\omega}M$ a similar argument to that of Claim 1 can be used to obtain a contradiction.

$\Leftarrow$: Let $B,$ $B' \leq M$ be finite subgroups of parameters and $f_0 \colon (M,B) \to (M,B')$ be a partial pure isomorphism. If $p^{\omega} M =0$, it follows that $f_0$ is height-preserving. Then there is an automorphism $(f, f_0) \colon (M,  B) \to (M, B')$ by Fact \ref{F: kapla}. 

So assume that $p^{\omega}M \neq 0$, then there is  $n \geq 1$ such that $p^{\omega + n} M =0$ and \begin{equation} p^\omega M[p] = p^{\omega + (n -1)} M[p] \neq 0 
\end{equation}

The next claim will be the key step of the proof.

\underline{Claim 2}: For every $k \in \mathbb{N}$, for every $a, b \in p^\omega M$, if the order of $a$ is $p^k$ and $g: (M,  \langle a \rangle) \to (M, \langle b \rangle)$ with $g(a)=b$ is a partial pure isomorphism, then $\mathbb{U}_M(a)= \mathbb{U}_M(b)$.

\underline{Proof of Claim 2:}  We do the proof by induction on $k$. 

%We may assumme that $a$ and $b$ are different from zero as otherwise the result is trivial.

 If $k=1$, then $h_M(a)=\omega + (n-1) = h_M(b)$ by Equation 5 as $a,b \in p^\omega M[p]$. Clearly $\mathbb{U}_M(a)= \mathbb{U}_M(b)$ since $a, b$ have order $p$.

We do the induction step, let $k > 1$. Observe that $g^*: (M,  \langle pa \rangle) \to (M, \langle pb \rangle)$ with $g^*(pa)=pb$ is a partial pure isomorphism as $g$ is a partial pure isomorphism, so by induction hypothesis $\mathbb{U}_M(pa)= \mathbb{U}_M(pb)$. To finish, it is enough to show that $h_M(a)=h_M(b)$. Assume for the sake of contradiction that $h_M(a)\neq h_M(b)$. We may assume without loss of generality that $h_M(a) < h_M(b) \leq \omega + (n-1)$.

Since $M$ is countable and $\mathbb{U}_M(pa)= \mathbb{U}_M(pb)$, there is $g^{**}$ an automorphism of $M$ such that $g^{**}(pa)=pb$ by \cite[Theorem 24]{kapla}. Then $p(g^{**}(a) - b) =0$ and $g^{**}(a) - b \in p^\omega M[p]$ so $h_M(g^{**}(a) - b) \geq \omega + (n-1)$ by Equation 5. On the other hand, $h_M(g^{**}(a) - b) = min\{ h_M(a), h_M(b)\} < \omega + (n-1)$ by \cite[\S 18]{kapla}, which is clearly a contradiction.$\dagger_{\text{Claim 2}}$

It follows from Claim 2 that $f_0$ is height-preserving. Then there is an automorphism $(f, f_0) \colon (M,  B) \to (M, B')$ by Fact \ref{F: kapla}. \end{proof}

\begin{remark}
(1) implies (2) is true for arbitrary cardinals. We will show in Proposition \ref{P: unc} that if Case (a) holds then $M$ is $\aleph_0$-strongly homogeneous for arbitrary cardinals. 
\end{remark}

It follows from Lemma \ref{L : red} that $H_{\omega + 1}$ (defined in Example \ref{E:1}) is an $\aleph_0$-strongly homogeneous $p$-group with $p^\omega H_{\omega + 1} \neq 0$. $H_{\omega + 1}$ is not a direct sum of indecomposable pure injective $p$-groups and exemplifies that Case (b) of Lemma \ref{L : red} can occur.

The next result puts together Proposition \ref{P: cnr} and Lemma \ref{L : red} and  completely characterizes  countable $\aleph_0$-strongly homogeneous $p$-groups.

\begin{theorem}\label{T: allc} Assume $M$ is a countable $p$-group. The following are equivalent.
\begin{enumerate}
\item $M$ is $\aleph_0$-strongly homogeneous.
\item One of the following holds.
\begin{enumerate}
\item $M$ is isomorphic to $ \bigoplus_{n =1}^\infty \bbZ (p^n)^{(\alpha_n)} \oplus \bbZ (p^\infty)^{(\alpha_\infty)}$ such that $\alpha_i$ is countable (possibly finite) for every $n \in \mathbb{Z}_{>0} \cup \{\infty\}$. 
\item There is an $n \geq 1$ such that $p^{\omega + n} M =0$ and $p^\omega M[p] = p^{\omega + (n -1)} M[p] \neq 0$.
\end{enumerate}
\end{enumerate}
\end{theorem}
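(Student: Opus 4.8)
The plan is to combine the two already-established characterizations, Proposition~\ref{P: cnr} and Lemma~\ref{L : red}, into a single statement covering all countable $p$-groups, by splitting on whether $M$ is reduced or not. First I would observe that any $p$-group $M$ decomposes as $M = D \oplus R$, where $D$ is the maximal divisible subgroup (a direct sum of copies of $\bbZ(p^\infty)$) and $R$ is reduced; this is the structural input that lets us reduce to the two cases already handled.

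The forward direction, (1)$\Rightarrow$(2), would be argued as follows. Assume $M$ is countable and $\aleph_0$-strongly homogeneous. If $M$ is not reduced, then Proposition~\ref{P: cnr} applies directly and gives that $M \cong \bigoplus_{n=1}^\infty \bbZ(p^n)^{(\alpha_n)} \oplus \bbZ(p^\infty)^{(\alpha_\infty)}$ with each $\alpha_n$ countable, which is precisely case (a). If $M$ is reduced, then Lemma~\ref{L : red} applies: either $p^\omega M = 0$, in which case $M \cong \bigoplus_{n=1}^\infty \bbZ(p^n)^{(\alpha_n)}$ with $\alpha_n$ countable (this is case (a) with $\alpha_\infty = 0$), or there is $n \geq 1$ with $p^{\omega+n}M = 0$ and $p^\omega M[p] = p^{\omega+(n-1)}M[p] \neq 0$, which is exactly case (b). So in every sub-case one of (a), (b) holds.

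For the converse, (2)$\Rightarrow$(1): if case (a) holds, then $M$ is a direct sum of indecomposable pure injective $p$-groups (the summands $\bbZ(p^n)$ and $\bbZ(p^\infty)$ are all indecomposable pure injective), so Lemma~\ref{L: countably homo} immediately gives that $M$ is $\aleph_0$-strongly homogeneous. If case (b) holds, then $M$ has nonzero elements of infinite height and $p^{\omega+n}M = 0$, so $M$ is reduced (it has no nonzero divisible subgroup, since $p^{\omega+n}M = 0$ forces any divisible subgroup to vanish); hence $M$ is a countable reduced $p$-group satisfying condition (2b) of Lemma~\ref{L : red}, and that lemma gives $\aleph_0$-strong homogeneity. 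One small point to verify carefully is that case (b) does indeed force $M$ reduced and that it is genuinely disjoint from / complementary to case (a) — that is, that these really do exhaust the possibilities — but this is already implicit in the case split used for the forward direction.

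I do not expect any serious obstacle here: the theorem is essentially a bookkeeping amalgamation of Proposition~\ref{P: cnr} and Lemma~\ref{L : red}, and all the real work (the height-preserving extension arguments via Fact~\ref{F: kapla}, the Ulm-sequence induction, and the structural consequences of $\aleph_0$-strong homogeneity) has been done in proving those two results. The only thing requiring a sentence of care is matching the statements: noting that ``$p^\omega M = 0$'' in Lemma~\ref{L : red}(2a) and ``$M$ non-reduced with $p^\omega(M/D)=0$'' in Proposition~\ref{P: cnr} both collapse into the single uniform description ``$M \cong \bigoplus_{n=1}^\infty \bbZ(p^n)^{(\alpha_n)} \oplus \bbZ(p^\infty)^{(\alpha_\infty)}$ with all $\alpha_i$ countable'' of case (a), by allowing $\alpha_\infty$ to be any countable cardinal including $0$.
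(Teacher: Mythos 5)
Your proposal is correct and matches the paper's approach exactly: the paper states Theorem~\ref{T: allc} as the combination of Proposition~\ref{P: cnr} (non-reduced case) and Lemma~\ref{L : red} (reduced case), which is precisely the case split and bookkeeping you carry out, including the observation that case (b) forces $M$ to be reduced since $p^{\omega+n}M=0$ kills any divisible subgroup. No gaps.
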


We use our results to bound the number of non-isomorphic countable $\aleph_0$-strongly homogeneous  $p$-groups for a complete first-order theory. The result shows that  the main result of \cite{lo} can be extended to a non-elementary setting. We will use frequently that saying that the $n$-th Ulm invariant is of some finite size or $\omega$ is expressible in first-order logic for $n \in \mathbb{N}$.

\begin{lemma}
If $T$ is a complete first-order theory, then there are at most countably many non-isomorphic countable $\aleph_0$-strongly homogeneous $p$-groups\end{lemma}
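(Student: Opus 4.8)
The plan is to run the classification of countable $\aleph_0$-strongly homogeneous $p$-groups in Theorem~\ref{T: allc} and to observe that, once $T$ is fixed, only countably many of the numerical parameters appearing there can occur. Here and below ``a countable $\aleph_0$-strongly homogeneous $p$-group'' is tacitly required to be a model of $T$; this qualification is essential, since by Corollary~\ref{C : noniso} there are $2^{\aleph_0}$ such groups overall. If $T$ has no such model we are done, so assume it has one; then $T \supseteq \Th(\Ab)$ and $T$ has a $p$-group model, and --- using that for each fixed $m\in\mathbb{N}$ the assertion ``the $m$-th Ulm invariant equals $v$'' (with $v$ a fixed natural number) resp.\ ``the $m$-th Ulm invariant is infinite'' is expressible in first-order logic --- the theory $T$ determines every finite Ulm invariant $u_m$, $m\in\mathbb{N}$: each equals the finite value prescribed by $T$, or else $\aleph_0$, the only infinite value available to a countable group.

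I would then split along the dichotomy of Theorem~\ref{T: allc}. In case~(a), $M \cong \bigoplus_{n=1}^\infty \bbZ(p^n)^{(\alpha_n)} \oplus \bbZ(p^\infty)^{(\alpha_\infty)}$ with all $\alpha_i$ countable. Since $u_m(M) = \alpha_{m+1}$ for every $m\in\mathbb{N}$ (the divisible summand contributes nothing to the finite Ulm invariants), the sequence $(\alpha_n)_{n\ge1}$ is completely determined by $T$ via the previous paragraph. The only remaining parameter, $\alpha_\infty$, is the rank of the maximal divisible subgroup of $M$ and hence an isomorphism invariant; it ranges over the countable set $\{0,1,2,\dots\}\cup\{\aleph_0\}$. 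So $T$ has at most countably many case-(a) models up to isomorphism.

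In case~(b), $M$ is a countable $p$-group for which there is $n\ge1$ with $p^{\omega+n}M = 0$ and $p^\omega M[p] = p^{\omega+(n-1)}M[p]\neq 0$. From $p^{\omega+n}M=0$ the maximal divisible subgroup $D$ of $M$ vanishes (as $D = p^nD \le p^{\omega+n}M = 0$), so $M$ is reduced and, by Ulm's Theorem, is determined up to isomorphism by its transfinite sequence of Ulm invariants. The finite part $(u_m)_{m\in\mathbb{N}}$ is fixed by $T$; by Lemma~\ref{L : red} the infinite part has a single nonzero entry, namely $u_{\omega+(n-1)}$ for the unique $n\ge1$ occurring in~(b), with $u_{\omega+i}=0$ for all $i\ge0$, $i\neq n-1$, and $u_\alpha=0$ for all $\alpha\ge\omega+n$. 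Thus the isomorphism type of $M$ is determined by the pair $\bigl(n,\,u_{\omega+(n-1)}\bigr)\in\mathbb{Z}_{\ge1}\times(\mathbb{Z}_{\ge1}\cup\{\aleph_0\})$, which ranges over a countable set. So $T$ has at most countably many case-(b) models up to isomorphism, and adding the two cases yields the claimed countable bound.

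The argument is essentially bookkeeping once its two genuine inputs are in hand: Theorem~\ref{T: allc} for the shape of the groups, and --- flagged in the sentence preceding the statement --- the first-order expressibility of each finite Ulm invariant up to ``finite value $v$'' versus ``infinite''. I expect this second point to be the only real obstacle: it is precisely what annihilates the a~priori uncountable families of parameters (the whole sequences $(\alpha_n)_n$ in case~(a), $(u_m)_m$ in case~(b)), collapsing them to the ``one option, or $\aleph_0$, per coordinate'' data prescribed by $T$ and leaving free only a single height-$\omega$ datum --- the divisible rank in case~(a), the pair $(n,u_{\omega+(n-1)})$ in case~(b) --- each carrying only countably many possibilities. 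The one spot demanding minor care is the appeal to Ulm's Theorem, which requires $M$ reduced; this is exactly what $p^{\omega+n}M=0$ provides.
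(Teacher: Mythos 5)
Your proof is correct and follows essentially the same route as the paper's: split along the dichotomy of Theorem~\ref{T: allc}, use the first-order expressibility of the finite Ulm invariants to see that $T$ fixes them, and observe that the only remaining data ($\alpha_\infty$ in case (a), the pair $(n,u_{\omega+(n-1)})$ in case (b)) range over countable sets. Your added checks (that the groups are tacitly models of $T$, and that $p^{\omega+n}M=0$ forces $M$ reduced so Ulm's Theorem applies) are correct refinements of the same argument.
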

\begin{proof}
We first bound the number of models of $T$ satisfying condition (a) of Theorem \ref{T: allc}. Every such group is of the form  $\bigoplus_{n =1}^\infty \bbZ (p^n)^{(\alpha_n)} \oplus \bbZ (p^\infty)^{(\alpha_\infty)}$ such that $\alpha_n$ is countable (possibly finite) for every $n \in \mathbb{Z}_{>0} \cup \{\infty\}$. The  $(n-1)$-th Ulm  invariant determines the $\alpha_n$ for $n \in \mathbb{Z}_{>0}$ and these are determined by the first-order theory. Therefore there is only one choice: the value of $\alpha_\infty$. Hence there are at most countably many  $\aleph_0$-strongly homogeneous $p$-groups satisfying condition (a). 

We bound the number of models of $T$ satisfying condition (b) of Theorem \ref{T: allc}. Every such group is determined by the finite Ulm invariants,  the $n$ such that $p^{\omega + n}M = 0$ and the dimension of the $\omega + (n-1)$-st Ulm invariant. Since the finite Ulm invariants are determined by the first-order theory and the other two choices have countably many options, there are at most countably many  $\aleph_0$-strongly homogeneous $p$-groups satisfying condition (b). 
\end{proof}

We turn to uncountable cardinals. The next result due to Baer, which can be found in \cite[\S 10.1.3]{fuchs}, will be useful. If $M$ is a reduced $p$-group and $p^\omega M = 0$, then for every $a \in M$ there is $M_a$ a finite direct summand of $M$ such that $a \in M_a$. 

\begin{proposition}\label{P: unc}
Assume $M$ is a reduced $p$-group. If $p^\omega M = 0$, then $M$ is $\aleph_0$-strongly homogeneous. 
\end{proposition}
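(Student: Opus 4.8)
The plan is to reduce the statement to the finite-parameter case handled by Fact~\ref{F: kapla} (Kaplansky's Exercise 38 / Theorem 14), using Baer's theorem that a reduced $p$-group $M$ with $p^\omega M = 0$ has the property that every element lies in a finite direct summand. The key observation is that in such an $M$, for any finite subgroup $B$ there is a finite direct summand $M_B$ of $M$ containing $B$: indeed, writing $B = \langle b_1, \ldots, b_k\rangle$, apply Baer's result to each $b_i$ to get finite direct summands containing them, and then a standard argument produces a single finite direct summand $M_B$ with $B \le M_B$ (a finite subgroup of a direct sum of cyclic groups is contained in a finite sub-direct-sum that is a direct summand). So first I would establish this lemma about finite subgroups sitting inside finite direct summands.

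Next, given a partial pure isomorphism $f_0 \colon (M,B) \to (M,B')$ with $B, B'$ finite, I would choose finite direct summands $M_1 \supseteq B$ and $M_1' \supseteq B'$, write $M = M_1 \oplus L_1 = M_1' \oplus L_1'$, and observe that since $M_1, M_1'$ are finite reduced $p$-groups and their complements $L_1, L_1'$ both have $p^\omega = 0$, the complements must be isomorphic once we know $M_1$ and $M_1'$ can be enlarged compatibly. Actually the cleaner route: enlarge to a common finite direct summand $N$ containing both $B$ and $B'$ (take $N$ a finite direct summand containing $B \cup B'$), write $M = N \oplus L$. Since the partial pure isomorphism $f_0$ is between finite subgroups of the finite reduced $p$-group $N$, and $N$ (being finite) is trivially a countable reduced $p$-group, Fact~\ref{F: kapla} yields an automorphism $g$ of $N$ extending $f_0$. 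Then $(g \oplus 1_L, f_0)$ is an automorphism of $M$ extending $f_0$, as required.

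The main obstacle I anticipate is the lemma that a finite subgroup of a reduced $p$-group with $p^\omega M = 0$ sits inside a \emph{finite direct summand}: Baer's theorem as quoted gives this for a single element $a$, producing $M_a$, but one must combine finitely many such $M_{a_i}$ into one. The subtlety is that $M_{a_1} + M_{a_2}$ need not be a direct summand and need not even be a direct sum. The way around this is to use that $M_{a_1}$ is a direct summand, write $M = M_{a_1} \oplus M'$, decompose $a_2 = a_2' + a_2''$ accordingly, apply Baer inside $M'$ to $a_2''$ to get a finite direct summand $M_{a_2''} \le M'$, and then $M_{a_1} \oplus M_{a_2''}$ is a finite direct summand of $M$ containing both $a_1$ and $a_2$; iterate. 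Once this lemma is in place, the rest is a short assembly. I should also note the degenerate case where $B$ or $B'$ is trivial, which is immediate, and the case $M$ finite, which is also immediate from Fact~\ref{F: kapla} directly.
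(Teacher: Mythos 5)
Your proof is correct and follows essentially the same route as the paper: the same iterated use of Baer's result to place $B\cup B'$ inside a finite direct summand $N$ with $M=N\oplus L$ (including the same trick of decomposing the second element along $M=N_1\oplus L_1$ and applying Baer inside $L_1$), followed by extending $f_0$ on $N$ and taking the identity on $L$. The only divergence is the final citation: the paper invokes its Lemma~\ref{L : red} (case (a)) for the countable group $N$, whereas you go straight to Fact~\ref{F: kapla}; these amount to the same thing, but Fact~\ref{F: kapla} has the hypothesis that $f_0$ be \emph{height-preserving}, which you should verify in one line. It holds because $N$ is a direct summand (hence pure) in $M$, so $f_0\colon (N,B)\to(N,B')$ is still a partial pure isomorphism and heights of elements of $N$ computed in $N$ agree with those in $M$, and because $N$ is finite all these heights are finite and therefore pp-definable by the formulas $p^k\mid x$, hence preserved by any partial pure isomorphism. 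With that remark added, your argument is complete and matches the paper's.
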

\begin{proof}
Let $B,$ $B' \leq M$ be finite subgroups of parameters and $f_0 \colon (M,B) \to (M,B')$ be a partial pure isomorphism. 

\underline{Claim}: If $C$ is a finite subset of $M$, then there is $N$ a finite direct summand of $M$ such that $C \subseteq N$. 

\underline{Proof of Claim:} We do the case when $C = \{ d, e  \}$ as the general case is similar. Applying the result mentioned before the proposition there are $N_1, L_1$ such that $d \in N_1$, $N_1$ is finite and $M = N_1 \oplus L_1$. Then $e = e_1 + e_2$ for $e_1 \in N_1$ and $e_2 \in L_1$. It is clear that $p^\omega L_1 = 0$, so applying the result mentioned before the proposition a second time there are $N_2, L_2$ such that $e_2 \in N_2$, $N_2$ is finite, and $L_1 = N_2 \oplus L_2$. It is straightforward to see that $N =N_1 \oplus N_2$ is as needed.$\dagger_{\text{Claim}}$

Using the above claim we obtain $N, L$ such that $B \cup B' \subseteq N$, $N$ is finite and $M = N \oplus L$. Since $N$ is a direct summand of $M$, $f_0 \colon (N,B) \to (N,B')$ is a partial pure isomorphism and $p^\omega N = 0$. Since $N$ is countable there is an automorphism $(f, f_0)\colon (N,B) \to (N,B')$ by Lemma \ref{L : red}. Let $h:= 1_L : L \cong L$. It is clear that $(f \oplus h, f_0): (M,B) \to (M,B')$ is as required. \end{proof}

We extend Corollary \ref{C : noniso}.

\begin{corollary}\label{C: non}
There are $2^{\lambda}$ non-isomorphic $\aleph_0$-strongly homogeneous $p$-groups of cardinality $\lambda$. 
\end{corollary}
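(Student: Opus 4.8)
The plan is to reduce the statement to a known abundance result for separable $p$-groups together with Proposition~\ref{P: unc}. First I would record the elementary observation that a $p$-group $M$ satisfies $p^\omega M = 0$ exactly when it has no elements of infinite height, and that any such $M$ is automatically reduced: a nonzero divisible subgroup $D \leq M$ would satisfy $D \subseteq \bigcap_n p^n M = p^\omega M = 0$. Hence the hypotheses of Proposition~\ref{P: unc} are met, and every $p$-group $M$ with $p^\omega M = 0$ is $\aleph_0$-strongly homogeneous; note also that Proposition~\ref{P: unc} carries no cardinality restriction, so this applies at size $\lambda$.

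It then remains to exhibit $2^\lambda$ pairwise non-isomorphic $p$-groups of cardinality $\lambda$ with $p^\omega M = 0$, that is, $2^\lambda$ non-isomorphic separable $p$-groups of size $\lambda$. For uncountable $\lambda$ I would invoke Shelah's construction in \cite{sh74}, which produces precisely such a family; for $\lambda = \aleph_0$ the conclusion is already Corollary~\ref{C : noniso}. The matching upper bound $2^\lambda$ is immediate, since there are at most $2^\lambda$ isomorphism types of groups of cardinality $\lambda$. Combining the two paragraphs gives the corollary.

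The only points requiring attention are to confirm that the groups furnished by \cite{sh74} indeed have cardinality exactly $\lambda$ and are separable, both of which are explicit in that paper. I do not anticipate a genuine obstacle: all of the combinatorial difficulty is absorbed into the cited theorem, while the new input needed here — that separable $p$-groups are $\aleph_0$-strongly homogeneous — has already been established in Proposition~\ref{P: unc}.
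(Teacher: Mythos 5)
Your proposal is correct and follows essentially the same route as the paper: apply Proposition~\ref{P: unc} to conclude that separable $p$-groups (those with $p^\omega M = 0$, which are automatically reduced) are $\aleph_0$-strongly homogeneous, and then cite Shelah~\cite{sh74} for $2^\lambda$ non-isomorphic such groups of cardinality $\lambda$. The extra remarks you add (the reducedness observation, the trivial upper bound, and handling $\lambda=\aleph_0$ via Corollary~\ref{C : noniso}) are harmless elaborations of the same argument.
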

\begin{proof} It follows from Proposition \ref{P: unc} that every reduced $p$-group without elements of infinite height is $\aleph_0$-strongly homogeneous. Then the result follows from \cite[\S 1]{sh74}.
\end{proof}

%\begin{remark}
%For uncountable groups we do not have any interesting results other than Corollary \ref{C : noniso} and that if $M$ is a countable $\aleph_0$-strongly homogeneous $p$-group then $M^{(\lambda)}$ is $\aleph_0$-strongly homogeneous. 
%\end{remark}
We use a similar idea to that of Theorem \ref{main:t} to show that every $p$-group is a pure subgroup of a $\aleph_0$-strongly homogeneous $p$-group
\begin{lemma}\label{L :pures}
Every $p$-group is a pure subgroup of an $\aleph_0$-strongly homogeneous $p$-group.
\end{lemma}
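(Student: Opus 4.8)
The plan is to mimic the construction used in Lemma~\ref{L: ess} and Theorem~\ref{main:t}, but carried out for a $p$-group $M$ of arbitrary cardinality $\lambda$. First I would write $M$ as the union of a continuous increasing chain $\{B_\alpha : \alpha < \lambda\}$ of subgroups of size $<\lambda$ (when $\lambda$ is infinite) or simply use the finite-subgroup filtration when $M$ is countable, so that the countable case is already handled by Lemma~\ref{L: ess} together with the fact that a direct sum of indecomposable pure injective $p$-groups is $\aleph_0$-strongly homogeneous by Lemma~\ref{L: countably homo}. For the general case, the idea is to build a pure, in fact essential, monomorphism $h\colon M \to L$ where $L$ is a direct limit (union) of pure injective hulls $H(M,B_\alpha)$ along the chain, constructed exactly as in Lemma~\ref{L: ess}: at each step use the universal property of the pure injective hull (Fact~\ref{F:pi hull}.(2)) to get pure monomorphisms $e_\alpha\colon H(M,B_\alpha)\to H(M,B_{\alpha+1})$ compatible with the inclusions, take the directed colimit $L=\bigcup_\alpha \ell_\alpha[H(M,B_\alpha)]$, and verify as before that the induced map $h\colon M\to L$ is a pure essential monomorphism.

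The second ingredient is to arrange that the target $L$ is $\aleph_0$-strongly homogeneous. By Theorem~\ref{T:main} each $H(M,B_\alpha)$ with $B_\alpha$ \emph{finite} is a finite direct sum of copies of $\bbZ(p^n)$, $n\in\mathbb{Z}_{>0}\cup\{\infty\}$, hence a direct sum of indecomposable pure injective $p$-groups; the telescoping decomposition argument at the end of the proof of Lemma~\ref{L: ess} (writing $H'_{\alpha+1}=H'_\alpha\oplus L_{\alpha+1}$ using $\Sigma$-pure injectivity) shows that $L$ itself is a direct sum of indecomposable pure injective $p$-groups. Then Lemma~\ref{L: countably homo} applies verbatim and $L$ is $\aleph_0$-strongly homogeneous. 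So the cleanest route is to use the filtration by \emph{finite} subgroups even in the uncountable case: every $p$-group $M$ is the directed union of its finite subgroups $\{B_i : i\in I\}$ directed by inclusion, and one runs the same colimit construction over this directed poset. The colimit $L$ will have cardinality $\le |M|+\aleph_0$, and $M$ embeds into it as a pure (indeed essential, hence elementary by Proposition~\ref{P:elem}) subgroup.

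The step I expect to be the main obstacle is making the directed-colimit construction go through over an arbitrary directed index set of finite subgroups rather than a countable chain: in Lemma~\ref{L: ess} the chain $B_0\subseteq B_1\subseteq\cdots$ is linearly ordered, which makes the inductive choice of the $e_n$ and the verification of compatibility $e_{n,m}$ straightforward, whereas for a general directed poset one must choose the connecting maps $e_{i,j}\colon H(M,B_i)\to H(M,B_j)$ coherently (so that $e_{j,k}\circ e_{i,j}=e_{i,k}$) — this requires either a careful transfinite recursion along a well-ordering of $I$ or an appeal to the functoriality/uniqueness part of the pure injective hull (Fact~\ref{F:pi hull}.(3)) to rigidify the choices. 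Once coherence is secured, purity of the colan's legs $\ell_i$ and the essentiality argument for $h\colon M\to L$ transfer mutatis mutandis from Lemma~\ref{L: ess}, and the decomposition of $L$ as a direct sum of indecomposables follows by the same $\Sigma$-pure injectivity bookkeeping. I would therefore structure the proof as: (i) reduce to the filtration by finite subgroups; (ii) build the coherent system of pure injective hulls and take the colimit $L$; (iii) check $M\hookrightarrow_p L$ essentially; (iv) decompose $L$ into indecomposable pure injectives via Theorem~\ref{T:main}; (v) conclude $\aleph_0$-strong homogeneity of $L$ from Lemma~\ref{L: countably homo}.
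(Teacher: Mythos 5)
Your plan works only in the countable case, which is exactly Lemma \ref{L: ess} plus Lemma \ref{L: countably homo}; for the lemma as stated (arbitrary $p$-groups) the route breaks down, and not merely at the coherence step you flag. First, the coherence problem is genuine and cannot be repaired by Fact \ref{F:pi hull}.(3): the extension in Fact \ref{F:pi hull}.(2) is not unique and the pure injective hull is not functorial, so there is no canonical way to choose the connecting maps $e_{i,j}$ over the (non-linear, uncountable) directed poset of finite subgroups so that $e_{j,k}\circ e_{i,j}=e_{i,k}$. But the deeper problem is that the conclusion your construction is aiming for is false: it is not true that every $p$-group purely embeds into a direct sum of copies of $\bbZ(p^n)$ and $\bbZ(p^\infty)$. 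Take $M=\overline{B}$, the torsion completion of $B=\bigoplus_{n}\bbZ(p^n)$, which is reduced with $p^\omega \overline{B}=0$. If $f\colon \overline{B}\to C\oplus D$ were a pure monomorphism with $C$ a direct sum of cyclic $p$-groups and $D$ divisible, then any $x$ with $f(x)\in D$ would be divisible by every $p^k$ in the codomain, hence (by purity) in $\overline{B}$, forcing $x=0$; so the projection to $C$ is injective on $\overline{B}$. By Kulikov's theorem every subgroup of a direct sum of cyclic $p$-groups is again a direct sum of cyclics, so $\overline{B}$ would be $\Sigma$-cyclic, which an unbounded torsion-complete group is not. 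Hence Lemma \ref{L: countably homo} (which needs the target to be a direct sum of indecomposable pure injectives) cannot drive the general statement, and steps (ii)--(v) of your outline cannot all succeed; replacing the finite filtration by larger subgroups does not help either, since Theorem \ref{T:main} is specific to finite parameter subgroups.

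The paper's proof is different and avoids this entirely: fix $\lambda\geq |M|$ with $\lambda^{\aleph_0}=\lambda$, let $T=\Th(\bigoplus_{n=1}^\infty \bbZ(p^n)^{(\aleph_0)}\oplus \bbZ(p^\infty)^{(\aleph_0)})$, which is $\lambda$-stable, and take a saturated model $N\models T$ of cardinality $\lambda$. The target is $U=t_p(N)$: it is $\aleph_0$-strongly homogeneous because a partial pure isomorphism between finite subgroups of $U$ extends to an automorphism of the saturated (hence strongly homogeneous) model $N$, and automorphisms preserve torsion, so they restrict to $U$; note $U$ is in general \emph{not} a direct sum of indecomposable pure injective $p$-groups (it contains torsion-complete pieces), which is precisely why this target succeeds where yours cannot. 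The pure embedding of $M$ is obtained as in Theorem \ref{main:t}: $M\oplus\bigoplus_{n=1}^\infty \bbZ(p^n)^{(\aleph_0)}\oplus \bbZ(p^\infty)^{(\aleph_0)}$ is a model of $T$ of cardinality at most $\lambda$, hence elementarily embeds into $N$ by saturation, and since $M$ is $p$-torsion its image lands in $U\leq_p N$. If you want to salvage your approach, you should restrict it to the countable case (where it is correct and already in the paper as Corollary \ref{C: sum-st}) and switch to a saturation or stability argument for the uncountable case.
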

\begin{proof}
Let $M$ be a $p$-group and $\lambda \geq |M|$ be such that $\lambda^{\aleph_0}=\lambda$. Let $T = \Th(\bigoplus_{n =1}^\infty \bbZ (p^n)^{(\aleph_0)} \oplus \bbZ (p^\infty)^{(\aleph_0)})$, then $T$ is $\lambda$-stable by \cite[2.1]{ziegler}. Let $N$ be a saturated model of $T$ of cardinality $\lambda$, this exists by \cite[Theorem III.3.12]{shbook}. Let $U=t_p(N)$, it is easy to show that $U$ is of cardinality $\lambda$ and that $U$ is $\aleph_0$-strongly homogeneous. Then doing a similar argument to that of Theorem \ref{main:t}  one can show that there is a pure monomorphism from $M$ into $U$. 
\end{proof}

We finish the paper by showing how $\aleph_0$-strongly homogeneous $p$-groups relate to the classical notion of a transitive group of Kaplansky \cite[\S 18]{kapla}. A $p$-group $M$ is \emph{transitive} if for every $n_1, n_2 \in M$, if $\mathbb{U}_M(n_1) = \mathbb{U}_M(n_2)$ then there is $f$ an automorphism of $M$ sending $n_1$ to $n_2$.\footnote{We do not assume that $M$ is reduced following \cite[2.1]{cago}.}

\begin{proposition}\label{P:h-t}
If $M$ is $\aleph_0$-strongly homogeneous $p$-group, then $M$ is transitive. 
\end{proposition}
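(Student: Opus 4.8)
The plan is to deduce transitivity as a special case of $\aleph_0$-strong homogeneity, applied to the cyclic subgroups generated by the two given elements. Fix $n_1, n_2 \in M$ with $\mathbb{U}_M(n_1) = \mathbb{U}_M(n_2)$ and put $B = \langle n_1 \rangle$ and $B' = \langle n_2 \rangle$; these are finite subgroups of $M$. Equality of Ulm sequences forces $n_1$ and $n_2$ to have the same order and to satisfy $h_M(p^i n_1) = h_M(p^i n_2)$ for all $i \geq 0$, so the assignment $z n_1 \mapsto z n_2$ is a well-defined group isomorphism $f_0 \colon B \to B'$ with $f_0(n_1) = n_2$ and $f_0[B] = B'$. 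Thus it is enough to show that $f_0 \colon (M,B) \to (M,B')$ is a partial pure isomorphism: then $\aleph_0$-strong homogeneity extends it to an automorphism $(f,f_0)$ of $M$, and $f(n_1) = f_0(n_1) = n_2$, which is precisely what transitivity asks for.

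It remains to verify that $f_0$ is a partial pure monomorphism. Since $B$ is cyclic with generator $n_1$, the substitution $u_{z n_1} \mapsto z u$ turns any pp-formula $\grf$ with free variables indexed by $B$ into a pp-formula $\psi(u)$ in a single free variable with $M \models \grf(B)$ if and only if $M \models \psi(n_1)$; because $f_0(z n_1) = z n_2$, the analogous equivalence holds for $B'$ and $n_2$. Hence it suffices to prove that $n_1$ and $n_2$ satisfy the same pp-formulae in one free variable, i.e.\ that $\pp^+(n_1, M) = \pp^+(n_2, M)$. From the simplified normal form for pp-formulae (Equation~(\ref{Eq:conj})), the fact that in a $p$-group every formula $s \mid v$ is equivalent to one of the form $p^k \mid v$, and the fact that multiplication by an integer prime to $p$ is an automorphism of $M$, one sees that every one-variable pp-formula is equivalent, in every $p$-group, to a conjunction of formulae of the two forms $p^a \mid p^b v$ and $p^c v \doteq 0$.

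Finally, each of these basic formulae is decided by the Ulm sequence: $M \models p^a \mid p^b n$ if and only if $p^b n \in p^a M$, i.e.\ $h_M(p^b n) \geq a$, while $M \models p^c n \doteq 0$ if and only if the order of $n$ divides $p^c$ — in each case a condition depending only on $\mathbb{U}_M(n)$. Therefore $\mathbb{U}_M(n_1) = \mathbb{U}_M(n_2)$ gives $\pp^+(n_1,M) = \pp^+(n_2,M)$, so $f_0$ is a partial pure monomorphism and, being onto $B'$, a partial pure isomorphism; the argument closes. The step needing genuine care — the heart of the proof — is the normal-form reduction above together with the check that the resulting divisibility and annihilation conditions are exactly the data recorded by the Ulm sequence; the passage from cyclic parameter subgroups to a single free variable and the concluding use of $\aleph_0$-strong homogeneity are routine.
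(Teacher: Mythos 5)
Your proposal is correct and follows the paper's own route: reduce to the cyclic parameter subgroups $\langle n_1\rangle$, $\langle n_2\rangle$, use the simplified pp normal form (Equation~(\ref{Eq:conj})) in a $p$-group to see that equal Ulm sequences force $zn_1 \mapsto zn_2$ to be a partial pure isomorphism, and then invoke $\aleph_0$-strong homogeneity. The paper compresses the normal-form verification into ``clear by Equation 2,'' and your write-up simply supplies those details.
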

\begin{proof}
Let $n_1, n_2 \in M$ and $\mathbb{U}_M(n_1) = \mathbb{U}_M(n_2)$. It is clear that $f_0: (M, \langle n_1 \rangle) \to (M, \langle n_2 \rangle)$ sending $n_1$ to $n_2$ is a partial pure isomorphism by Equation 2. Then there is $f$ an automorphism of $M$ sending $n_1$ to $n_2$. 
\end{proof}

The converse of the above result fails. 

\begin{example} Let $M= H_{\omega +1} \oplus \bbZ (p^\infty)$. Since $M$ is countable, $M$ is transitive by \cite[\S 18]{kapla}. $M$ is not $\aleph_0$-strongly homogeneous by Proposition \ref{P: cnr}.
\end{example}

Observe that the previous example also shows that $\aleph_0$-strongly homogeneous $p$-groups are not closed under direct sums as both  $H_{\omega +1}$ and  $\bbZ (p^\infty)$ are  $\aleph_0$-strongly homogeneous.

The next result, which extends \cite[2.2]{cago}, follows from Proposition \ref{P:h-t} and Lemma \ref{L :pures}.

\begin{corollary}
Every $p$-group is a pure subgroup of a transitive $p$-group.
\end{corollary}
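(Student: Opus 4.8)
The plan is to obtain the statement as an immediate consequence of the two results that precede it, with essentially no new work. First I would invoke Lemma~\ref{L :pures}: given an arbitrary $p$-group $M$, it supplies an $\aleph_0$-strongly homogeneous $p$-group $U$ together with a pure monomorphism $M \to U$; identifying $M$ with its image, this says $M \leq_p U$. Next I would apply Proposition~\ref{P:h-t}, which asserts that every $\aleph_0$-strongly homogeneous $p$-group is transitive, to conclude that $U$ is transitive. Chaining the two gives exactly the claim: $M$ is a pure subgroup of the transitive $p$-group $U$.

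There is no real obstacle here, since both ingredients are already in hand; the only point deserving a word of care is the meaning of ``pure subgroup'' in the conclusion, namely $\leq_p$ in Pr\"ufer's sense ($zU \cap M = zM$ for all $z \in \bbZ$), which is precisely what the pure monomorphism from Lemma~\ref{L :pures} delivers. It is also worth recording that this strengthens \cite[2.2]{cago}: Lemma~\ref{L :pures} places no cardinality or structural restriction on $M$, so the transitive overgroup is available for every $p$-group whatsoever.

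If one instead wanted a proof that sidesteps the model-theoretic input behind Lemma~\ref{L :pures}, the difficulty would simply migrate there: one would have to build, for each $p$-group $M$, a transitive $p$-group containing it purely — for instance by a saturation-and-union-of-chain construction in the spirit of Theorem~\ref{T:T-universal torsion} and Theorem~\ref{main:t} — and then verify transitivity of the limit directly via Ulm sequences. But given the results already established in the excerpt, that detour is unnecessary, and I would present only the two-line deduction above.
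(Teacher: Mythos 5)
Your proof is correct and is exactly the paper's argument: the corollary is stated there as an immediate consequence of Lemma \ref{L :pures} (every $p$-group purely embeds into an $\aleph_0$-strongly homogeneous $p$-group) combined with Proposition \ref{P:h-t} ($\aleph_0$-strongly homogeneous implies transitive). Nothing further is needed.
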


%\begin{lemma}
%If $M$ is a $p$-group, then $t_p(PE(M))$ is transitive. In particular every pure injective $p$-group is transitive. 
%\end{lemma}
%\begin{proof}
%By \cite[Theorem 24]{kapla} it is enough to show that for every $a, b \in M$ 
%\end{proof} 

%\begin{proof}
%Let $n_1, n_2 \in M$ with $\mathbb{U}_M(n_1) = \mathbb{U}_M(n_2)$. It follows from Equation 2 that $f: (M, \langle n_1 \rangle) \to (M, \langle n_2 \rangle)$ sending $n_1$ to $n_2$ is a partial isomorphism. Hence $M$ is transitive. 
%\end{proof}

\end{document}